\newcommand{\CX}{\C_{\times}^\infty(\wtd D)}
\newcommand{\CTE}[1]{C_T^1(0,T;{#1})}
\newcommand{\CoE}[1]{C_0^1(0,T;{#1})}
\newcommand{\C}{{\mathbb C}}
\newcommand{\R}{{\mathbb R}}
\newcommand{\D}{{\mathbb D}}
\newcommand{\E}{{\mathbb E}}
\newcommand{\mP}{\mathbb P}
\newcommand{\mE}{\mathbb E}
\newcommand{\mH}{\mathbb H}
\newcommand{\mL}{\mathbb L}
\newcommand{\mT}{\mathbb T}
\newcommand{\mV}{\mathbb V}
\newcommand{\mW}{\mathbb W}
\newcommand{\mY}{\mathbb Y}
\newcommand{\inpro}[2]{\left\langle{#1},{#2}\right\rangle}
\newcommand{\norm}[2]{\|{#1}\|_{#2}}
\newcommand{\snorm}[2]{\left|{#1}\right|_{#2}}
\newcommand{\lam}{\lambda}
\newcommand{\sig}{\sigma}
\newcommand{\vecH}{\boldsymbol{H}}
\newcommand{\vecM}{\boldsymbol{M}}
\newcommand{\vecP}{\boldsymbol{P}}
\newcommand{\veca}{\boldsymbol{a}}
\newcommand{\vecb}{\boldsymbol{b}}
\newcommand{\vecc}{\boldsymbol{c}}
\newcommand{\vece}{\boldsymbol{e}}
\newcommand{\vecg}{\boldsymbol{g}}
\newcommand{\vecm}{\boldsymbol{m}}
\newcommand{\vecn}{\boldsymbol{n}}
\newcommand{\vecu}{\boldsymbol{u}}
\newcommand{\vecv}{\boldsymbol{v}}
\newcommand{\vecw}{\boldsymbol{w}}
\newcommand{\vecx}{\boldsymbol{x}}
\newcommand{\vectau}{\boldsymbol{\tau}}
\newcommand{\vecvarphi}{\boldsymbol{\varphi}}
\newcommand{\vecpsi}{\boldsymbol{\psi}}
\newcommand{\vecxi}{\boldsymbol{\xi}}
\newcommand{\veczeta}{\boldsymbol{\zeta}}
\DeclareMathOperator{\dive}{{div \/}}
\DeclareMathOperator{\curl}{{curl \/}}
\DeclareMathOperator*{\esssup}{ess\,sup \/}
\newcommand{\cD}{{\mathcal D}}
\newcommand{\cE}{{\mathcal E}}
\newcommand{\cF}{{\mathcal F}}
\newcommand{\cI}{{\mathcal I}}
\newcommand{\cL}{{\mathcal L}}
\newcommand{\cM}{{\mathcal M}}
\newcommand{\cN}{{\mathcal N}}
\newcommand{\goto}{\rightarrow}
\newcommand{\gotoo}{\longrightarrow}
\newcommand{\p}{\partial}
\newcommand{\wtd}{\widetilde}
\newcommand{\ol}{\overline}
\newcommand{\pa}{\partial}
\newcommand{\ds}{\, ds}
\newcommand{\dt}{\, dt}
\newcommand{\dW}{\, dW}
\newcommand{\dvx}{\, d\vecx}
\newcommand{\nn}{\nonumber}
\numberwithin{equation}{section}
\newtheorem{theorem}{Theorem}[section]
\newtheorem{lemma}[theorem]{Lemma}
\newtheorem{proposition}[theorem]{Proposition}
\newtheorem{definition}[theorem]{Definition}
\title[FEM for stochastic MLLG equation]{A FINITE ELEMENT APPROXIMATION FOR 
THE STOCHASTIC Maxwell--LANDAU--LIFSHITZ--GILBERT SYSTEM}
\author{Beniamin Goldys}
\address{School of Mathematics and Statistics,
         The University of Sydney,
         Sydney 2006, Australia}
\email{beniamin.goldys@sydney.edu.au}
\author{Kim-Ngan Le}
\address{School of Mathematics and Statistics,
         The University of New South Wales,
         Sydney 2052, Australia}
\email{n.le-kim@unsw.edu.au}
\author{Thanh Tran}
\address{School of Mathematics and Statistics,
         The University of New South Wales,
         Sydney 2052, Australia}
\email{thanh.tran@unsw.edu.au}
\subjclass[2000]{Primary 35R60, 60H15, 65L60, 65L20; Secondary 82D45}
\keywords{stochastic partial differential equation,
Landau--Lifshitz--Gilbert equation, Maxwell equation, 
finite element, ferromagnetism}
\date{\today}
\newtheorem{algorithm}{Algorithm}[section]
\begin{document}
\begin{abstract}
The stochastic Landau--Lifshitz--Gilbert (LLG) equation coupled with the Maxwell
equations (the so called stochastic MLLG system) describes the creation of
domain walls and vortices (fundamental objects for the novel nanostructured
magnetic memories).
We first reformulate the stochastic LLG equation into an equation 
with time-differentiable solutions.
We then propose a convergent $\theta$-linear scheme to approximate the
solutions
of the reformulated system. As a consequence, we prove convergence of the
approximate solutions, with no or minor conditions on time and space steps
(depending on the value of $\theta$).
Hence, we prove the existence of weak martingale solutions of the stochastic 
MLLG system. Numerical results are presented to show applicability of the method.
\end{abstract}
\maketitle
%----------------------------------------------------
\section{Introduction}
The Maxwell--Landau--Lifshitz--Gilbert (MLLG) system
 describes the electromagnetic behaviour of a 
ferromagnetic material~\cite{Cimrak_survey}. 
For simplicity, we suppose that there is a bounded cavity $\wtd D\subset\R^3$
(with perfectly conducting outer surface $\p\wtd D$) in which a ferromagnet
$D$ is embedded, and $\wtd D\backslash \bar{D}$ is an
isotropic material. %~\cite{MonkVacus01}. 
Letting $D_T:= (0,T)\times D$ and $\wtd D_T:= (0,T)\times \wtd D$, the
magnetisation field $\vecM : D_T\goto \mathbb{S}^2$ (where $\mathbb{S}^2$ is
the unit sphere in $\R^3$) and the magnetic field $\vecH:\wtd D_T\goto
\mathbb{R}^3$  satisfy the quasi-static model of  the MLLG system:
\begin{align}
&\vecM_t
=
\lambda_1\vecM\times\vecH_{\text{eff}}
-
\lambda_2\vecM\times(\vecM\times\vecH_{\text{eff}})
\quad\text{ in } D_T,\label{E:LL}\\
&\mu_0\vecH_t
+
\nabla\times(\sig\nabla\times\vecH) 
=
-\mu_0\wtd\vecM_t
\quad\text{ in } \wtd D_T,\label{E:Max2}
\end{align}
in which $\lambda_1\not=0$, $\lambda_2 >0$,  and $\mu_0> 0$ are
constants. Here, the inverse of the conductivity $\sigma$ is a scalar positive bounded function on $\wtd D$ satisfying
$\sigma(\vecx) = \sig_D>0$ for all $\vecx\in D$~\cite{Monk03}.
Vector function $\vecH_{\text{eff}}$ is the effective field  and
$\wtd\vecM:\wtd D_T\goto \R^3$ is the zero extension of $\vecM$ onto $\wtd D_T$, i.e.,
\begin{equation*}
\wtd\vecM(t,\vecx)
=
\begin{cases}
\vecM(t,\vecx),&\quad (t,\vecx)\in D_T,
\\
\hfill 0, &\quad (t,\vecx)\in \wtd D_T\setminus\ol{D}_T.
\end{cases}
\end{equation*}
The system \eqref{E:LL}--\eqref{E:Max2} is supplemented with the initial conditions
\begin{equation}\label{InCond}
\vecM(0,\cdot)=\vecM_0\text{ in }D\quad\text{and}\quad
\vecH(0,\cdot)=\vecH_0\text{ in }\wtd D,
\end{equation}
and the boundary conditions
\begin{equation}\label{Cond}
\partial_{\vecn_D}\vecM=0 
\text{ on } (0,T)\times\partial D\quad\text{and}\quad
(\nabla\times\vecH)\times \vecn_{\wtd D} =0 
\text{ on } (0,T)\times\partial \wtd D,
\end{equation}
where $\vecn_D$ and $\vecn_{\wtd D}$ are the unit outward normal vectors to 
$D$ and $\wtd D$, respectively. Here $\partial_{\vecn_D}$ denotes the normal
derivative.

It is highly significant to consider the stochastic MLLG system in order to
describe the creation of domain walls and vortices (fundamental objects for
the novel nanostructured magnetic memories)~\cite{Moser04}. 
We follow~\cite{BanBrzPro09,BrzGolJer12} to add a noise to the effective
field~$\vecH_{\text{eff}}$ so that the
stochastic version of the MLLG system takes the form
\begin{align}
&d\vecM
=
\big(\lambda_1 \vecM\times \vecH_{\text{eff}}
-
\lambda_2 \vecM\times(\vecM\times\vecH_{\text{eff}})\big)\dt
+
(\vecM\times \vecg)\circ dW(t)
\,\text{ in } D_T,\label{E:1.1}\\
&\mu_0 \,d\vecH
+
\nabla\times(\sig\nabla\times\vecH) \dt
=
-\mu_0 \, d\wtd\vecM
\,\text{ in } \wtd D_T,\label{E:Maxwell2}
\end{align}
where $\vecg : D\goto\R^3$ is a given bounded function, and $W$ is a 
one-dimensional Wiener process.
Here $\circ dW(t)$ stands for the Stratonovich
differential.
We assume without loss of generality that
(see~\cite{BrzGolJer12})
\begin{equation}\label{equ:g 1}
\snorm{\vecg(\vecx)}{}
= 1, \quad \vecx\in D
\end{equation}
For simplicity the effective field $\vecH_{\text{eff}}$ is taken 
to be $\vecH_{\text{eff}}=\Delta\vecM+\vecH$.

In the deterministic case, i.e.~\eqref{E:LL}--\eqref{E:Max2}, 
the existence and uniqueness of a {\em local} strong solution 
is shown by Cimr{\'a}k~\cite{CimExist07}. He also
proposes~\cite{CimError07} a finite element method to
approximate this local solution and provides error
estimation. Various results on the existence of global weak
solutions are proved in \cite{GuoSu97, GuoSu01, SuGuo98}. A more complete
list can be found in~\cite{Cimrak_survey,GuoDing08,KruzikProhl06}.
It should be noted that apart from \cite{CimError07} where a
numerical scheme is suggested for a local solution,
other analyses are non-constructive, namely no
computational techniques are proposed for the solution. 

In~\cite{MonkVacus01},
the stability of a semidiscrete scheme to numerically 
solve~\eqref{E:LL}--\eqref{E:Max2} is
verified, but its convergence is not studied. 
Ba\v{n}as, Bartels and Prohl~\cite{BBP08}
propose an implicit {\em nonlinear} scheme to 
solve the MLLG system, and succeed in proving
that the finite element solution converges to a weak
{\em global} solution of the problem. 
A $\theta$-{\em linear} finite
element scheme is proposed
in~\cite{BanPagPra12,LMDT13,LeTra12}
to find a weak {\em global} solution 
to the MLLG system, and convergence of  the numerical 
solutions is proved with no 
condition imposed on time step and space step if $\theta\in(\frac{1}{2},1]$. 
It should be mentioned that the proofs of existence proposed
in~\cite{BBP08,BanPagPra12,LMDT13,LeTra12} are
constructive proofs, namely an approximate solution can be
computed.

In the stochastic case,
the Faedo--Galerkin method is used
in \cite{BrzGolJer12} to show the existence of
a weak martingale solution for the stochastic Landau--Lifshitz--Gilbert (LLG) equation~\eqref{E:1.1}.
Finite element schemes for this equation are studied
in~\cite{Alo14,BanBrzPro09,BNT13} which prove that the numerical
solutions converge to a weak martingale solution. 
It is noted that 
a {\em non-linear} scheme is proposed in~\cite{BanBrzPro09} and 
{\em linear} schemes are proposed in~\cite{Alo14,BNT13}. 

The full version of the stochastic Landau--Lifshitz equation coupled with the Maxwell's equations 
is studied firstly in~\cite[Section 5]{LiangThesis} where the existence of the weak martingale solution and its regularity 
are proved by using the Faedo-Galerkin approximation, the methods of compactness and Skorokhod's Theorem. 

To the best of our knowledge the numerical analysis of the system~\eqref{E:1.1}--\eqref{E:Maxwell2} is an open problem at present.
In this paper, 
we extend the {\em $\theta$-linear} finite element scheme developed
in~\cite{LeTra12} for the deterministic MLLG system to the stochastic case. 
%The existence and uniqueness of solutions to the stochastic MLLG
%system~\eqref{E:1.1}--\eqref{E:Maxwell2} have never been considered due to its
%mathematical complexity.
Since this scheme seeks to approximate the time derivative of the
magnetization $\vecM$, we adopt the technique in~\cite{BNT13} 
to reformulate system~\eqref{E:1.1}--\eqref{E:Maxwell2} into a system not
involving the Stratonovich differential~$\circ dW(t)$. 
Then the {\em $\theta$-linear} scheme mentioned above can
be applied. As a consequence,
we prove  the existence of weak martingale solutions to the stochastic MLLG
system.

The paper is organised as follows.
In Section~\ref{sec:not} we define the notations to be used, and recall some
technical results. In Section~\ref{sec:mai res} we
define weak martingale solutions to~\eqref{E:1.1}--\eqref{E:Maxwell2}
and state our main result.
Details of the reformulation of~\eqref{E:1.1} are presented in
Section~\ref{sec:equ eqn}. We also show in this section how
a weak solution to~\eqref{E:1.1}--\eqref{E:Maxwell2} can be obtained from a
weak solution of the reformulated system.
In Section~\ref{sec:fin ele}, we introduce our finite element
scheme and present a proof of the convergence of finite
element solutions to a weak solution of the reformulated
system.
Section~\ref{sec:pro} is devoted to the proof of
the main theorem. Our numerical experiments are
presented in Section~\ref{sec:num}.

Throughout this paper, $c$ denotes a generic constant which may take 
different values at different occurences.  
%-----------------------------------------------------------------------------
\section{Notations and technical results}\label{sec:not}
%---------------------------
\subsection{Notations}
In this subsection, we introduce some function spaces and notations which are
used in the rest of this paper.

For any open set $U\subset\R^3$, the {\em curl} operator of a vector function
$\vecu=(u_1,u_2,u_3)$ defined on $U$ is denoted by
\[
\text{curl}\, \vecu
=
\nabla\times\vecu
:=
\big(
\frac{\partial u_2}{\partial x_3} - \frac{\partial u_3}{\partial x_2},
\frac{\partial u_1}{\partial x_3} - \frac{\partial u_3}{\partial x_1},
\frac{\partial u_2}{\partial x_1} - \frac{\partial u_1}{\partial x_2}
\big),
\]
if the partial derivatives exist.
The function spaces $\mH^1(U)$ and $\mH(\curl;U)$ are defined, respectively,
by
\begin{gather*}
\mH^1(U)
=
\left\{ \vecu\in\mL^2(U) : \frac{\p\vecu}{\p
x_i}\in \mL^2(U)\quad\text{for } i=1,2,3
\right\},\\
\mH(\curl; U)
=
\left\{
 \vecu\in\mL^2(U) : \nabla\times\vecu\in \mL^2(U)
\right\}.
\end{gather*}
Here, $\mL^2(U)$ is the usual space of Lebesgue square
integrable functions defined on $U$ and taking values in $\R^3$.
The inner product and norm in $\mL^2(U)$ are
denoted by $\inpro{\cdot}{\cdot}_{U}$ and
$\|\cdot\|_{U}$, respectively.
%-----------------------

For any vector functions $\vecu, \vecv, \vecw$,
we denote
\begin{equation}\label{equ:nab nab}
\begin{gathered}
\nabla\vecu\cdot\nabla\vecv
:=
\sum_{i=1}^3
\frac{\partial\vecu}{\partial x_i}
\cdot
\frac{\partial\vecv}{\partial x_i}, 
\\
\nabla\vecu\times\nabla\vecv
:=
\sum_{i=1}^3
\frac{\partial\vecu}{\partial x_i}
\times
\frac{\partial\vecv}{\partial x_i}, 
\\
\vecu\times\nabla\vecv
:=
\sum_{i=1}^3
\vecu
\times
\frac{\partial\vecv}{\partial x_i}, 
\\
\left(\vecu\times\nabla\vecv\right) \cdot \nabla\vecw
:=
\sum_{i=1}^3
\left(\vecu\times\frac{\partial\vecv}{\partial x_i}\right)
\cdot
\frac{\partial\vecw}{\partial x_i},
%\inpro{\vecu\times\nabla\vecv}{\nabla\vecw}_U
%:=
%\sum_{i=1}^3
%\inpro{\vecu\times\frac{\partial\vecv}{\partial x_i}}%
%{\frac{\partial\vecw}{\partial x_i}}_U.
\end{gathered}
\end{equation}
provided that the partial derivatives exist, at least in the weak sense.
We also denote 
\begin{align*}
\C^\infty(\wtd D)
&:=
\left\{
\vecu : \wtd D \to \R^3 \ | \ \vecu \text{ is infinitely differentiable}
\right\},
\\
\CX 
&:=
\left\{\vecu\in 
\mathbb C^{\infty}(\wtd D)\cap \C(\bar D)
\ | \
(\nabla\times\vecu)\times\vecn_D = 0
\text{ on } 
\partial D
\text{ and }
(\nabla\times\vecu)\times\vecn_{\wtd D} = 0
\text{ on } 
\partial\wtd D
\right\},
\\
\CTE{E}
&:=
\left\{
\vecu : [0,T] \to E \ | \ \vecu \text{ is continuously differentiable and } 
\vecu(T) = 0 \text{ in } E
\right\},
\\
\CoE{E}
&:=
\left\{
\vecu : [0,T] \to E \ | \ \vecu \text{ is continuously differentiable and } 
\vecu(0) = \vecu(T) = 0 \text{ in } E
\right\},
\end{align*}
for any $T>0$ and any normed vector space $E$.
%where $\vecn_D$ and $\vecn_{\wtd D}$ are the unit outward normal vector to
%$D$ and $\wtd D$, respectively, defined almost everywhere.

\subsection{Technical results}
In this subsection we recall some results from~\cite{BNT13}. They will
be used in the next section to reformulate~\eqref{wE:1.1} to a new form.
%------------

Assume that $\vecg\in\mL^{\infty}(D)$, and let
$G : \mL^2(D) \gotoo \mL^2(D)$ be defined by
\begin{equation}\label{equ:G}
G\vecu = \vecu\times\vecg \quad\forall\vecu\in\mL^2(D).
\end{equation}
Then the operator $G$ is bounded~\cite{BNT13}.
%-------------------
\begin{lemma}\label{lem:G2}
For any $s\in\R$ and $\vecu,\vecv\in\mL^2(D)$ there hold
\begin{align}
e^{sG}\vecu
&=
\vecu + (\sin s) G\vecu + (1-\cos s) G^2\vecu,
\label{equ:G8} \\
% e^{-sG} (e^{sG} \vecu)
% &= 
% \vecu,
% \label{equ:G12}\\
\left(e^{sG}\right)^*
&=
e^{-sG},
\label{equ:eadj}\\
e^{sG}G\vecu &= Ge^{sG}\vecu,
\label{equ:G9} \\
e^{sG}(\vecu\times\vecv)
&=
e^{sG}\vecu \times e^{sG}\vecv. 
\label{equ:G11}
\end{align}
\end{lemma}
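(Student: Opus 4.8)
The plan is to establish the four identities by first understanding the algebraic structure of the operator $G$, then deriving the exponential series formula \eqref{equ:G8}, from which \eqref{equ:eadj}, \eqref{equ:G9}, and \eqref{equ:G11} follow readily. The crucial preliminary observation is that, because $\snorm{\vecg(\vecx)}{} = 1$ pointwise, the operator $G$ acting on $\mathbb{L}^2(D)$ behaves pointwise like the cross product with a unit vector: for each $\vecx$, the linear map $\veca \mapsto \veca \times \vecg(\vecx)$ on $\R^3$ satisfies the identity $G_{\vecx}^3 = -G_{\vecx}$, where $G_{\vecx}\veca = \veca \times \vecg(\vecx)$. This is the classical Rodrigues relation: using $\veca\times(\vecb\times\vecc) = \vecb(\veca\cdot\vecc) - \vecc(\veca\cdot\vecb)$ one computes $G_{\vecx}^2\veca = (\veca\times\vecg)\times\vecg = \vecg(\veca\cdot\vecg) - \veca$, and applying $G_{\vecx}$ once more, using $\vecg\times\vecg = \veczero$, gives $G_{\vecx}^3\veca = -\veca\times\vecg = -G_{\vecx}\veca$. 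Hence $G^3 = -G$ as operators on $\mathbb{L}^2(D)$.

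Given $G^3 = -G$, identity \eqref{equ:G8} follows by summing the exponential series $e^{sG} = \sum_{k\ge 0} s^k G^k/k!$ and grouping terms according to the reduction $G^{2j} = (-1)^{j+1}G^2$ for $j\ge 1$ and $G^{2j+1} = (-1)^j G$ for $j\ge 0$. The odd-power terms collect to $(\sin s)\,G$ and the even-power terms (those with $k\ge 2$) collect to $(1-\cos s)\,G^2$, with the $k=0$ term giving the identity; boundedness of $G$ guarantees the series converges in operator norm so the rearrangement is legitimate. For \eqref{equ:eadj}, since $G^*\vecu = ?$ — one checks from the scalar triple product identity $\inpro{\veca\times\vecg}{\vecb}{} = \inpro{\vecg\times\vecb}{\veca}{} = -\inpro{\veca}{\vecb\times\vecg}{}$ pointwise and integrating, that $G^* = -G$, i.e.\ $G$ is skew-adjoint; then $(e^{sG})^* = e^{sG^*} = e^{-sG}$, or alternatively one reads it directly off \eqref{equ:G8} using $(G^2)^* = (G^*)^2 = G^2$. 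Identity \eqref{equ:G9} is immediate since $G$ commutes with every power of itself, hence with the series $e^{sG}$.

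The remaining identity \eqref{equ:G11}, that $e^{sG}$ distributes over the cross product, is the one requiring the most care, and it is where I expect the main (if modest) obstacle to lie. The natural route is again pointwise: for fixed $\vecx$, $e^{sG_{\vecx}}$ is the rotation of $\R^3$ about the axis $\vecg(\vecx)$ by angle $s$ (this is exactly the content of Rodrigues' rotation formula, which \eqref{equ:G8} reproduces), and rotations are orientation-preserving orthogonal transformations, so they satisfy $R(\veca)\times R(\vecb) = R(\veca\times\vecb)$ for all $R\in SO(3)$. One must verify $e^{sG_{\vecx}} \in SO(3)$: orthogonality follows from skew-symmetry of $G_{\vecx}$ (so $e^{sG_{\vecx}}(e^{sG_{\vecx}})^{\mathsf T} = e^{sG_{\vecx}}e^{-sG_{\vecx}} = I$), and $\det e^{sG_{\vecx}} = e^{s\,\mathrm{tr}\,G_{\vecx}} = e^0 = 1$ since $G_{\vecx}$ has zero trace; alternatively one notes $\det e^{sG_{\vecx}}$ is continuous in $s$, equals $1$ at $s=0$, and is never zero, hence is identically $1$. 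Then \eqref{equ:G11} holds at each $\vecx$ and therefore a.e., which is the claimed equality in $\mathbb{L}^2(D)$. If one prefers to avoid the geometric argument, \eqref{equ:G11} can instead be proved purely algebraically by differentiating both sides in $s$: both $\phi(s) := e^{sG}(\vecu\times\vecv)$ and $\psi(s) := e^{sG}\vecu \times e^{sG}\vecv$ agree at $s=0$, and one shows they satisfy the same linear ODE, namely $\phi'(s) = G\phi(s)$ while $\psi'(s) = (G e^{sG}\vecu)\times e^{sG}\vecv + e^{sG}\vecu \times (G e^{sG}\vecv)$, so it suffices to check the pointwise Leibniz-type identity $(\veca\times\vecg)\times\vecb + \veca\times(\vecb\times\vecg) = (\veca\times\vecb)\times\vecg$, which is a direct consequence of the Jacobi identity for the cross product. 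Uniqueness for the linear ODE then forces $\phi \equiv \psi$.
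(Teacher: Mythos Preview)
Your proof is correct and self-contained. Note, however, that the paper does not actually prove this lemma: it is stated as a recalled result from~\cite{BNT13} and no proof is given in the present paper. Your argument---establishing $G^3=-G$ from $|\vecg|=1$ via the vector triple product, deducing the Rodrigues formula~\eqref{equ:G8} by collapsing the exponential series, reading off skew-adjointness of $G$ for~\eqref{equ:eadj}, and handling~\eqref{equ:G11} either by the $SO(3)$ interpretation or by the ODE/Jacobi-identity argument---is exactly the standard route and matches what one finds in the cited reference. The Jacobi-identity verification you sketch for the Leibniz-type relation $(\veca\times\vecg)\times\vecb + \veca\times(\vecb\times\vecg) = (\veca\times\vecb)\times\vecg$ is correct.
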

%------------------
In the proof of the existence of weak solutions we also need the
following result for the operator~$e^{sG}$.

\begin{comment}
\begin{lemma}\label{lem:wtd C}
Assume that $\vecg\in\mH^2(D)$.
For any $\vecu\in \mH^1(D)$ and $\vecv\in
\mW_0^{1,\infty}(D)$ there hold
\begin{equation}\label{equ:C nab G}
\inpro{\nabla G\vecu}{\nabla\vecv}_D
+
\inpro{\nabla\vecu}{\nabla G\vecv}_D
=
-\inpro{C\vecu}{\vecv}_D
\end{equation}
and
\begin{equation}\label{equ:CG GC}
\inpro{\nabla\vecu}{\nabla G^2\vecv}_D
-
\inpro{\nabla G^2\vecu}{\nabla\vecv}_D
=
\inpro{GC\vecu}{\vecv}_D
+
\inpro{CG\vecu}{\vecv}_D,
\end{equation}
where
\[
C\vecu
=
\vecu\times\Delta\vecg
+ 2\sum_{i=1}^3
\frac{\pa\vecu}{\pa x_i}
\times
\frac{\pa\vecg}{\pa x_i}.
\]
\end{lemma}
\end{comment}
%---------------
\begin{lemma}\label{lem:eC wea}
Assume that $\vecg\in\mH^2(D)$. For any $s\in\R$,
$\vecu\in\mH^1(D)$ and $\vecv\in\mW_0^{1,\infty}(D)$, let 
 \[
 \wtd C(s,\vecv)
 =
  e^{-sG}
 \big(
 (\sin s) C + (1-\cos s)(GC+CG)
 \big)\vecv
 \]
with $C$ being defined by
\[
C\vecu
=
\vecu\times\Delta\vecg
+ 2\sum_{i=1}^3
\frac{\pa\vecu}{\pa x_i}
\times
\frac{\pa\vecg}{\pa x_i}.
\]
There holds
\begin{equation*}
\inpro{\wtd C(s,e^{-sG}\vecu)}{\vecv}_D
=
\inpro{\nabla\left(e^{-sG}\vecu\right)}{\nabla\vecv}_D
-
\inpro{\nabla \vecu}{\nabla\left(e^{sG}\vecv\right)}_D,
\end{equation*}

\end{lemma}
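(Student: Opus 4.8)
The key identity to establish is
\[
\inpro{\wtd C(s,e^{-sG}\vecu)}{\vecv}_D
=
\inpro{\nabla(e^{-sG}\vecu)}{\nabla\vecv}_D
-
\inpro{\nabla\vecu}{\nabla(e^{sG}\vecv)}_D .
\]
My approach is to first prove the two auxiliary bilinear identities for the generator $C$ that were stated (in the commented-out Lemma) and then propagate them through the series expansion of $e^{sG}$. Concretely, I would first record that for $\vecu\in\mH^1(D)$ and $\vecv\in\mW_0^{1,\infty}(D)$,
\begin{equation}\label{eq:plan-CnabG}
\inpro{\nabla G\vecu}{\nabla\vecv}_D + \inpro{\nabla\vecu}{\nabla G\vecv}_D = -\inpro{C\vecu}{\vecv}_D,
\end{equation}
which follows by differentiating $G\vecu=\vecu\times\vecg$ under $\p/\p x_i$ via the product rule, giving $\p_i(G\vecu)=(\p_i\vecu)\times\vecg+\vecu\times\p_i\vecg$, then summing $\inpro{\p_i(G\vecu)}{\p_i\vecv}_D+\inpro{\p_i\vecu}{\p_i(G\vecv)}_D$ over $i$; the ``diagonal'' terms $(\p_i\vecu)\times\vecg$ paired against $\p_i\vecv$ cancel against $\p_i\vecu$ paired against $(\p_i\vecv)\times\vecg$ using the scalar triple product identity $(\veca\times\vecb)\cdot\vecc=-(\veca\times\vecc)\cdot\vecb$, leaving exactly $-\sum_i\inpro{(\p_i\vecu)\times\p_i\vecg}{\vecv}_D - \inpro{\vecu\times\Delta\vecg}{\vecv}_D$ after an integration by parts on the $\vecu\times\p_i\vecg$ term (here the boundary term vanishes because $\vecv\in\mW_0^{1,\infty}(D)$), i.e.\ $-\inpro{C\vecu}{\vecv}_D$. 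Since $C$ is a first-order operator with coefficients in $\mL^\infty(D)$ (using $\vecg\in\mH^2(D)$ and Sobolev embedding in dimension $3$ for $\Delta\vecg$, or rather treating $C$ weakly), both sides make sense.

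**Propagating to $G^2$ and assembling.** Next I would apply \eqref{eq:plan-CnabG} twice to get the second identity
\begin{equation}\label{eq:plan-CG2}
\inpro{\nabla\vecu}{\nabla G^2\vecv}_D - \inpro{\nabla G^2\vecu}{\nabla\vecv}_D = \inpro{(GC+CG)\vecu}{\vecv}_D,
\end{equation}
by writing $\inpro{\nabla\vecu}{\nabla G^2\vecv}_D = -\inpro{\nabla G\vecu}{\nabla G\vecv}_D - \inpro{C\vecu}{G\vecv}_D$ (applying \eqref{eq:plan-CnabG} with $\vecv\mapsto G\vecv$) and similarly $\inpro{\nabla G^2\vecu}{\nabla\vecv}_D = -\inpro{\nabla G\vecu}{\nabla G\vecv}_D - \inpro{GC\vecu}{\vecv}_D$ (applying \eqref{eq:plan-CnabG} with $\vecu\mapsto G\vecu$, then using $G^*=-G$ from \eqref{equ:eadj} in the form $s=0$, or rather antisymmetry of $G$ as an operator on $\mL^2$), and subtracting; the symmetric terms $\inpro{\nabla G\vecu}{\nabla G\vecv}_D$ cancel and we are left with $-\inpro{C\vecu}{G\vecv}_D + \inpro{GC\vecu}{\vecv}_D = \inpro{GC\vecu}{\vecv}_D + \inpro{CG\vecu}{\vecv}_D$ since $-\inpro{C\vecu}{G\vecv}_D = \inpro{GC\vecu}{\vecv}_D$ uses $G^*=-G$ again. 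Then I expand the right-hand side of the target identity by Lemma~\ref{lem:G2}: write $e^{sG}\vecv = \vecv + (\sin s)G\vecv + (1-\cos s)G^2\vecv$ and $e^{-sG}\vecu = \vecu + (-\sin s)G\vecu + (1-\cos s)G^2\vecu$, substitute, and collect. The $\inpro{\nabla\vecu}{\nabla\vecv}_D$ terms cancel; grouping the coefficient of $\sin s$ gives (via \eqref{eq:plan-CnabG}) a term $-\inpro{C\vecu}{\vecv}_D$ up to the right placement of $e^{-sG}$, and grouping the coefficient of $1-\cos s$ gives (via \eqref{eq:plan-CnabG} and \eqref{eq:plan-CG2}) a $(GC+CG)$ term; the cross terms $\sin s\,(1-\cos s)$ involving $\nabla G$ against $\nabla G^2$ must be checked to cancel in pairs by the same antisymmetry. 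Finally I pull out the overall $e^{-sG}$ using $\inpro{e^{-sG}\,\cdot}{\vecv}_D = \inpro{\cdot}{e^{sG}\vecv}_D$ from \eqref{equ:eadj}, wait --- more carefully, one reorganizes so that the operator acting on $\vecv$ on the left is exactly $\wtd C(s,\cdot)$ by its definition $\wtd C(s,\vecv)=e^{-sG}\big((\sin s)C+(1-\cos s)(GC+CG)\big)\vecv$, matching the accumulated $(\sin s)C + (1-\cos s)(GC+CG)$ acting between $\vecu$ and $\vecv$.

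**Main obstacle.** The delicate point is bookkeeping the mixed cross-terms of order $\sin s\cdot(1-\cos s)$ and $(1-\cos s)^2$ in the expansion: these involve pairings like $\inpro{\nabla G\vecu}{\nabla G^2\vecv}_D$ and $\inpro{\nabla G^2\vecu}{\nabla G^2\vecv}_D$, and one must verify they cancel or reduce using \eqref{eq:plan-CnabG}--\eqref{eq:plan-CG2} and the antisymmetry $G^*=-G$, rather than leaving spurious higher-order remainders. A second, more technical concern is justifying \eqref{eq:plan-CnabG} and \eqref{eq:plan-CG2} under only $\vecg\in\mH^2(D)$ and $\vecu\in\mH^1(D)$: the term $\vecu\times\Delta\vecg$ pairs an $\mH^1$ function with an $\mL^2$ object against $\vecv\in\mW_0^{1,\infty}(D)$, which is fine, but one should make sure every integration by parts is legitimate with this regularity and that $G$ maps $\mH^1(D)$ to $\mH^1(D)$ (true since $\vecg\in\mH^2\hookrightarrow\mW^{1,?}$, using the product rule) so that $G^2\vecv$ and $\nabla G^2$ are well-defined. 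Once these two lemmas and the commutation/adjoint relations from Lemma~\ref{lem:G2} are in hand, the proof is a direct, if somewhat lengthy, algebraic collapse of the expansion, and I would present it by displaying the fully expanded right-hand side, then striking terms line by line.
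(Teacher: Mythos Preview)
The paper does not itself prove this lemma; it is recalled from~\cite{BNT13}. Your plan follows the natural route and is presumably what the cited proof does: establish the two generator identities \eqref{eq:plan-CnabG} and \eqref{eq:plan-CG2} (these are exactly the commented-out Lemma in the paper) and then assemble via the closed-form expansion~\eqref{equ:G8} of $e^{\pm sG}$. Your derivations of \eqref{eq:plan-CnabG} and \eqref{eq:plan-CG2} are correct.

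However, your final ``collapse'' step is imprecise and contains a genuine gap. If you expand the right-hand side exactly as you describe---replacing $e^{-sG}\vecu$ and $e^{sG}\vecv$ separately by their three-term expansions---there are \emph{no} cross terms of order $\sin s\,(1-\cos s)$ or $(1-\cos s)^2$: each summand on the right-hand side carries only one exponential, so the expansion is linear in $(\sin s,\,1-\cos s)$, and \eqref{eq:plan-CnabG}--\eqref{eq:plan-CG2} give cleanly
\[
\text{RHS}=\inpro{(\sin s)\,C\vecu-(1-\cos s)(GC+CG)\vecu}{\vecv}_D.
\]
But this is \emph{not} yet the left-hand side, which equals $\inpro{e^{-sG}\big((\sin s)C+(1-\cos s)(GC+CG)\big)e^{-sG}\vecu}{\vecv}_D$; matching the two requires an operator identity you do not address, and your phrase ``up to the right placement of $e^{-sG}$'' hides precisely this step. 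A cleaner organization is to substitute $w=e^{-sG}\vecu$ first, so the claim reads
\[
\inpro{\wtd C(s,w)}{\vecv}_D=\inpro{\nabla w}{\nabla\vecv}_D-\inpro{\nabla(e^{sG}w)}{\nabla(e^{sG}\vecv)}_D,
\]
with both exponentials now on the same side. Expanding $\inpro{\nabla(e^{sG}w)}{\nabla(e^{sG}\vecv)}_D$ \emph{does} produce the bilinear cross terms you anticipate, and these must be reduced using \eqref{eq:plan-CnabG}, \eqref{eq:plan-CG2}, the skew-adjointness $G^*=-G$, and the relation $G^3=-G$ (valid since $|\vecg|=1$). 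That reduction is where the actual work lies, and your sketch does not yet carry it out.
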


From now on, we assume that $\vecg\in\mW^{2,\infty}(D)$.

We finish this section by stating two elementary identities involving the dot
and cross products of vectors in $\R^3$, which will be frequently used. 
For all $\veca, \vecb, \vecc\in\R^3$, the following identities hold
\begin{equation}\label{equ:abc}
\veca\times(\vecb\times\vecc) =
(\veca\cdot\vecc)\vecb
-
(\veca\cdot\vecb)\vecc
\end{equation}
and
\begin{equation}\label{equ:ele ide}
(\veca\times\vecb)\cdot\vecc
=
(\vecb\times\vecc)\cdot\veca
=
(\vecc\times\veca)\cdot\vecb.
\end{equation}

%------------------------------------------
\section{The main result}\label{sec:mai res}
In this section we state the definition of a weak martingale
solution to~\eqref{E:1.1}--\eqref{E:Maxwell2} and our main result.

Recalling that $\vecH_{\text{eff}} = \Delta\vecM + \vecH$,
multiplying~\eqref{E:1.1} by a test function~$\vecpsi\in\C^{\infty}(D)$ and integrating over
$(0,t)\times D$ we obtain formally
\begin{align*}
\inpro{\vecM(t)}{\vecpsi}_D
-
\inpro{\vecM_0}{\vecpsi}_D
&=
\lambda_1 \int_0^t
\inpro{\vecM\times\Delta\vecM}{\vecpsi}_D \ds
+
\lambda_1 \int_0^t
\inpro{\vecM\times\vecH}{\vecpsi}_D \ds
\\
&\quad
-
\lambda_2 \int_0^t
\inpro{\vecM\times(\vecM\times\Delta\vecM)}{\vecpsi}_D \ds
\\
&\quad
-
\lambda_2 \int_0^t
\inpro{\vecM\times(\vecM\times\vecH)}{\vecpsi}_D \ds
\\
&\quad
+
\int_0^t
\inpro{\vecM\times\vecg}{\vecpsi}_D \circ dW.
\end{align*}
From~\eqref{equ:ele ide}, the Green identity and
$\nabla\vecM \cdot (\nabla\vecM\times\vecpsi) = 0$ we define
\begin{align*}
\inpro{\vecM\times\Delta\vecM}{\vecpsi}_D
&=
- \inpro{\Delta\vecM}{\vecM\times\vecpsi}_D\\
&:=
\inpro{\nabla\vecM}{\nabla(\vecM\times\vecpsi)}_D
\\
&
=
\inpro{\nabla\vecM}{\nabla\vecM\times\vecpsi}_D
+
\inpro{\nabla\vecM}{\vecM\times\nabla\vecpsi}_D
\\
&
%=
%\inpro{\nabla\vecM}{\vecM\times\nabla\vecpsi}_D
=
-
\inpro{\vecM\times\nabla\vecM}{\nabla\vecpsi}_D,
\end{align*}
and similarly
\[
\inpro{\vecM\times(\vecM\times\Delta\vecM)}{\vecpsi}_D
:=
\inpro{\vecM\times\nabla\vecM}{\nabla(\vecM\times\vecpsi)}_D.
\]
Therefore,
\begin{align*}
\inpro{\vecM(t)}{\vecpsi}_{D}
-
\inpro{\vecM_0}{\vecpsi}_{D}
&=
-\lambda_1
\int_0^t
\inpro{\vecM\times\nabla\vecM}{\nabla\vecpsi}_{D}\ds
+
\lambda_1 \int_0^t
\inpro{\vecM\times\vecH}{\vecpsi}_{D}\ds
\nn\\
&\quad
-
\lambda_2
\int_0^t
\inpro{\vecM\times\nabla\vecM}{\nabla(\vecM\times\vecpsi)}_{D}\ds
\nn\\
&\quad 
-
\lambda_2 \int_0^t
\inpro{\vecM\times(\vecM\times\vecH)}{\vecpsi}_{D}\ds
+
\int_0^t
\inpro{\vecM\times\vecg}{\vecpsi}_{D}\circ dW(s).
\end{align*}

In the same manner, if we multiply~\eqref{E:Maxwell2} by a test function
$\veczeta\in\CTE{\CX}$, integrate over~$\wtd D_T$, and note~\eqref{InCond},
then we obtain, formally,
\[
\mu_0
\inpro{\vecH+\wtd\vecM}{\veczeta_t}_{\wtd D_T}
-
\mu_0
\inpro{\vecH_0+\wtd\vecM_0}{\veczeta(0)}_{\wtd D}
=
\inpro{\nabla\times(\sigma\nabla\times\vecH)}{\veczeta}_{\wtd D_T}.
\]
We remark that the time derivative is taken on $\veczeta$ because in
general $\wtd\vecM$ is not time differentiable.
Since 
$(\nabla\times\vecH) \times \vecn_{\wtd D} = 0$, see~\eqref{Cond}, and 
$(\nabla\times\veczeta) \times \vecn_{\wtd D} = 0$, see the definition
of~$\CX$ in Section~\ref{sec:not}, it follows 
from~\cite[Corollary 3.20]{Monk03} that 
\[
\inpro{\nabla\times(\sigma\nabla\times\vecH)}{\veczeta}_{\wtd D}
=
\inpro{\sigma\nabla\times\vecH}{\nabla\times\veczeta}_{\wtd D}.
\]
Hence
\[
\mu_0
\inpro{\vecH+\wtd\vecM}{\veczeta_t}_{\wtd D_T}
-
\mu_0
\inpro{\vecH_0+\wtd\vecM_0}{\veczeta(0)}_{\wtd D}
=
\inpro{\sigma\nabla\times\vecH}{\nabla\times\veczeta}_{\wtd D_T}.
\]

The above observations prompt us to define the solution
of~\eqref{E:1.1}--\eqref{E:Maxwell2} as follows.
%-----------------------------------
\begin{definition}\label{def:wea sol}
Given $T\in(0,\infty)$, a weak martingale solution
$(\Omega,\cF,(\cF_t)_{t\in[0,T]},\mP,W,\vecM,\vecH)$
to~\eqref{E:1.1}--\eqref{E:Maxwell2} on the time interval $[0,T]$,
consists of 
\begin{enumerate}
\renewcommand{\labelenumi}{(\alph{enumi})}
\item
a filtered probability space
$(\Omega,\cF,(\cF_t)_{t\in[0,T]},\mP)$ with the
filtration satisfying the usual conditions,
\item
a one-dimensional $(\cF_t)$-adapted Wiener process
$W=(W_t)_{t\in[0,T]}$,
\item 
a progressively measurable
process $\vecM : [0,T]\times\Omega \goto \mL^2(D)$,
\item 
a progressively measurable
process $\vecH : [0,T]\times\Omega \goto \mL^2(\wtd D)$
\end{enumerate}
such that there hold
\begin{enumerate}
\item
\quad
$\mP\big(\vecM \in C([0,T];\mH^{-1}(D))\big)=1$;
\item
\quad
$\mP\big(\vecH \in L^2(0,T;\mH(\curl;\wtd D)\big)=1$;
\item
\quad
$\mE\left( 
\esssup_{t\in[0,T]}\|\nabla\vecM(t)\|^2_D 
\right) < \infty$;
\item
\quad for all $t \in [0,T]$,
$|\vecM(t,\cdot)| = 1$  a.e.~in $D$, and $\mP$-a.s.;
\item
\quad
for every  $t\in[0,T]$,
for all $\vecpsi\in\C^{\infty}(D)$, 
$\mP$-a.s.:
\begin{align}\label{wE:1.1}
\inpro{\vecM(t)}{\vecpsi}_{D}
-
\inpro{\vecM_0}{\vecpsi}_{D}
&=
-\lambda_1
\int_0^t
\inpro{\vecM\times\nabla\vecM}{\nabla\vecpsi}_{D}\ds\nn\\
&\quad-
\lambda_2
\int_0^t
\inpro{\vecM\times\nabla\vecM}{\nabla(\vecM\times\vecpsi)}_{D}\ds\nn\\
&\quad 
+
\lambda_1 \int_0^t
\inpro{\vecM\times\vecH}{\vecpsi}_{D}\ds
\nn\\
&\quad 
-
\lambda_2 \int_0^t
\inpro{\vecM\times(\vecM\times\vecH)}{\vecpsi}_{D}\ds
\nn\\
&\quad+
\int_0^t
\inpro{\vecM\times\vecg}{\vecpsi}_{D}\circ dW(s);
\end{align}
\item
\quad
for all $\veczeta\in\CTE{\CX}$, $\mP$-a.s.:
\begin{equation}\label{wE:Maxwell2}
\mu_0\inpro{\vecH+\wtd\vecM}{\veczeta_t}_{\wtd D_T}
-
\mu_0\inpro{\vecH_0+\wtd\vecM_0}{\veczeta(0,\cdot)}_{\wtd D}
=
\inpro{\sig\nabla\times\vecH}{\nabla\times\veczeta}_{\wtd D_T}.
\end{equation}
\end{enumerate}
\end{definition}

The main theorem of the paper is stated below.
\begin{theorem}\label{the:mai}
Assume that $\vecg\in\mW^{2,\infty}(D)$
satisfies~\eqref{equ:g 1} and $\big(\vecM_0,\vecH_0\big)$
satisfies
\begin{equation}\label{E:Cond1}
\begin{aligned}
\vecM_0\in\mH^2(D)
&,\quad
|\vecM_0|=1\text{\ a.e. in }D, 
\\
(\vecH_0+\wtd\vecM_0)\in\mH^1(\wtd D)
&,\quad 
\nabla\times(\vecH_0+\wtd\vecM_0) \in\mH^1(\wtd D).
\end{aligned}
\end{equation}
For each $T>0$, there exists a weak
martingale solution to~\eqref{E:1.1}--\eqref{E:Maxwell2}.
\end{theorem}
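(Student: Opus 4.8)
The plan is to construct the weak martingale solution by the usual three-step programme: (i) reformulate the Stratonovich equation~\eqref{E:1.1} into a random (pathwise) PDE via the substitution $\vecm(t) = e^{-W(t)G}\vecM(t)$, so that the new unknown $\vecm$ is time-differentiable for fixed $\omega$; (ii) discretise the reformulated coupled system by the $\theta$-linear finite element scheme of~\cite{LeTra12}, obtaining a sequence of approximate solutions $(\vecm_h,\vecH_h)$ indexed by the mesh parameter $h$ and time step $k$; (iii) derive $h,k$-uniform a priori estimates, pass to the limit using compactness and the Skorokhod representation theorem to obtain a new probability space and a limiting process, and finally invert the substitution to recover $(\vecM,\vecH)$ solving~\eqref{wE:1.1}--\eqref{wE:Maxwell2}.

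For step (i), applying the It\^o/Stratonovich calculus together with Lemma~\ref{lem:G2} (in particular $e^{sG}(\vecu\times\vecv)=e^{sG}\vecu\times e^{sG}\vecv$ and $e^{sG}G=Ge^{sG}$) transforms the cross-product nonlinearities covariantly; the effective-field term $\Delta\vecM+\vecH$ produces the correction operator $\wtd C(s,\cdot)$ from Lemma~\ref{lem:eC wea}, because $e^{-sG}$ does not commute with $\Delta$. The outcome is a system in which the magnetisation equation has a classical time derivative $\pt\vecm$ with random (but $\omega$-wise smooth in $t$) coefficients, coupled to the Maxwell equation for $\vecH$; the Maxwell part is linear and needs only the harmless reinterpretation $\wtd\vecM = e^{W(t)G}\wtd\vecm$. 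For step (ii), the scheme from~\cite{LeTra12} approximates $\pt\vecm$ by a finite element function $\vecv_h^i$ at each time level, updates $\vecm_h^{i+1}=\vecm_h^i+k\vecv_h^i$, renormalises nodally to keep $|\vecm_h|=1$ at the nodes, and solves a coupled linear system for $(\vecv_h^i,\vecH_h^{i+1})$; the only change here is the presence of the $W$-dependent coefficients, evaluated at $t_i$.

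The heart of the proof is step (iii). First I would establish the energy estimate: testing the discrete magnetisation equation with $\vecv_h^i$ and the discrete Maxwell equation with $\vecH_h^{i+1}$, summing, and using $\theta\ge 1/2$ (or a CFL condition $k=o(h^2)$ when $\theta<1/2$) to absorb the indefinite terms, one obtains
\begin{equation*}
\max_i \|\nabla\vecm_h^i\|_D^2 + \max_i\|\vecH_h^i\|_{\wtd D}^2 + \sum_i k\,\|\vecv_h^i\|_D^2 + \sum_i k\,\|\nabla\times\vecH_h^i\|_{\wtd D}^2 \le c,
\end{equation*}
with $c$ depending on the data in~\eqref{E:Cond1} but not on $h,k$; since the coefficients are bounded $\mP$-a.s.\ this holds pathwise, and taking expectations (using $\mE|W(t_i)|^p<\infty$) gives the moment bounds needed for tightness. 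These bounds give weak-$*$ limits in $L^\infty(0,T;\mH^1(D))$, $L^2(0,T;\mH(\curl;\wtd D))$ and $L^2(D_T)$; a discrete Gronwall/time-increment estimate yields equicontinuity of $\vecm_h$ in a weaker norm (e.g.\ $C^{0,\beta}([0,T];\mH^{-1}(D))$), so by Aubin--Lions the laws of $(\vecm_h,\vecH_h,W)$ are tight on a suitable path space. The Skorokhod theorem then furnishes a new probability space carrying a.s.-convergent copies, and I would identify the limit by passing to the limit in the weak formulation of the scheme — the delicate points being (a) convergence of the renormalised nodal interpolant to $\vecm$ with $|\vecm|=1$ a.e., which uses the standard estimate $\|\vecm_h - I_h(\vecm_h/|\vecm_h|)\|\le c h\,\|\nabla\vecm_h\|$ and the $h\to 0$ limit, and (b) the stochastic-integral identification, handled by the martingale characterisation (showing the limiting process is a Wiener process adapted to the limit filtration and that the putative equation holds as an identity of martingales). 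The main obstacle I anticipate is precisely the passage to the limit in the nonlinear terms involving $\vecm\times\nabla\vecm$ and $\vecm\times(\vecm\times\vecH)$ simultaneously with the random coefficients $e^{\pm W(t)G}$ and $\wtd C$: one needs strong $L^2(D_T)$ convergence of $\vecm_h$ (from compactness) together with uniform control of the $W$-dependent coefficients and their time-regularity, and then a careful inversion of the transform $\vecM = e^{WG}\vecm$ to check that $(\vecM,\vecH)$ satisfies~\eqref{wE:1.1} including the Stratonovich term — this last step reverses the computation of Section~\ref{sec:equ eqn} and must be done on the new probability space.
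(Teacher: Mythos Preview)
Your three-step programme matches the paper's route (transform $\vecm=e^{-WG}\vecM$, $\theta$-linear scheme with nodal renormalisation, pathwise bounds, tightness plus Skorokhod, then invert via Lemma~\ref{lem:we mP}), but one point you called ``harmless'' is where the paper makes a deliberate choice that you should adopt. You keep $\vecH$ as the Maxwell unknown and test with $\vecH_h^{i+1}$; the paper instead sets $\vecP:=\vecH+\wtd\vecM$, see~\eqref{equ:vecP},~\eqref{wE:vecP} and the discrete form~\eqref{disE:NewMax2}, and tests with $\vecP_h^{(j+1)}$. Testing the $\vecH$-equation produces a right-hand term $\langle d_t\wtd\vecM_h,\vecH_h\rangle$, and since $\wtd\vecM=e^{WG}\wtd\vecm$ has increments of order $k^{1/2}$ through $W$, this term does not close with a deterministic constant --- you would be forced into genuine moment estimates and a martingale argument at the level of the scheme. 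With $\vecP$ as unknown the time-difference of $\vecM$ disappears from the scheme altogether; $e^{W_kG_h}\vecm_h$ enters only under a curl, controlled by~\eqref{equ:boundcurlP}, and one obtains the deterministic pathwise bound~\eqref{InE:mP3}. Relatedly, because all $W$-dependence in the reformulated system enters through $\sin W$, $\cos W$ and $e^{\pm WG}$ (uniformly bounded, Lemma~\ref{lem:Fk}), the constant in Lemma~\ref{lem:4.2} is already deterministic and there is no stochastic integral to identify in the $\vecm$-limit: the passage to the limit is purely pathwise, and the Stratonovich term reappears only when the transform is inverted at the end.
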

\begin{proof}
The theorem is a direct consequence of Theorem~\ref{the:mai 2}.
\end{proof}

%-----------------------------------------------------------------------------------------
%-----------------------------------------------------------------------------------------
\section{Equivalence of weak solutions}\label{sec:equ eqn}
In this section, we use the operator $G$ defined in Section~\ref{sec:not} to 
define new variables~$\vecm$ and $\vecP$ from $\vecM$ and $\vecH$.

Informally, if $\big(\vecM,\vecH\big)$ is a weak solution
to~\eqref{wE:1.1}--\eqref{wE:Maxwell2} then we can define new 
processes~$\vecm$ and $\vecP$ (see~\eqref{equ:vecm}--\eqref{equ:vecP} below) 
such that the Stratonovich differential $\circ dW(t)$ vanishes in the
partial differential equation satisfied by $\vecm$.
Moreover, it will be seen that $\vecm$ is differentiable with
respect to $t$. We will make this argument more rigorous in the following lemma.

Let a filtered probability space $\big(\Omega,\cF,(\cF_t)_{t\in[0.T]},\mP\big)$ 
and a Wiener process $W(t)$ on it be given.
We define a new processes $\vecm$ and $\vecP$ from processes $\vecM$ and $\vecH$ 
\begin{align}
\vecm(t,\cdot) 
&:=
e^{-W(t)G}\vecM(t,\cdot) \quad\forall t
\ge0, \ a.e. \,\text{ in } D,\label{equ:vecm}\\
\vecP(t,\cdot)
&:=
\vecH(t,\cdot) + \wtd\vecM(t,\cdot)
\quad\forall t
\ge0, \ a.e. \,\text{ in } \wtd D,\label{equ:vecP}
\\
\vecP_0
&:=
\vecH_0 + \wtd\vecM_0
\quad a.e. \, \text{ in } \wtd D,
\nn
\end{align}
where $\wtd\vecM_0$ is the zero extension of~$\vecM_0$ onto~$\wtd D$.
Then it follows immediately from~\eqref{equ:G12} and~\eqref{equ:eadj} that,
for all $t\in[0,T]$ and almost all $x\in D$, 
\begin{equation}\label{equ:m M 1}
\snorm{\vecM(t,\cdot)}{} = 1
\quad\text{if and only if}\quad 
\snorm{\vecm(t,\cdot)}{} = 1.
\end{equation}
The following lemma shows that in order to find~$\vecM$ and~$\vecH$, it
suffices to find~$\vecm$ and~$\vecP$.
%-----------------------------
\begin{lemma}\label{lem:we mP}
Let $\vecm\in H^1\big(0,T;\mH^1(D)\big)$ and 
$\vecP\in L^2(0,T;\mH(\curl;\wtd D))$, $\mP$-a.s., satisfy
\begin{multline}\label{InE:14}
\inpro{\vecm_t}{\vecxi}_{D_T}
+
\lambda_1
\inpro{\vecm\times\nabla\vecm}{\nabla\vecxi}_{D_T}
+
\lambda_2
\inpro{\vecm\times\nabla\vecm}{\nabla(\vecm\times\vecxi)}_{D_T}
-
\inpro{F(t,\vecm)}{\vecxi}_{D_T}
\\
-
\lambda_1\inpro{\vecm\times e^{-W(t)G}\vecP}{\vecxi}_{D_T}
+
\lambda_2\inpro{\vecm\times(\vecm\times e^{-W(t)G}\vecP)}{\vecxi}_{D_T}
=0 
%\quad \forall \vecpsi\in L^2\big(0,T;\mW^{1,\infty}(D)\big),
\end{multline}
and
\begin{equation}\label{wE:vecP}
\mu_0\inpro{\vecP}{\veczeta_t}_{\wtd D_T}
-
\mu_0\inpro{\vecP_0}{\veczeta(0,\cdot)}_{\wtd D}
=
\inpro{\sig\nabla\times\vecP}{\nabla\times\veczeta}_{\wtd D_T}
-
\inpro{\sig\nabla \times (e^{W(t)G}\vecm)}{\nabla\times\veczeta}_{D_T},
%\quad\forall \veczeta\in C^{\infty}_c([0,T),\CX),
\end{equation}
where
\begin{equation}\label{equ:R}
 F(t,\vecm)
 =
 \lambda_1\vecm\times \wtd C(W(t),\vecm)
 -
 \lambda_2\vecm\times(\vecm\times \wtd C(W(t),\vecm))
 \end{equation}
for all $\vecxi\in L^2\big(0,T;\mW^{1,\infty}(D)\big)$ and
$\veczeta\in \CTE{\CX}$, with $\wtd C$ defined in Lemma~\ref{lem:eC wea}.
Then~$\vecM = e^{W(t)G}\vecm$ and 
$\vecH = \vecP - \wtd\vecM$ satisfy
~\eqref{wE:1.1}--\eqref{wE:Maxwell2} $\mP$-a.s.
\end{lemma}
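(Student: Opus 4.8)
The plan is to substitute $\vecM = e^{W(t)G}\vecm$ and $\vecH = \vecP - \wtd\vecM$ into~\eqref{wE:1.1}--\eqref{wE:Maxwell2} and verify each equation in turn, using the algebraic identities for $e^{sG}$ from Lemma~\ref{lem:G2}, the commutation/adjoint relations, and Lemma~\ref{lem:eC wea} to handle the Laplacian terms. The key input is that, unlike $\vecM$, the process $\vecm$ is differentiable in time (since $\vecm \in H^1(0,T;\mH^1(D))$), so the time derivative can be moved onto $\vecm$ rather than onto the test function in~\eqref{wE:1.1}. I would first deal with~\eqref{wE:Maxwell2}, which is easier: starting from~\eqref{wE:vecP}, note $\vecP = \vecH + \wtd\vecM$ and $\vecP_0 = \vecH_0 + \wtd\vecM_0$, so the left-hand side of~\eqref{wE:vecP} already equals the left-hand side of~\eqref{wE:Maxwell2}; on the right, since $e^{W(t)G}\vecm = \vecM$ (up to zero extension, the cross product with $\vecg$ being supported in $D$), the term $\inpro{\sig\nabla\times\vecP}{\nabla\times\veczeta}_{\wtd D_T} - \inpro{\sig\nabla\times(e^{W(t)G}\vecm)}{\nabla\times\veczeta}_{D_T}$ collapses to $\inpro{\sig\nabla\times\vecH}{\nabla\times\veczeta}_{\wtd D_T}$ once we observe $\nabla\times\wtd\vecM = \nabla\times\vecM$ on $D$ and $\wtd\vecM = 0$ outside $\ol D$, so $\vecP - e^{W(t)G}\vecm \cdot\mathbf 1_D = \vecH$ in the relevant sense.

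For~\eqref{wE:1.1} the strategy is to start from~\eqref{InE:14}, which holds for all $\vecxi\in L^2(0,T;\mW^{1,\infty}(D))$, and first localize in time. Given a test function $\vecpsi\in\C^\infty(D)$ and $t\in[0,T]$, I would choose $\vecxi(s,\cdot) = \chi_{[0,t]}(s)\, e^{W(s)G}\vecpsi$ (smoothed if necessary, then passing to the limit), which lies in the required space because $e^{W(s)G}$ is bounded and $W$ is continuous. With this choice the term $\inpro{\vecm_t}{\vecxi}_{D_T}$ becomes $\int_0^t\inpro{\vecm_t}{e^{W(s)G}\vecpsi}_D\ds$; now I would use the Stratonovich product rule together with~\eqref{equ:G9} to recognize that $d\vecM = d(e^{W(s)G}\vecm) = e^{W(s)G}\,d\vecm + (e^{W(s)G}G\vecm)\circ dW = e^{W(s)G}\,d\vecm + (\vecM\times\vecg)\circ dW$, so that $\int_0^t\inpro{\vecm_t}{e^{W(s)G}\vecpsi}_D\ds = \inpro{\vecM(t)}{\vecpsi}_D - \inpro{\vecM_0}{\vecpsi}_D - \int_0^t\inpro{\vecM\times\vecg}{\vecpsi}_D\circ dW(s)$. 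This produces exactly the left-hand side of~\eqref{wE:1.1} plus the Stratonovich term on its right-hand side.

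It then remains to match the deterministic integrands. For the lower-order terms this is a direct application of~\eqref{equ:G11} and~\eqref{equ:eadj}: $\inpro{\vecm\times e^{-W(s)G}\vecP}{e^{W(s)G}\vecpsi}_D = \inpro{e^{W(s)G}(\vecm\times e^{-W(s)G}\vecP)}{\vecpsi}_D = \inpro{\vecM\times\vecP}{\vecpsi}_D$, and then $\vecP = \vecH + \wtd\vecM$ with $\vecM\times\wtd\vecM = 0$ on $D$ recovers the $\vecM\times\vecH$ and $\vecM\times(\vecM\times\vecH)$ terms with the correct signs; similarly for the double cross product. The genuinely delicate part is the exchange-energy (Laplacian) terms $\lambda_1\inpro{\vecm\times\nabla\vecm}{\nabla\vecxi}_{D_T} + \lambda_2\inpro{\vecm\times\nabla\vecm}{\nabla(\vecm\times\vecxi)}_{D_T}$ together with the correction $\inpro{F(t,\vecm)}{\vecxi}_{D_T}$: here I would substitute $\vecxi = e^{W(s)G}\vecpsi$ and $\vecm = e^{-W(s)G}\vecM$ and invoke Lemma~\ref{lem:eC wea} with $s = W(s)$, $\vecu = \vecM$, $\vecv = \vecpsi$ (noting $\vecpsi\in\C^\infty(D)$ is admissible as an $\mW_0^{1,\infty}$-type test function, or extending/truncating as needed), which is precisely engineered so that $\inpro{\wtd C(W(s),e^{-W(s)G}\vecM)}{\vecpsi}_D = \inpro{\nabla\vecm}{\nabla\vecpsi}_D - \inpro{\nabla\vecM}{\nabla(e^{W(s)G}\vecpsi)}_D$. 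Using this identity, the definition of $F$ in~\eqref{equ:R}, and~\eqref{equ:G11}, the $\vecm$-gradient terms plus $F$ collapse to the $\vecM$-gradient terms $-\lambda_1\inpro{\vecM\times\nabla\vecM}{\nabla\vecpsi}_D - \lambda_2\inpro{\vecM\times\nabla\vecM}{\nabla(\vecM\times\vecpsi)}_D$; keeping track of the sign conventions in the definition of $\inpro{\vecM\times\Delta\vecM}{\vecpsi}_D$ from Section~\ref{sec:mai res} and the fact that $e^{W(s)G}$ commutes with cross products is where the bookkeeping is heaviest. I expect this reconciliation of the exchange terms via Lemma~\ref{lem:eC wea}, and the correct handling of the Stratonovich product rule in moving $d/dt$ between $\vecm$ and $\vecM$, to be the main obstacles; everything else is a matter of algebra with the identities already provided.
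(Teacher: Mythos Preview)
Your overall strategy matches the paper's almost exactly: apply the Stratonovich product rule to $\vecM=e^{W(t)G}\vecm$ to isolate $\int_0^t\inpro{e^{W(s)G}\vecm_t}{\vecpsi}_D\ds$, then plug a suitably rotated test function into~\eqref{InE:14} and use Lemma~\ref{lem:eC wea} together with~\eqref{equ:G11} so that the $F$-terms cancel against the defect in the gradient terms. The Maxwell part is handled identically.

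There is, however, a consistent sign error in your choice of test function that makes the written computations false as stated. From $d\vecM = e^{W(s)G}\vecm_t\,ds + G\vecM\circ dW(s)$ and the adjoint relation~\eqref{equ:eadj} you get
\[
\inpro{\vecM(t)}{\vecpsi}_D-\inpro{\vecM_0}{\vecpsi}_D-\int_0^t\inpro{G\vecM}{\vecpsi}_D\circ dW(s)
=\int_0^t\inpro{\vecm_t}{e^{-W(s)G}\vecpsi}_D\,ds,
\]
not $\int_0^t\inpro{\vecm_t}{e^{W(s)G}\vecpsi}_D\,ds$. Hence the correct test function in~\eqref{InE:14} is $\vecxi=e^{-W(\cdot)G}\vecpsi$, with a \emph{minus} sign in the exponent. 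The same slip appears in your treatment of the $\vecP$-terms: moving $e^{W(s)G}$ across the inner product via~\eqref{equ:eadj} gives $\inpro{\vecm\times e^{-W(s)G}\vecP}{e^{-W(s)G}\vecpsi}_D=\inpro{e^{W(s)G}(\vecm\times e^{-W(s)G}\vecP)}{\vecpsi}_D=\inpro{\vecM\times\vecP}{\vecpsi}_D$, whereas with your $e^{W(s)G}\vecpsi$ the adjoint would produce $e^{-W(s)G}$ on the left and the expression would not simplify to $\vecM\times\vecP$. Once you flip this sign, the paper applies Lemma~\ref{lem:eC wea} with $\vecu=\vecM$ and $\vecv=\vecm\times e^{-W(s)G}\vecpsi$ (respectively $\vecv=\vecm\times(\vecm\times e^{-W(s)G}\vecpsi)$), and the cancellation $T_1+T_3$ and $T_2+T_4$ goes through exactly as you anticipate.
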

%--------------------------
\begin{proof}
\mbox{}

\noindent
\underline{{\it Step 1:} $\vecM$ and $\vecH$ satisfy~\eqref{wE:1.1}:}

%%%%%%%%%%%%%%%%%%%%%%%%%%%%%
\begin{comment}
The result can be obtained by using the same technique as in the proof of
~\cite[Lemma 4.1]{BNT13}.
Since $\vecM=e^{W(t)G}\vecm$, using the It\^o formula we deduce
\[
d\vecM(t) 
=
Ge^{W(t)G}\vecm \ dW(t)
+ e^{W(t)G} \ d\vecm(t)
+\frac12 G^2 e^{W(t)G}\vecm(t) \dt
+
Ge^{W(t)G} \ d\vecm(t) \ dW(t).
\]
Since $e^{W(t)G}$ is a semimartingale and $\vecm$ is absolutely continuous,
the joint quadratic variation %$\inproqua{e^{W(t)G}}{\vecm}_t$ vanishes.
$d\vecm(t) \ dW(t)$ vanishes.
Therefore,
\begin{align}\label{equ:SLLG Ito}
d\vecM(t) 
&=
Ge^{W(t)G}\vecm \ dW(t)
+ e^{W(t)G} \ d\vecm(t) 
+
\frac12 G^2 e^{W(t)G}\vecm(t) \dt
\nn \\
&=
G\vecM \ dW(t)
+ e^{W(t)G} \ d\vecm(t) 
+ \frac12 G^2 \vecM \dt.
\end{align}
Recalling
the relation between the Stratonovich and
It\^o differentials, namely
\[
 (G\vecu) \circ dW(t)
 =
 G(\vecu) \dW(t)
 +
 \frac12 G'(\vecu)[G\vecu] \dt
\]
and noting that
 \[
 G'(\vecu)[G\vecu] = G^2\vecu,
 \]
we rewrite~\eqref{equ:SLLG Ito} in the form of Stratonovich differential as
\[
d\vecM(t) 
=
G\vecM \dW(t) + e^{W(t)G} \ d\vecm(t).
\]
Multiplying both sides by a test function
$\vecpsi\in\C_0^{\infty}(D)$ and integrating over $D$ we
obtain
\begin{align}\label{equ:wIto}
\inpro{d\vecM}{\vecpsi}_D
&=
\inpro{G\vecM}{\vecpsi}_D\dW(t)
+
\inpro{e^{W(t)G}d\vecm}{\vecpsi}_D\nn\\
&=
\inpro{G\vecM}{\vecpsi}_D\dW(t)
+
\inpro{d\vecm}{e^{-W(t)G}\vecpsi}_D,
\end{align}
where in the last step we used ~\eqref{equ:eadj}.
\end{comment}
%%%%%%%%%%%%%%%%%%%%%%%%%%%%%
Since $e^{W(t)G}$ is a semimartingale and $\vecm$ is absolutely continuous, 
using It\^o's formula for $\vecM=e^{W(t)G}\vecm$ (see e.g.~\cite{DaPrato92}),
we deduce
\begin{align}\label{equ:M Ito}
\vecM(t)
&=
\vecM(0)
+
\int_0^t Ge^{W(s)G}\vecm \dW(s)
+ 
\int_0^t \frac12 G^2 e^{W(s)G}\vecm \ds
+
\int_0^t e^{W(s)G} \vecm_t \ds
\nn
\\
&=
\vecM(0)
+
\int_0^t G\vecM \dW(s)
+ 
\dfrac12 \int_0^t G^2 \vecM \ds
+
\int_0^t e^{W(s)G} \vecm_t \ds,
\end{align}
where the first integral on the right-hand side is an It\^o integral and the
last two are Bochner integrals.
Recalling the relation between the Stratonovich and
It\^o differentials, namely
\begin{equation}\label{equ:Str Ito}
(G\vecu)\circ dW(s)
=
G\vecu \dW(s)
+
\dfrac12 G'(\vecu)[G\vecu] \ds,
\end{equation}
and noting that
$
G'(\vecu)[G\vecu] = G^2\vecu,
$
we rewrite~\eqref{equ:M Ito} in the Stratonovich form as
\[
\vecM(t)
=
\vecM(0)
+
\int_0^t G\vecM \circ dW(s) 
+
\int_0^t e^{W(s)G} \vecm_t\ds.
%+ Ge^{W(t)G} \ d\vecm(t) \ dW(t).
\]
Multiplying both sides of the above equation by a test function
$\vecpsi\in\C^{\infty}(D)$
and integrating over $D$ we obtain
\begin{align}\label{equ:wIto}
\inpro{\vecM(t)}{\vecpsi}_{D}
&=
\inpro{\vecM(0)}{\vecpsi}_{D}
+
\int_0^t
\inpro{G\vecM}{\vecpsi}_{D}\circ dW(s) %\nn\\
%&\quad+
+
\int_0^t
\inpro{e^{W(s)G}\vecm_t}{\vecpsi}_{D}\ds\nn\\
%&\quad+
%\inpro{Ge^{W(t)G}d\vecm}{\vecpsi}_{D}dW(t) \nn\\
&=
\inpro{\vecM(0)}{\vecpsi}_{D}
+
\int_0^t
\inpro{G\vecM}{\vecpsi}_{D}\circ dW(s) %\nn\\
%&\quad+
+
\int_0^t
\inpro{\vecm_t}{e^{-W(s)G}\vecpsi}_{D}\ds,
%\nn\\
%&\quad-
%\inpro{d\vecm}{e^{-W(t)G}G\vecpsi}_{D}dW(t),
\end{align}
where in the last step we used~\eqref{equ:eadj}.
%and the fact that $(G\vecu)\cdot\vecv = - \vecu\cdot G\vecv$ for any
%$\vecu,\vecv\in\mL^2(D)$, which can be
%shown by using~\eqref{equ:ele ide}.

On the other hand, we note that 
$e^{-W(\cdot)G}\vecpsi \in L^2\big(0,t;\mW^{1,\infty}(D)\big)$ for $t\in[0,T]$.
Let the test function $ \vecxi$  in~\eqref{InE:14} be $e^{-W(\cdot)G}\vecpsi $, 
we obtain from ~\eqref{equ:R} that
\begin{align*}
\int_0^t \inpro{\vecm_t}{e^{-W(s)G}\vecpsi}_{D} \ds
&=
-
\lambda_1
\int_0^t
\inpro{\vecm\times\nabla\vecm}{\nabla\left(e^{-W(s)G}\vecpsi\right)}_{D} \ds
\\
&\quad
-
\lambda_2
\int_0^t
\inpro{\vecm\times\nabla\vecm}%
{\nabla\left(\vecm\times\left(e^{-W(s)G}\vecpsi\right)\right)}_{D} \ds
\\
&\quad
+
\lambda_1
\int_0^t 
\inpro{\vecm\times\wtd C(W(s),\vecm)}{e^{-W(s)G}\vecpsi}_{D} \ds
\\
&\quad
-
\lambda_2
\int_0^t 
\inpro{\vecm\times\left(\vecm\times\wtd C(W(s),\vecm)\right)}%
{e^{-W(s)G}\vecpsi}_{D} \ds
%\inpro{F(t,\vecm)}{e^{-W(s)G}\vecpsi}_{D} \ds
\\
&\quad
+
\lambda_1 \int_0^t
\inpro{\vecm\times e^{-W(t)G}\vecP}{e^{-W(s)G}\vecpsi}_{D} \ds
\\
&\quad
-
\lambda_2 \int_0^t 
\inpro{\vecm\times(\vecm\times e^{-W(t)G}\vecP)}{e^{-W(s)G}\vecpsi}_{D} \ds
\\
&
=:
\int_0^t (T_1(s) + \cdots + T_6(s)) \ds.
\end{align*}
Considering $T_3$, we use successively~\eqref{equ:ele ide}, 
Lemma~\ref{lem:eC wea}, \eqref{equ:vecm}, and~\eqref{equ:G11} to obtain
\begin{align*}
T_3(s)
&
=
\lambda_1
\inpro{\vecm\times\wtd C(W(s),\vecm)}{e^{-W(s)G}\vecpsi}_D
=
-
\lambda_1
\inpro{\wtd C(W(s),\vecm)}{\vecm\times e^{-W(s)G}\vecpsi}_D
\\
&
=
-
\lambda_1
\inpro{\nabla\vecm}{\nabla\left(\vecm\times e^{-W(s)G}\vecpsi\right)}_D
+
\lambda_1
\inpro{\nabla\vecM}{\nabla\left(\vecM\times\vecpsi\right)}_D
\\
&
=
-
\lambda_1
\inpro{\nabla\vecm}{\vecm\times \nabla\left(e^{-W(s)G}\vecpsi\right)}_D
+
\lambda_1
\inpro{\nabla\vecM}{\vecM\times\nabla\vecpsi}_D
\\
&
=
\lambda_1
\inpro{\vecm\times \nabla\vecm}{\nabla\left(e^{-W(s)G}\vecpsi\right)}_D
-
\lambda_1
\inpro{\vecM\times\nabla\vecM}{\nabla\vecpsi}_D.
\end{align*}
Therefore,
\[
T_1 + T_3
=
-
\lambda_1
\inpro{\vecM\times\nabla\vecM}{\nabla\vecpsi}_D.
\]
Similarly, considering $T_4$ we have
\begin{align*}
T_4(s)
&
=
-
\lambda_2
\inpro{\vecm\times\left(\vecm\times\wtd C(W(s),\vecm)\right)}%
{e^{-W(s)G}\vecpsi}_{D} 
\\
&
=
\lambda_2
\inpro{\vecm\times\nabla\vecm}%
{\nabla\left(\vecm\times e^{-W(s)G}\vecpsi\right)}_D
-
\lambda_2
\inpro{\vecM\times\nabla\vecM}{\nabla(\vecM\times\vecpsi)}_D,
\end{align*}
so that
\[
T_2 + T_4
=
-
\lambda_2
\inpro{\vecM\times\nabla\vecM}{\nabla(\vecM\times\vecpsi)}_D.
\]
On the other hand, by using \eqref{equ:eadj},
\eqref{equ:G11}, and noting that $\vecP = \vecH + \vecM$ in $D$, we
obtain
\[
T_5(s)
=
\lambda_1
\inpro{\vecm\times e^{-W(s)G}\vecP}{e^{-W(s)G}\vecpsi}_D
=
\lambda_1
\inpro{\vecM\times\vecH}{\vecpsi}_D
\]
and
\[
T_6(s)
=
-
\lambda_2
\inpro{\vecm\times\left(\vecm\times e^{-W(s)G}\vecP\right)}{e^{-W(s)G}\vecpsi}_D
=
-
\lambda_2
\inpro{\vecM\times(\vecM\times\vecH)}{\vecpsi}_D.
\]
Therefore,
\begin{align*}
\int_0^t \inpro{\vecm_t}{e^{-W(s)G}\vecpsi}_{D} \ds
&=
-
\lambda_1
\int_0^t
\inpro{\vecM\times\nabla\vecM}{\nabla\vecpsi}_D \ds
%\\
%& \quad
-
\lambda_2
\int_0^t
\inpro{\vecM\times\nabla\vecM}{\nabla(\vecM\times\vecpsi)}_D \ds
\\
& \quad
+
\lambda_1
\int_0^t
\inpro{\vecM\times\vecH}{\vecpsi}_D \ds
%\\
%& \quad
-
\lambda_2
\int_0^t
\inpro{\vecM\times(\vecM\times\vecH)}{\vecpsi}_D \ds.
\end{align*}
This equation and~\eqref{equ:wIto} give
\begin{align*}
\inpro{\vecM(t)}{\vecpsi}_{D}
&=
\inpro{\vecM(0)}{\vecpsi}_{D}
+
\int_0^t
\inpro{G\vecM}{\vecpsi}_{D}\circ dW(s) %\nn\\
\nn\\
&\quad
-
\lambda_1
\int_0^t
\inpro{\vecM\times\nabla\vecM}{\nabla\vecpsi}_D \ds
%\\
%& \quad
-
\lambda_2
\int_0^t
\inpro{\vecM\times\nabla\vecM}{\nabla(\vecM\times\vecpsi)}_D \ds
\\
& \quad
+
\lambda_1
\int_0^t
\inpro{\vecM\times\vecH}{\vecpsi}_D \ds
%\\
%& \quad
-
\lambda_2
\int_0^t
\inpro{\vecM\times(\vecM\times\vecH)}{\vecpsi}_D \ds.
\end{align*}
Hence, $\vecM$ and $\vecH$ satisfy~\eqref{wE:1.1}.
%--------------

\medskip
\noindent
\underline{{\it Step 2:} $\vecM$ and $\vecH$ satisfy~\eqref{wE:Maxwell2}:}

This follows immediately from~\eqref{wE:vecP} and the fact that
\[
\inpro{\sig\nabla\times\wtd\vecM}{\nabla\times\veczeta}_{\wtd D_T}
=
\inpro{\sig\nabla\times\vecM}{\nabla\times\veczeta}_{D_T},
\]
completing the proof of the lemma.
\end{proof}
%----------------------------------------------------------
%The following results can be easily proved.
%\begin{lemma}\label{lem:m 1}
%Under the assumption~\eqref{equ:g 1}, $\vecM$ satisfies
%\[
%\snorm{\vecM(t,\cdot)}{} = 1
%\quad\forall t\ge0, \ a.e. \text{ in } D,
%\]
%if and only if $\vecm$ defined in~\eqref{equ:vecm} satisfies
%\[
%\snorm{\vecm(t,\cdot)}{} = 1
%\quad\forall t\ge0, \ a.e. \text{ in } D.
%\]
%\end{lemma}
%\begin{proof}
%The result follows from~\eqref{equ:G12} and~\eqref{equ:eadj}. 
%\end{proof}
 %----------------------------------------------------
%------------------------------------------------------------
In the next lemma we provide an equivalence of equation~\eqref{InE:14},
namely its Gilbert form.
%-----------------------------------------------------------
\begin{lemma}\label{lem:4.1}
Assume that $\vecm\in H^1\big(0,T;\mH^1(D)\big)$ and $\vecP\in\mL^2(\wtd D_T)$, 
$\mP$-a.s., satisfy
\begin{equation}\label{equ:m 1}
|\vecm(t,\cdot)| = 1, \quad t\in(0,T), \ a.e. \text{ in } D,\, \mP\text{-a.s.}
\end{equation}
%Then $(\vecm,\vecP)$ satisfies~\eqref{InE:14} $\mP$-a.s. if and only if 
Assume further that
$(\vecm,\vecP)$ satisfies $\mP$-a.s.
\begin{multline}\label{InE:13}
\lambda_1\inpro{\vecm\times\vecm_t}{\vecm\times\vecvarphi}_{D_T}
-
\lambda_2\inpro{\vecm_t}{\vecm\times\vecvarphi}_{D_T} 
-
\mu \inpro{\nabla\vecm}{\vecm\times\nabla\vecvarphi}_{D_T}
\\
-
\inpro{R(t,\vecm)}{\vecm\times\vecvarphi}_{D_T}
+
\mu\inpro{e^{-W(t)G}\vecP}{\vecm\times\vecvarphi}_{D_T}
=
0,
\end{multline}
for all $\vecvarphi\in L^2\big(0,T;\mH^1(D)\big)$, 
where 
$\mu=\lambda_1^2+\lambda_2^2$ and 
\begin{equation*}
 R(t,\vecm)
 =
 -
 \lambda_1^2\wtd C\big(W(t),\vecm\big)
+
 \lambda_2^2\vecm\times\big(\vecm\times \wtd C(W(t),\vecm)\big),
\end{equation*}
with $\wtd C$ defined in Lemma~\ref{lem:eC wea}.
Then $(\vecm,\vecP)$ satisfies~\eqref{InE:14} $\mP$-a.s.
%and $\vecw=\vecm\times\vecvarphi$ for
%$\vecvarphi\in\mH^1(D_T)$.
%\begin{align}\label{InE:13}
%\lambda_1\inpro{\vecm_t}{\vecvarphi}_{D_T}
%+
%\lambda_2\inpro{\vecm\times\vecm_t}{\vecvarphi}_{D_T} 
%&=
%\mu \inpro{\nabla\vecm}{\vecm\times\nabla\vecvarphi}_{D_T}
%+
%\mu\inpro{\vecm\times e^{-W(t)G}\vecP}{\vecvarphi}_{D_T}\nn\\
%&\quad
%+
%\lambda_1\inpro{F(t,\vecm)}{\vecvarphi}_{D_T}
%+
%\lambda_2
%\inpro{\vecm\times F(t,\vecm)}{\vecvarphi}_{D_T}, \, \mP\text{-a.s.},
%\end{align}
\end{lemma}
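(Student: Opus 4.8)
Here is a plan for proving Lemma~\ref{lem:4.1}.

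The plan is to obtain the Landau--Lifshitz weak form~\eqref{InE:14} from the Gilbert weak form~\eqref{InE:13} by testing~\eqref{InE:13} against a single well-chosen function built from the arbitrary test function $\vecxi$ of~\eqref{InE:14}, using the saturation constraint~\eqref{equ:m 1} throughout. The algebraic motivation: when $|\vecm|=1$, the (formal) residuals $E_{LL}$ of~\eqref{InE:14} and $E_G$ of~\eqref{InE:13} are linked by the pointwise identity $\mu E_{LL}=-\lambda_1\,\vecm\times E_G-\lambda_2\,E_G$, so that $\mu\inpro{E_{LL}}{\vecxi}_{D_T}=\lambda_1\inpro{E_G}{\vecm\times\vecxi}_{D_T}-\lambda_2\inpro{E_G}{\vecxi}_{D_T}$; rewriting $\vecxi=(\vecm\cdot\vecxi)\vecm-\vecm\times(\vecm\times\vecxi)$ and discarding the (formally vanishing, since $E_G\perp\vecm$) $\vecm$-parallel part replaces $-\lambda_2\vecxi$ by $\lambda_2\,\vecm\times(\vecm\times\vecxi)=\vecm\times(\lambda_2\,\vecm\times\vecxi)$, which identifies the correct choice
\[
\vecvarphi:=\lambda_1\vecxi+\lambda_2\,\vecm\times\vecxi .
\]
We shall then prove \emph{directly} (bypassing the distributional object $E_G$, since both formulations are written in gradient form) that, with this $\vecvarphi$, the left-hand side of~\eqref{InE:13} equals $\mu$ times the left-hand side of~\eqref{InE:14}.

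For the preliminaries we argue $\mP$-a.s., pathwise: fix $\omega$ in the full-measure set on which~\eqref{equ:m 1},~\eqref{InE:13}, $\vecm\in H^1(0,T;\mH^1(D))$ and $\vecP\in\mL^2(\wtd D_T)$ all hold. Since $\vecm\in H^1(0,T;\mH^1(D))\hookrightarrow C([0,T];\mH^1(D))$ and $|\vecm(t,\cdot)|=1$ a.e.\ in $D$ for all $t$, we get $\|\vecm(t,\cdot)\|_{\mL^\infty(D)}=1$ for every $t$ and $\sup_{[0,T]}\|\vecm(t,\cdot)\|_{\mH^1(D)}<\infty$; differentiating $|\vecm|^2\equiv1$ in $t$ and in $x$ yields the pointwise relations $\vecm\cdot\vecm_t=0$ and $\vecm\cdot\nabla\vecm=0$ a.e.\ in $D_T$. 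For $\vecxi\in L^2(0,T;\mW^{1,\infty}(D))$, the product $\vecm\times\vecxi$ lies in $\mH^1(D)$ for a.e.\ $t$ (as $\vecm$ and $\vecxi$ are bounded and $\nabla\vecm\in\mL^2$, $\nabla\vecxi\in\mL^\infty$), with $\|(\vecm\times\vecxi)(t)\|_{\mH^1(D)}\lesssim\big(1+\|\vecm(t)\|_{\mH^1(D)}\big)\|\vecxi(t)\|_{\mW^{1,\infty}(D)}$; hence $\vecvarphi\in L^2(0,T;\mH^1(D))$ and is an admissible test function in~\eqref{InE:13}. (The functions $F(t,\vecm)$ and $R(t,\vecm)$ belong to $\mL^2(D_T)$ because $\vecg\in\mW^{2,\infty}(D)$ makes $C\vecm(t)\in\mL^2(D)$ and $G,e^{\pm sG}$ are bounded on $\mL^2(D)$.)

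The main step is the term-by-term identification. Since $\vecvarphi$ enters~\eqref{InE:13} only through $\vecm\times\vecvarphi=\lambda_1\,\vecm\times\vecxi+\lambda_2\,\vecm\times(\vecm\times\vecxi)$ and through $\nabla\vecvarphi$, this is checked using only~\eqref{equ:abc} and~\eqref{equ:ele ide}, the self-adjointness of $\vecw\mapsto\vecm\times(\vecm\times\vecw)$ on $\mL^2(D)$, and the relations $|\vecm|^2=1$, $\vecm\cdot\vecm_t=0$, $\vecm\cdot\nabla\vecm=0$: the two $\vecm_t$-terms of~\eqref{InE:13} collapse to $\mu\inpro{\vecm_t}{\vecxi}_{D_T}$ (the parallel piece $(\vecm\cdot\vecxi)\vecm$ from $\vecm\times(\vecm\times\vecxi)$ is killed against $\vecm_t$, and the two $\lambda_1\lambda_2$ cross terms cancel); $-\mu\inpro{\nabla\vecm}{\vecm\times\nabla\vecvarphi}_{D_T}$ becomes $\mu\lambda_1\inpro{\vecm\times\nabla\vecm}{\nabla\vecxi}_{D_T}+\mu\lambda_2\inpro{\vecm\times\nabla\vecm}{\nabla(\vecm\times\vecxi)}_{D_T}$ by a componentwise application of~\eqref{equ:ele ide}; $\mu\inpro{e^{-W(t)G}\vecP}{\vecm\times\vecvarphi}_{D_T}$ becomes $-\mu\lambda_1\inpro{\vecm\times e^{-W(t)G}\vecP}{\vecxi}_{D_T}+\mu\lambda_2\inpro{\vecm\times(\vecm\times e^{-W(t)G}\vecP)}{\vecxi}_{D_T}$; and, inserting the definitions of $R$ and $F$ and using $\mu=\lambda_1^2+\lambda_2^2$, the term $-\inpro{R(t,\vecm)}{\vecm\times\vecvarphi}_{D_T}$ becomes $-\mu\inpro{F(t,\vecm)}{\vecxi}_{D_T}$. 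Summing, the left-hand side of~\eqref{InE:13} at $\vecvarphi$ is precisely $\mu$ times the left-hand side of~\eqref{InE:14}. As~\eqref{InE:13} forces it to vanish and $\mu=\lambda_1^2+\lambda_2^2>0$ (since $\lambda_1\neq0$), the left-hand side of~\eqref{InE:14} is $0$ for every $\vecxi\in L^2(0,T;\mW^{1,\infty}(D))$, and since $\omega$ was arbitrary in a full-measure set, \eqref{InE:14} holds $\mP$-a.s.

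The only genuinely delicate part is this term-by-term computation: the algebra is elementary but sign-sensitive, and one must (i) keep every pairing inside $\mL^2(D_T)\times\mL^2(D_T)$ — no integration by parts producing $\Delta\vecm$ is performed, both equations being in gradient form — and (ii) track the $\vecm$-parallel contributions generated by $\vecm\times(\vecm\times\vecxi)=(\vecm\cdot\vecxi)\vecm-\vecxi$, verifying that they cancel or are annihilated against $\vecm_t$ and $F(t,\vecm)$ (both pointwise orthogonal to $\vecm$), while the $e^{-W(t)G}\vecP$-term and the $R$-term are reorganised via the self-adjointness of $\vecw\mapsto\vecm\times(\vecm\times\vecw)$ and~\eqref{equ:ele ide}. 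Everything else is immediate.
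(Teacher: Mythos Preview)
Your proposal is correct and follows essentially the same overall strategy as the paper --- choose, for a given $\vecxi\in L^2(0,T;\mW^{1,\infty}(D))$, a specific test function $\vecvarphi\in L^2(0,T;\mH^1(D))$ so that plugging $\vecvarphi$ into~\eqref{InE:13} reproduces~\eqref{InE:14} at $\vecxi$ --- but the actual choice of $\vecvarphi$ differs. The paper invokes the appendix Lemma~\ref{lem:4.0} to \emph{solve} the equation
\[
\vecxi = \lambda_1\vecvarphi + \lambda_2\,\vecvarphi\times\vecm
\]
for $\vecvarphi$, and then shows term by term that the left-hand side of~\eqref{InE:14} equals that of~\eqref{InE:13} (with no multiplicative factor). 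You instead \emph{define} $\vecvarphi:=\lambda_1\vecxi+\lambda_2\,\vecm\times\vecxi$ explicitly and show that the left-hand side of~\eqref{InE:13} equals $\mu$ times that of~\eqref{InE:14}, then divide by $\mu>0$. Your route is slightly more economical: it bypasses the auxiliary existence result of Lemma~\ref{lem:4.0} entirely, since the admissibility $\vecvarphi\in L^2(0,T;\mH^1(D))$ follows directly from the product estimate you wrote down. The paper's route, on the other hand, makes the correspondence between the two formulations exact (no factor $\mu$) and mirrors the classical LL--Gilbert equivalence more transparently. The term-by-term algebra is the same in spirit in both cases, relying on~\eqref{equ:abc},~\eqref{equ:ele ide}, and the pointwise relations $\vecm\cdot\vecm_t=0$, $|\vecm|=1$.
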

%----------------------
\begin{proof}
%The result can be obtained by using the same technique as in the proof of
%~\cite[Lemma 4.3]{BNT13}.
Firstly, we observe that for each 
$\vecxi\in L^2\big(0,T;\mW^{1,\infty}(D)\big)$, due to 
Lemma~\ref{lem:4.0}, there exists 
$\vecvarphi\in L^2\big(0,T;\mH^1(D)\big)$ satisfying
\begin{equation}\label{equ:phi}
\vecxi
=
\lambda_1{\vecvarphi}
+
\lambda_2{\vecvarphi}\times\vecm.
\end{equation}
%On the other hand, for each $\vecvarphi\in L^2\big(0,T;\mH^1(D)\big)$, if
%$\vecxi$ is defined by equation~\eqref{equ:phi} then $\vecxi\in
%L^2\big(0,T;\mW^{1,\infty}(D)\big)$.

Next we derive some identities which will be used later in the proof. 
By using~\eqref{equ:abc} and noting~\eqref{equ:m 1} (so that
$\vecm\cdot\vecm_t=0$), we have 
\begin{equation}\label{equ:m mt m}
\vecm \times (\vecm \times \vecm_t) = - \vecm_t.
\end{equation}
Moreover,
\[
\vecm\times(\vecvarphi\times\vecm) 
= 
\vecvarphi - (\vecm\cdot\vecvarphi) \vecm
\quad\text{and}\quad 
\nabla\big(\vecm\times(\vecvarphi\times\vecm)\big)
= 
\nabla\vecvarphi 
- 
\nabla\big( (\vecm\cdot\vecvarphi) \vecm \big).
\]
The above identities and~\eqref{equ:nab nab} imply
\begin{equation}\label{equ:m P}
\big(
\vecm\times e^{-W(t)G}\vecP
\big)
\cdot
\big(
\vecm\times(\vecvarphi\times\vecm)
\big)
=
\big(
\vecm\times e^{-W(t)G}\vecP
\big)
\cdot
\vecvarphi
\end{equation}
and
\begin{align}\label{equ:m Del m}
(\vecm \times \nabla\vecm)
\cdot
\nabla\big(\vecm\times(\vecvarphi\times\vecm)\big)
&=
(\vecm \times \nabla\vecm)
\cdot
\nabla\vecvarphi
\nn
\\
&\quad
-
\sum_{i=1}^3
\left(
\vecm \times \frac{\pa\vecm}{\pa\ x_i}
\right)
\cdot
\left(
\frac{\pa(\vecm\cdot\vecvarphi)}{\pa x_i} \vecm
+
(\vecm\cdot\vecvarphi) \frac{\pa\vecm}{\pa x_i}
\right)
\nn
\\
&=
(\vecm \times \nabla\vecm)
\cdot
\nabla\vecvarphi,
\end{align}
where in the last step we used the elementary property
$(\veca\times\vecb)\cdot\veca = 0$ for all $\veca, \vecb\in\R^3$.

Now consider each term on the left-hand side of~\eqref{InE:14}.
By using~\eqref{equ:phi}--\eqref{equ:m Del m} and
noting~\eqref{equ:ele ide} we obtain
\begin{align*}
\inpro{\vecm_t}{\vecxi}_{D_T}
&=
\lambda_1
\inpro{\vecm_t}{\vecvarphi}_{D_T}
+
\lambda_2
\inpro{\vecm_t}{\vecvarphi\times\vecm}_{D_T}
\\
&=
-
\lambda_1
\inpro{\vecm\times(\vecm\times\vecm_t)}{\vecvarphi}_{D_T}
-
\lambda_2
\inpro{\vecm_t}{\vecm\times\vecvarphi}_{D_T}
\\
&=
\lambda_1
\inpro{\vecm\times\vecm_t}{\vecm\times\vecvarphi}_{D_T}
-
\lambda_2
\inpro{\vecm_t}{\vecm\times\vecvarphi}_{D_T},
\\
\lambda_1
\inpro{\vecm\times\nabla\vecm}{\nabla\vecxi}_{D_T}
&=
\lambda_1^2
\inpro{\vecm\times\nabla\vecm}{\nabla\vecvarphi}_{D_T}
+
\lambda_1\lambda_2
\inpro{\vecm\times\nabla\vecm}{\nabla(\vecvarphi\times\vecm)}_{D_T}
\\
&=
-
\lambda_1^2
\inpro{\nabla\vecm}{\vecm\times\nabla\vecvarphi}_{D_T}
+
\lambda_1\lambda_2
\inpro{\vecm\times\nabla\vecm}{\nabla(\vecvarphi\times\vecm)}_{D_T},
\\
\lambda_2
\inpro{\vecm\times\nabla\vecm}{\nabla(\vecm\times\vecxi)}_{D_T}
&=
\lambda_1\lambda_2
\inpro{\vecm\times\nabla\vecm}{\nabla(\vecm\times\vecvarphi)}_{D_T}
\\
&\quad
+
\lambda_2^2
\inpro{\vecm\times\nabla\vecm}%
{\nabla\big(\vecm\times(\vecvarphi\times\vecm)\big)}_{D_T}
\\
&=
-
\lambda_1\lambda_2
\inpro{\vecm\times\nabla\vecm}{\nabla(\vecvarphi\times\vecm)}_{D_T}
+
\lambda_2^2
\inpro{\vecm\times\nabla\vecm}{\nabla\vecvarphi}_{D_T}
\\
&=
-
\lambda_1\lambda_2
\inpro{\vecm\times\nabla\vecm}{\nabla(\vecvarphi\times\vecm)}_{D_T}
-
\lambda_2^2
\inpro{\nabla\vecm}{\vecm\times\nabla\vecvarphi}_{D_T},
\\
-
\inpro{F(t,\vecm)}{\vecxi}_{D_T}
&=
-
\lambda_1
\inpro{F(t,\vecm)}{\vecvarphi}_{D_T}
-
\lambda_2
\inpro{F(t,\vecm)}{\vecvarphi\times\vecm}_{D_T}
\\
&=
-
\lambda_1^2
\inpro{\vecm\times\wtd C(W(t),\vecm)}{\vecvarphi}_{D_T}
%+
%\lambda_1\lambda_2
%\inpro{\vecm\times\big(\vecm\times\wtd C(W(t),\vecm)\big)}{\vecvarphi}_{D_T}
\\
&\quad
+
\lambda_2^2
\inpro{\vecm\times\big(\vecm\times\wtd C(W(t),\vecm)\big)}%
{\vecvarphi\times\vecm}_{D_T}
\\
&=
-
\inpro{R(t,\vecm)}{\vecm\times\vecvarphi}_{D_T},
\\
-
\lambda_1
\inpro{\vecm\times e^{-W(t)G}\vecP}{\vecxi}_{D_T}
&=
%-
%\lambda_1^2
%\inpro{\vecm\times e^{-W(t)G}\vecP}{\vecvarphi}_{D_T}
%\\
%&\quad
%-
%\lambda_1\lambda_2
%\inpro{\vecm\times e^{-W(t)G}\vecP}{\vecvarphi\times\vecm}_{D_T}
%\\
%&=
\lambda_1^2
\inpro{e^{-W(t)G}\vecP}{\vecm\times \vecvarphi}_{D_T}
\\
&\quad
-
\lambda_1\lambda_2
\inpro{\vecm\times e^{-W(t)G}\vecP}{\vecvarphi\times\vecm}_{D_T},
\\
\lambda_2
\inpro{\vecm\times\big(\vecm\times e^{-W(t)G}\vecP\big)}{\vecxi}_{D_T}
%&=
%\lambda_1\lambda_2
%\inpro{\vecm\times\big(\vecm\times e^{-W(t)G}\vecP\big)}{\vecvarphi}_{D_T}
%\\
%&\quad
%+
%\lambda_2^2
%\inpro{\vecm\times\big(\vecm\times e^{-W(t)G}\vecP\big)}%
%{\vecvarphi\times\vecm}_{D_T}
%\\
&=
\lambda_1\lambda_2
\inpro{\vecm\times e^{-W(t)G}\vecP}{\vecvarphi\times\vecm}_{D_T}
\\
&\quad
-
\lambda_2^2
\inpro{\vecm\times e^{-W(t)G}\vecP}%
{\vecm\times(\vecvarphi\times\vecm)}_{D_T}
\\
&=
\lambda_1\lambda_2
\inpro{\vecm\times e^{-W(t)G}\vecP}{\vecvarphi\times\vecm}_{D_T}
\\
&\quad
-
\lambda_2^2
\inpro{\vecm\times e^{-W(t)G}\vecP}{\vecvarphi}_{D_T}
\\
&=
\lambda_1\lambda_2
\inpro{\vecm\times e^{-W(t)G}\vecP}{\vecvarphi\times\vecm}_{D_T}
\\
&\quad
+
\lambda_2^2
\inpro{e^{-W(t)G}\vecP}{\vecm\times \vecvarphi}_{D_T}.
\end{align*}
Adding the above equations side by side we deduce that the left-hand side
of~\eqref{InE:14} equals that of~\eqref{InE:13}. 
%Thus the two equations are equivalent, proving the lemma.
Thus~\eqref{InE:14} holds if~\eqref{InE:13} holds. The lemma is proved.
\end{proof}
%----------------------------------------------------------------------
%\begin{remark}\label{rem:LLL}
%We can rewrite~\eqref{InE:13} as
%\begin{align}\label{E:1.3a}
%\lambda_1\inpro{\vecm\times\vecm_t}{\vecw}_{D_T}
%-
%\lambda_2\inpro{\vecm_t}{\vecw}_{D_T} 
%&=
%\mu \inpro{\nabla\vecm}{\nabla\vecw}_{D_T}
%+
%\inpro{R(t,\vecm)}{\vecw}_{D_T}\nn\\
%&\quad
%-
%\mu\inpro{e^{W(t)G}\vecP}{\vecw}_{D_T},
%\end{align}
%where 
%\begin{equation*}
% R(t,\vecm)
% =
% \lambda_2^2\vecm\times(\vecm\times \wtd C(W(t),\vecm(t,\cdot)))
% -
% \lambda_1^2\wtd C(W(t),\vecm(t,\cdot)),
%\end{equation*}
%and $\vecw=\vecm\times\vecvarphi$ for
%$\vecvarphi\in\mH^1(D_T)$.
%It is noted that
%$\vecw\cdot\vecm=0$. This property will be exploited later
%in the design of the finite element scheme.
%\end{remark}
%In the remainder of this section we state the definition of a weak solution
%to~\eqref{InE:13} and~\eqref{wE:vecP}.
%----------------------------------------------
Thanks to Lemma~\ref{lem:we mP} and Lemma~\ref{lem:4.1}, in order to
solve~\eqref{E:1.1}--\eqref{E:Maxwell2}, we solve~\eqref{InE:13}
and~\eqref{wE:vecP}. It is therefore necessary to define the weak martingale
solutions for these two latter equations.
\begin{definition}\label{def:wea solm}
Given $T\in(0,\infty)$, a weak martingale solution
to~\eqref{InE:13} and~\eqref{wE:vecP} on the time interval $[0,T]$,
denoted by
$(\Omega,\cF,(\cF_t)_{t\in[0,T]},\mP,W,\vecm,\vecP)$,
consists of 
\begin{enumerate}
\renewcommand{\labelenumi}{(\alph{enumi})}
\item
a filtered probability space
$(\Omega,\cF,(\cF_t)_{t\in[0,T]},\mP)$ with the
filtration satisfying the usual conditions,
\item
a one-dimensional $(\cF_t)$-adapted Wiener process
$W=(W_t)_{t\in[0,T]}$,
\item 
a progressively measurable
process $\vecm : [0,T]\times\Omega \goto \mL^2(D)$,
\item
a progressively measurable
process $\vecP : [0,T]\times\Omega \goto \mL^2(\wtd D)$,
\end{enumerate}
such that there hold
\begin{enumerate}
\item
\quad
$\vecm \in \mH^1(D_T)$, $\mP$-a.s.;
\item
\quad
$\vecP \in L^2(0,T;\mH(\curl;\wtd D))$, $\mP$-a.s.;
\item
\quad
$\mE\left( 
\esssup_{t\in[0,T]}\|\nabla\vecm(t)\|^2_D
\right) < \infty$;
\item
\quad
$|\vecm(t,\cdot)| = 1$
for all $t \in [0,T]$, 
a.e. in $D$, 
and $\mP$-a.s.;
\item
\quad
$(\vecm,\vecP)$ satisfies~\eqref{InE:13}
and~\eqref{wE:vecP} $\mP$-a.s.
\end{enumerate}
\end{definition}
%------------------
We state the following lemma which is a direct consequence of
Lemma~\ref{lem:we mP}, Lemma~\ref{lem:4.1},
and statement~\eqref{equ:m M 1}.
\begin{lemma}\label{lem:equi}
If $(\vecm,\vecP)$ is a weak martingale solution of~\eqref{InE:13} 
and~\eqref{wE:vecP} in the
sense of Definition~\ref{def:wea solm}, then $(\vecM,\vecH)$ is a weak
martingale solution of~\eqref{E:1.1} and~\eqref{E:Maxwell2} in the sense of
Definition~\ref{def:wea sol}.
\end{lemma}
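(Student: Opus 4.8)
The strategy is to apply the two equivalence lemmas in succession and then to check that the reconstructed pair $(\vecM,\vecH)$ carries the regularity required by Definition~\ref{def:wea sol}. Let $(\vecm,\vecP)$ be a weak martingale solution of \eqref{InE:13} and \eqref{wE:vecP} in the sense of Definition~\ref{def:wea solm}. Then $|\vecm(t,\cdot)|=1$ for all $t$, $\mP$-a.s., the process $\vecm$ has the regularity $\vecm\in\mH^1(D_T)$ with $\mE\big(\esssup_{t}\|\nabla\vecm(t)\|_D^2\big)<\infty$, $\vecP\in L^2(0,T;\mH(\curl;\wtd D))$, $\mP$-a.s., and $(\vecm,\vecP)$ solves \eqref{InE:13} and \eqref{wE:vecP} $\mP$-a.s.; these are exactly the hypotheses of Lemma~\ref{lem:4.1}, so $(\vecm,\vecP)$ also satisfies \eqref{InE:14} $\mP$-a.s. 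Consequently $(\vecm,\vecP)$ fulfils the hypotheses of Lemma~\ref{lem:we mP}, and therefore $\vecM:=e^{W(t)G}\vecm$ and $\vecH:=\vecP-\wtd\vecM$ satisfy \eqref{wE:1.1}--\eqref{wE:Maxwell2} $\mP$-a.s., which are conditions~(5) and~(6) of Definition~\ref{def:wea sol}.

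It remains to verify the remaining items of Definition~\ref{def:wea sol}. The filtered probability space and the Wiener process are carried over unchanged, giving (a) and (b), while condition~(4), namely $|\vecM(t,\cdot)|=1$ for all $t$, $\mP$-a.s., follows at once from \eqref{equ:m M 1} and the corresponding property of $\vecm$. For the progressive measurability of $\vecM$ I would invoke the explicit formula \eqref{equ:G8}: since $G$ is bounded on $\mL^2(D)$, the map $(s,\vecu)\mapsto \vecu+(\sin s)\,G\vecu+(1-\cos s)\,G^2\vecu$ is continuous from $\R\times\mL^2(D)$ into $\mL^2(D)$, so composing it with the $(\cF_t)$-adapted process $W$ and the progressively measurable process $\vecm$ shows that $\vecM$ is progressively measurable; then $\vecH=\vecP-\wtd\vecM$ is progressively measurable as well, which gives (c) and (d).

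For condition~(3), differentiating \eqref{equ:G8} and using $\frac{\pa}{\pa x_i}(G\vecu)=\frac{\pa\vecu}{\pa x_i}\times\vecg+\vecu\times\frac{\pa\vecg}{\pa x_i}$ (and the analogous estimate for $G^2$), one obtains $\|\nabla\vecM(t)\|_D\le c\,\big(\|\nabla\vecm(t)\|_D+\|\vecm(t)\|_D\big)$ with $c$ depending only on $\|\vecg\|_{\mW^{1,\infty}(D)}$; since $|\vecm(t,\cdot)|=1$ forces $\|\vecm(t)\|_D^2=|D|$, taking the essential supremum in $t$ and then the expectation yields $\mE\big(\esssup_{t}\|\nabla\vecM(t)\|_D^2\big)<\infty$. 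For condition~(1), $\vecm\in\mH^1(D_T)\hookrightarrow C([0,T];\mL^2(D))$ together with the operator-norm continuity of $t\mapsto e^{W(t)G}$ (again read off from \eqref{equ:G8} and the continuity of $W$) shows that $\vecM\in C([0,T];\mL^2(D))\subset C([0,T];\mH^{-1}(D))$. Finally, for condition~(2), $\vecH=\vecP-\wtd\vecM$ with $\vecP\in L^2(0,T;\mH(\curl;\wtd D))$, and the curl of the zero extension $\wtd\vecM$ is handled exactly as in Step~2 of the proof of Lemma~\ref{lem:we mP}: the condition $(\nabla\times\veczeta)\times\vecn_D=0$ on $\partial D$ built into the test space $\CX$ removes the interface contribution on $\partial D$, so that $\nabla\times\wtd\vecM$ is identified with the zero extension of $\nabla\times\vecM\in\mL^2(D)$, whence $\vecH\in L^2(0,T;\mH(\curl;\wtd D))$. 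I expect this last point --- reconciling the curl of $\vecH$ over the whole cavity $\wtd D$ with the curls of $\vecP$ on $\wtd D$ and of $\vecM$ on the ferromagnet $D$, i.e. checking that passing to the zero extension creates no spurious surface term on $\partial D$ --- to be the only delicate step; everything else is routine bookkeeping.
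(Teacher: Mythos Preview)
Your approach is precisely the paper's: the lemma is stated there as ``a direct consequence of Lemma~\ref{lem:we mP}, Lemma~\ref{lem:4.1}, and statement~\eqref{equ:m M 1}'', with no further argument given. You follow the same chain (Lemma~\ref{lem:4.1} $\Rightarrow$ \eqref{InE:14}, then Lemma~\ref{lem:we mP} $\Rightarrow$ \eqref{wE:1.1}--\eqref{wE:Maxwell2}, and \eqref{equ:m M 1} for the unit-sphere constraint), but go further by explicitly checking the remaining regularity and measurability items of Definition~\ref{def:wea sol}, which the paper leaves implicit. Your flag on condition~(2) --- whether $\wtd\vecM\in L^2(0,T;\mH(\curl;\wtd D))$ given only $\vecM\in\mH^1(D)$ --- is a fair caveat that the paper does not address either; your observation that the test-space constraint $(\nabla\times\veczeta)\times\vecn_D=0$ on $\partial D$ is what makes the weak formulation \eqref{wE:Maxwell2} consistent is the right way to read it.
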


In the next section, we
present a finite element scheme to approximate the solutions of~\eqref{InE:13}
and~\eqref{wE:vecP}.
%-----------------------------------------------------------------
\section{The finite element scheme}\label{sec:fin ele}
%------------------------------------------------------------------
 In this section we introduce the $\theta$-linear finite element scheme 
 which approximates a weak solution $(\vecm,\vecP)$ defined in Definition
 ~\ref{def:wea solm}.
 
Let $\mT_h$ be a regular tetrahedrization of the domain~$\wtd D$ into
tetrahedra of maximal mesh-size~$h$. Let~$\mT_h|_D$ be  its
restriction to~$D\subset\wtd D$. We denote by~$\cN_h :=
\{\vecx_1,\ldots,\vecx_N\}$ the set of vertices in~$\mT_h|_D$ and
by~$\cM_h :=\{ \vece_1, \ldots , \vece_M \}$ the set of edges in~$\mT_h$.
 
 To discretize the equation~\eqref{InE:13}, we
 introduce the finite element space
 $\mV_h\subset\mH^1(D)$ defined by
\[
\mV_h:=
\left\{\vecu\in\mH^1(D) : 
\vecu|_K\in\big(P_1|_K\big)^3 \quad\forall K\in\mT_K\right\},
\]
where~$P_1$ is the set of polynomials of maximum total degree~$1$ 
in~$x_1,x_2,x_3$.
A basis for~$\mV_h$ can be chosen to 
be~$\{\phi_n\vecxi_1,\phi_n\vecxi_2,\phi_n\vecxi_3\}_{1\leq n\leq N}$, 
where~$\phi_n$ is a continuous piecewise linear function on~$\mT_h$ 
satisfying~$\phi_n(\vecx_m)=\delta_{n,m}$ (the Kronecker delta)
and~$\{\vecxi_j\}_{j=1,\cdots,3}$ is the canonical basis for~$\R^3$. 
The interpolation operator from
$\C^0(D)$ onto  $\mV_h$ is defined by
\[
I_{\mV_h}(\vecv)=\sum_{n=1}^N \vecv(\vecx_n)\phi_n(\vecx)
\quad\forall \vecv\in \mathbb C^0(D,\mathbb R^3) .
\]
 
To discretize~\eqref{wE:vecP}, we 
introduce the lowest order edge elements of N\'{e}d\'{e}lec's first
family (see~\cite{Monk03}) defined by
\[
\mY_h:=\left\{
\vecu\in\mH(\curl;\wtd D) : \vecu|_{K}\in\cD_K \quad \forall K\in\mT_h
\right\},
\]
where 
\[
\cD_K
:=
\left\{
\vecv:K\goto\R^3 \, : \,
\exists \veca,\vecb\in\R^3 \text{ such that }
\vecv(\vecx)=\veca+\vecb\times\vecx \quad
\forall \vecx\in K
\right\}.
\]
A basis $\{\vecpsi_1,\ldots,\vecpsi_M\}$ of $\mY_h$ can be defined by
\begin{equation*}\label{equ:psiq}
\int_{\vece_p}\vecpsi_q\cdot\vectau_p\ds 
= 
\delta_{q,p}\, ,
\end	{equation*}
where $ \vectau_p $ is the unit vector in the direction of edge
$\vece_p$. For any $\delta>0$ and $p>2$, the interpolation operator $I_{\mY_h}$
from $\mH^{1/2+\delta}(\wtd D)\cap\mW^{1,p}(\wtd D)$ onto $\mY_h$
is defined by
\[
I_{\mY_h}(\vecu)=\sum_{q=1}^M u_q \vecpsi_q
\quad\forall \vecu\in  \mH^{1/2+\delta}(\wtd D)\cap\mW^{1,p}(\wtd D),
\]
where
$
u_q=\int_{\vece_q}\vecu\cdot\vectau_q\ds.
$

Before introducing our approximation scheme,
we state the following result, proved in~\cite{Bart05}, 
which will be used in the analysis.
\begin{lemma}\label{lem:bar}
If there holds
\begin{equation}\label{E:CondTe}
\int_D \nabla\phi_i\cdot\nabla\phi_j\dvx \leq 0
\quad\text{for all}\quad i,j \in \{1,2,\cdots,N\}\text{ and
} i\not= j ,
\end{equation}
then for all $\vecu\in\mV_h$ satisfying
$|\vecu(\vecx_l)|\geq 1$, $ l=1,2,\cdots,N$, there holds
\begin{equation}\label{E:InE}
\int_D\left|\nabla
I_{\mV_h}\left(\frac{\vecu}{|\vecu|}\right)\right|^2\dvx
\leq
\int_D|\nabla\vecu|^2\dvx.
\end{equation}
\end{lemma}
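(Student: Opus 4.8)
The plan is to reduce the vectorial statement to the well-known scalar ``acute-type'' estimate for nodal interpolation of the normalisation map, which is exactly the content of the hypothesis \eqref{E:CondTe}. First I would write $\vecu = \sum_{l=1}^N \vecu(\vecx_l)\phi_l$ and set $\vecv := I_{\mV_h}(\vecu/|\vecu|)$, so that $\vecv = \sum_{l=1}^N \vecv_l \phi_l$ with $\vecv_l = \vecu(\vecx_l)/|\vecu(\vecx_l)|$, noting that $|\vecv_l| = 1$ and, by hypothesis, $|\vecu(\vecx_l)| \ge 1$ for every $l$. Since the basis functions $\phi_l$ sum to $1$ on each tetrahedron, the gradient of a piecewise-linear function with nodal values $\vecw_l$ can be expressed, on a fixed $K$, purely through the differences $\vecw_i - \vecw_j$; more precisely one has the standard identity
\[
\int_D |\nabla \vecw|^2 \dvx
=
-\sum_{i<j} \Big(\int_D \nabla\phi_i\cdot\nabla\phi_j\dvx\Big)\,|\vecw_i-\vecw_j|^2,
\]
valid for any $\vecw = \sum_l \vecw_l \phi_l \in \mV_h$. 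This identity is the crux: it turns both sides of \eqref{E:InE} into sums of the same nonnegative weights $-\int_D\nabla\phi_i\cdot\nabla\phi_j\dvx \ge 0$ against the squared nodal increments.

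Given this identity, the inequality \eqref{E:InE} follows term by term once we show, for each pair $i\neq j$,
\[
\Big|\frac{\vecu(\vecx_i)}{|\vecu(\vecx_i)|} - \frac{\vecu(\vecx_j)}{|\vecu(\vecx_j)|}\Big|
\le
\big|\vecu(\vecx_i) - \vecu(\vecx_j)\big|.
\]
This is the elementary fact that, for vectors $\veca,\vecb\in\R^3$ with $|\veca|,|\vecb|\ge 1$, one has $\big|\,\veca/|\veca| - \vecb/|\vecb|\,\big| \le |\veca - \vecb|$: indeed, writing $|\veca|\ge|\vecb|$ without loss of generality and using
$\veca/|\veca| - \vecb/|\vecb| = (\veca-\vecb)/|\veca| + \vecb\,(1/|\veca| - 1/|\vecb|)$ gives, after taking norms and using $|\vecb|\ge 1$ together with $\big||\veca|-|\vecb|\big|\le|\veca-\vecb|$, the bound $|\veca-\vecb|/|\veca| + (|\veca|-|\vecb|)/|\veca| \le |\veca-\vecb|$. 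Substituting $\veca = \vecu(\vecx_i)$, $\vecb = \vecu(\vecx_j)$, multiplying by the nonnegative weight $-\int_D\nabla\phi_i\cdot\nabla\phi_j\dvx$, and summing over all pairs $i<j$ yields exactly \eqref{E:InE}.

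The only genuinely delicate point is establishing the gradient identity displayed above; everything else is the two short pointwise computations just indicated. To prove the identity I would argue locally on each $K\in\mT_h|_D$: since $\sum_l \phi_l \equiv 1$ on $K$ we have $\sum_l \nabla\phi_l = 0$ there, so $\nabla\vecw = \sum_l \vecw_l\nabla\phi_l = \sum_l(\vecw_l - \vecw_{l_0})\nabla\phi_l$ for any fixed reference node $l_0$, and expanding $|\nabla\vecw|^2 = \sum_{i,j}(\vecw_i-\vecw_{l_0})\cdot(\vecw_j-\vecw_{l_0})\,\nabla\phi_i\cdot\nabla\phi_j$ and re-summing using $\sum_i\nabla\phi_i = 0$ collapses the reference node and produces $-\sum_{i<j}(\nabla\phi_i\cdot\nabla\phi_j)|\vecw_i-\vecw_j|^2$ on $K$; summing over $K$ gives the global identity. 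Alternatively, and more cleanly, this is precisely \cite{Bart05}, which the statement already invokes, so I would simply cite it. With \eqref{E:CondTe} ensuring the weights are nonnegative, the proof is complete.
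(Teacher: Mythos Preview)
The paper does not prove this lemma at all; it merely states the result and cites \cite{Bart05}. Your proof sketch is precisely the argument in Bartels' paper: rewrite the Dirichlet energy via the off-diagonal stiffness entries using the partition-of-unity identity, and reduce to the pointwise Lipschitz estimate for the radial projection outside the unit ball. The overall strategy is correct and coincides with the original source.

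There is, however, a genuine slip in your justification of the pointwise inequality $\bigl|\veca/|\veca|-\vecb/|\vecb|\bigr|\le|\veca-\vecb|$ for $|\veca|,|\vecb|\ge1$. Your triangle-inequality decomposition produces the intermediate bound $|\veca-\vecb|/|\veca|+(|\veca|-|\vecb|)/|\veca|$, but this quantity is \emph{not} in general dominated by $|\veca-\vecb|$: take $\veca=(3/2,0,0)$ and $\vecb=(1,0,0)$, so that $|\veca-\vecb|=1/2$ while your intermediate bound equals $2/3$. The inequality you want is nevertheless true; a clean proof is to square both sides. With $a=|\veca|$, $b=|\vecb|$, $s=\veca\cdot\vecb$ one has
\[
\Bigl|\frac{\veca}{a}-\frac{\vecb}{b}\Bigr|^2=2-\frac{2s}{ab},
\qquad
|\veca-\vecb|^2=a^2+b^2-2s,
\]
and the desired inequality rearranges to $2s(ab-1)/(ab)\le a^2+b^2-2$. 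Since $a,b\ge1$ gives $ab\ge1$, and $s\le ab$ by Cauchy--Schwarz, the left side is at most $2(ab-1)\le a^2+b^2-2$. With this correction your argument is complete.
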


When $d=2$, condition~\eqref{E:CondTe} holds for Delaunay
triangulations. When $d=3$, it holds if all dihedral angles of
the tetrahedra in $\mT_h|_D$ are less than or equal
to $\pi/2$; see~\cite{Bart05}.
In the sequel we assume that~\eqref{E:CondTe} holds.

With the finite element spaces defined as above, we are
ready to define our approximation scheme.
Fixing a positive integer $J$, we choose the time step
$k$ to be $k=T/J$ and define $t_j=jk$, $j=0,\cdots,J$.  For $j=1,2,\ldots,J$,
the functions $\vecm (t_j,\cdot)$ and $\vecP(t_j,\cdot)$ are 
approximated by $\vecm^{(j)}_h\in\mV_h$ and $\vecP_h^{(j)}\in\mY_h$, 
respectively. 
If~$\vecv_h^{(j)}$ is an approximation of~$\vecm_t(t_j,\cdot)$, then
since
\[
\vecm_t(t_j,\cdot)
\approx
\frac{\vecm(t_{j+1},\cdot)-\vecm(t_j,\cdot)}{k}
\approx
\frac{\vecm_h^{(j+1)}-\vecm_h^{(j)}}{k},
\]
we can define $\vecm_h^{(j+1)}$ from $\vecm_h^{(j)}$ by
\begin{equation}\label{equ:mjp1}
\vecm_h^{(j+1)}
:=
\vecm_h^{(j)} + k \vecv_h^{(j)},
\end{equation}
To maintain the condition $|\vecm_h^{(j+1)}|=1$, we
normalise
the right-hand side of~\eqref{equ:mjp1} and therefore
define $\vecm_h^{(j+1)}$ belonging to $\mV_h$ by
\[
\vecm_h^{(j+1)}
=
I_{\mV_h} \left(
\frac{\vecm_h^{(j)} + k \vecv_h^{(j)}}
{|\vecm_h^{(j)} + k \vecv_h^{(j)}|}
\right)
=
\sum_{n=1}^N
\frac{\vecm_h^{(j)}(\vecx_n)+k\vecv_h^{(j)}(\vecx_n)}
{\left|\vecm_h^{(j)}(\vecx_n)+k\vecv_h^{(j)}(\vecx_n)\right|}
\phi_n,
\]
which ensures that $|\vecm_h^{(j+1)}|=1$ at vertices.
Hence it suffices to propose a scheme to compute~$\vecv_h^{(j)}$.

We first rewrite~\eqref{InE:13} as
\begin{align}\label{equ:mt w}
\lambda_2\inpro{\vecm_t}{\vecw}_{D_T} 
&-
\lambda_1\inpro{\vecm\times\vecm_t}{\vecw}_{D_T}
+
\mu \inpro{\nabla\vecm}{\nabla\vecw}_{D_T}
\nn\\
&=
-
\inpro{R(t,\vecm)}{\vecw}_{D_T}
+
\mu\inpro{e^{-W(t)G}\vecP}{\vecw}_{D_T}
\end{align}
where $\vecw=\vecm\times\vecvarphi$.
Then, noting that $\vecm_t\cdot\vecm = 0$ (which follows from
$\snorm{\vecm}{}=1$) and $\vecw\cdot\vecm = 0$, we can design a
Galerkin method in which the unknown $\vecv_h^{(j)}$ and the test
function $\vecw_h$ reflect the above property.
Hence we follow~\cite{Alo08, AloJai06} to define
\begin{equation*}%\label{SpaceW}
 \mW_h^{(j)}
 :=
 \left\{\vecw\in \mV_h \mid 
 \vecw(\vecx_n)\cdot\vecm_h^{(j)}(\vecx_n)=0,
 \ n = 1,\ldots,N \right\},
\end{equation*}
and we will seek~$\vecv_h^{(j)}$ in this space. It remains to
approximate the other terms in~\eqref{equ:mt w}.

Considering the piecewise constant approximation $W_k(t)$
of $W(t)$, namely,
\begin{equation}\label{Def:W}
W_k(t)=W(t_j),\quad t\in[t_j,t_{j+1}),
\end{equation}
we define
\begin{align}
\vecg_h
&:=
I_{\mV_h}(\vecg),
\nn\\
G_h\vecu 
&:= 
\vecu\times\vecg_h
\quad \forall\vecu\in\mV_h\cup\mY_h,
\nn\\
e^{ W_k(t)G_h}\vecu
&:= 
\vecu
+
(\sin W_k(t))G_h\vecu
+
(1-\cos W_k(t))G_h^2\vecu
\quad \forall\vecu\in\mV_h\cup\mY_h,\label{def:eh}
\\
C_h(\vecu)
&:=
\vecu\times I_{\mV_h}(\Delta\vecg)
+
2\nabla\vecu\times I_{\mV_h}(\nabla\vecg)
\quad \forall\vecu\in\mV_h,
\nn
\\
D_{h,k}(t,\vecu)
&=
\Big(
\big(\sin W_k(t)\big) C_h + \big(1-\cos W_k(t)\big)(G_hC_h+C_hG_h) 
\Big)\vecu \label{equ:Dhk} 
\\
\wtd C_{h,k}(t,\vecu)
&=
\Big(I-\sin W_k(t) G_h + (1-\cos
W_k(t))G_h^2\Big)D_{h,k}(t,\vecu),  \label{equ:Chk}
\\
R_{h,k}(t,\vecu)
&=
\lambda_2^2 \vecu\times(\vecu\times
\wtd C_{h,k}(t,\vecu))
-
\lambda_1^2 \wtd C_{h,k}(t,\vecu). \label{equ:Fk}
\end{align}
We can now discretise~\eqref{equ:mt w} as: For some $\theta\in[0,1]$,
find $\vecv_h^{(j)}\in\mW_h^{(j)}$ satisfying
\begin{align}\label{disE:LLG}
\lambda_2
\inpro{\vecv_h^{(j)}}{\vecw_h^{(j)}}_{D}
&
-
\lambda_1
\inpro{\vecm_h^{(j)}\times\vecv_h^{(j)}}{\vecw_h^{(j)}}_{D}
+
\mu 
\inpro{\nabla (\vecm_h^{(j)}+k\theta \vecv_h^{(j)})}%
{\nabla\vecw_h^{(j)}}_{D} 
\nn
\\
&=
-
\inpro{R_{h,k}(t_j,\vecm_h^j)}{\vecw_h}_{D}
+
\mu
\inpro{e^{-W_k(t_j)G_h}\vecP^{(j)}_h}{\vecw_h^{(j)}}_{D}
\quad\forall \vecw_h^{(j)}\in\mW_h^{(j)}.
\end{align}

To discretise~\eqref{wE:vecP}, even though $\vecP$ is not time
differentiable we formally use integration by parts to
bring the time derivative to~$\vecP$, and thus with $d_t\vecP^{(j+1)}_h$
defined by
\[
d_t\vecP^{(j+1)}_h
:=k^{-1}\big(\vecP^{(j+1)}_h-\vecP^{(j)}_h\big),
\]
the discretisation of~\eqref{wE:vecP} reads:
%Having computed~$\vecm_h^{(j)}$ and~$\vecP_h^{(j)}$, we 
Compute~$\vecP_h^{(j+1)}\in\mY_h$ by solving
\begin{align}\label{disE:NewMax2}
 \mu_0\inpro{d_t\vecP^{(j+1)}_h}{\veczeta_h}_{\wtd D}
+
 \inpro{\sigma\nabla\times\vecP^{(j+1)}_h}
 {\nabla\times\veczeta_h}_{\wtd D}
=
 \sigma_D\inpro{\nabla\times\big(e^{W_k(t_{j})G_h}\vecm^{(j)}_h\big)}
 {\nabla\times\veczeta_h}_{D}
 \quad\forall \veczeta_h\in\mY_h.
\end{align}

%-------------------------------
We summarise the above procedure in the following algorithm.
\begin{algorithm}\label{Algo:1}
\begin{description}
\mbox{}
\item[Step 1]
Set $j=0$.
Choose $\vecm^{(0)}_h=I_{\mV_h}\vecm_0$ and
$\vecP^{(0)}_h=I_{\mY_h}\vecP_0$.
\item[Step 2] \label{A:2}
Solve~\eqref{disE:LLG} and~\eqref{disE:NewMax2} to
find $(\vecv_h^{(j)},\vecP^{(j+1)}_h)\in \mW_h^{(j)}\times\mY_h$.
\item[Step 3] \label{A:4}
Define
\begin{equation*}
\vecm_h^{(j+1)}(\vecx)
:=
\sum_{n=1}^N
\frac{\vecm_h^{(j)}(\vecx_n)+k\vecv_h^{(j)}(\vecx_n)}
{\left|\vecm_h^{(j)}(\vecx_n)+k\vecv_h^{(j)}(\vecx_n)\right|}
\phi_n(\vecx).
\end{equation*}
\item[Step 4] 
Set $j=j+1$ and 
and return to Step $2$ if $j<J$. Stop if $j=J$.
\end{description}
\end{algorithm}
%----------------------------------------------------------------
By the Lax--Milgram theorem, for each $j>0$ 
there exists a unique solution 
$(\vecv_h^{(j)},\vecP^{(j+1)}_h)\in \mW_h^{(j)}\times\mY_h$ 
of equations~\eqref{disE:LLG}--\eqref{disE:NewMax2}.
Since $\left|\vecm_h^{(0)}(\vecx_n)\right|=1$ and 
$\vecv_h^{(j)}(\vecx_n)\cdot\vecm_h^{(j)}(\vecx_n)=0$ for all
$n=1,\ldots,N$ and $j=0,\ldots,J$, there hold (by induction)
\begin{equation}\label{equ:mhj 1}
\left |\vecm_h^{(j)}(\vecx_n)+k\vecv_h^{(j+1)}(\vecx_n)\right| \ge 1
\quad\text{and}\quad
\left |\vecm_h^{(j)}(\vecx_n)\right |=1,
\quad j = 0,\ldots,J.
\end{equation}
In particular, the above inequality shows that Step 3 of
the algorithm is well defined.

We finish this section by proving the following lemmas
concerning boundedness of
$\vecm_h^{(j)}$, $\vecP_h^{(j)}$ and $R_{h,k}$.
\begin{lemma}\label{lem:mhj}
For any $j=0,\ldots,J$ there hold
\[
\norm{\vecm_h^{(j)}}{\mL^{\infty}(D)} \le 1
\quad\text{and}\quad 
\norm{\vecm_h^{(j)}}{D} \le |D|,
\]
where $|D|$ denotes the measure of $D$.
\end{lemma}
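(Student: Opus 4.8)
The plan is to exploit the explicit nodal formula for $\vecm_h^{(j)}$ coming from Step 3 of Algorithm~\ref{Algo:1}, namely that $\vecm_h^{(j)}\in\mV_h$ is the piecewise-linear interpolant whose nodal values are unit vectors (this is exactly the content of~\eqref{equ:mhj 1}). First I would recall that since $\vecm_h^{(j)}\in\mV_h$, it is linear on each tetrahedron $K\in\mT_h|_D$, hence for $\vecx\in K$ we may write $\vecm_h^{(j)}(\vecx)=\sum_{n}\phi_n(\vecx)\vecm_h^{(j)}(\vecx_n)$, where the sum runs over the four vertices of $K$; because the barycentric coordinates $\phi_n$ are nonnegative on $K$ and sum to $1$, the value $\vecm_h^{(j)}(\vecx)$ is a convex combination of the nodal unit vectors $\vecm_h^{(j)}(\vecx_n)$.

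The main step is then the pointwise bound: by convexity of the Euclidean norm (triangle inequality), $|\vecm_h^{(j)}(\vecx)|\le\sum_{n}\phi_n(\vecx)\,|\vecm_h^{(j)}(\vecx_n)|=\sum_{n}\phi_n(\vecx)=1$ for every $\vecx\in K$, using $|\vecm_h^{(j)}(\vecx_n)|=1$ from~\eqref{equ:mhj 1}. Since this holds on every element $K$, we get $\norm{\vecm_h^{(j)}}{\mL^{\infty}(D)}\le 1$, which is the first assertion. The second assertion follows immediately by integrating: $\norm{\vecm_h^{(j)}}{D}^2=\int_D|\vecm_h^{(j)}(\vecx)|^2\dvx\le\int_D 1\dvx=|D|$, so $\norm{\vecm_h^{(j)}}{D}\le|D|^{1/2}\le|D|$ when $|D|\ge 1$; more naturally one simply states $\norm{\vecm_h^{(j)}}{D}\le|D|^{1/2}$, but since the lemma as stated asserts $\le|D|$ I would either invoke $|D|\ge 1$ (a harmless normalisation, as $D$ can be rescaled) or note the cruder estimate $\norm{\vecm_h^{(j)}}{D}\le|D|^{1/2}\le\max(1,|D|)$; in any case the bound claimed follows from the $\mL^\infty$ estimate.

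I do not anticipate any genuine obstacle here: the only slightly delicate point is making sure that $\vecm_h^{(j)}(\vecx)$ really is a \emph{convex} combination of its nodal values, which relies on $\mV_h$ being the $\mathbb P_1$ Lagrange space on a simplicial mesh so that the shape functions $\phi_n$ restricted to a single element are the barycentric coordinates (nonnegative, partition of unity). This is standard for the tetrahedrization $\mT_h|_D$ introduced in Section~\ref{sec:fin ele}. The induction establishing $|\vecm_h^{(j)}(\vecx_n)|=1$ at all vertices and all time levels has already been carried out in~\eqref{equ:mhj 1}, so the present lemma is essentially a one-line corollary: interpolation of unit nodal data by $\mathbb P_1$ basis functions yields a function bounded by $1$ in $\mL^\infty$, hence by $|D|^{1/2}$ (and a fortiori by $|D|$ under the tacit normalisation) in $\mL^2(D)$.
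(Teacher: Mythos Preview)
Your argument is correct and is exactly the paper's approach: the paper's proof simply says the first bound follows from~\eqref{equ:mhj 1} and the second by integrating over~$D$, which is precisely the convex-combination-of-unit-nodal-values reasoning you spell out. Your observation that integration actually yields the sharper bound $\norm{\vecm_h^{(j)}}{D}\le|D|^{1/2}$ is accurate; the~$|D|$ in the statement is just a (harmless) slack in the paper's formulation.
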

\begin{proof}
The first inequality follows from~\eqref{equ:mhj 1} and the
second can be obtained by integrating over $D$.
\end{proof}
\begin{lemma}\label{lem:Fk}
Assume that $\vecg$ satisfies~\eqref{equ:g 1} and $\vecg\in\mW^{2,\infty}(D)$.
There exists a deterministic constant $c$ depending only on $\vecg$
such that, for any $j=0,\cdots,J$, there holds $\mP\text{-a.s.}$,
\begin{align}
\left\|R_{h,k}(t_j,\vecm_h^{(j)})\right\|_D^2
&\leq
c + 
c\left \| \nabla \vecm_h^{(j)} \right \| _D^2,\label{equ:boundFk}\\
\|e^{-W_k(t_j)G_h}\vecu\|^2_D
&\leq
\|\vecu\|^2_D\quad\forall\vecu\in\mL^2(D),\label{equ:boundP}\\
\|\nabla\times\big(e^{W_k(t_j)G_h}\vecm^{(j)}_h\big)\|_D^2
&\leq
c
+
c\|\nabla\vecm^{(j)}_h\|_D^2.\label{equ:boundcurlP}
\end{align}
\end{lemma}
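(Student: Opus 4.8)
The strategy is to trace through the definitions \eqref{equ:Dhk}--\eqref{equ:Fk} and \eqref{def:eh}, bounding each building block in terms of $\|\nabla\vecm_h^{(j)}\|_D$ and a deterministic constant depending only on $\vecg$, using crucially that $|\sin W_k(t_j)|\le 1$ and $|1-\cos W_k(t_j)|\le 2$ (so that the random argument $W_k(t_j)$ never contributes an unbounded factor), together with the nodal bound $\|\vecm_h^{(j)}\|_{\mL^\infty(D)}\le 1$ from Lemma~\ref{lem:mhj}.

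First I would record the two elementary facts I will use repeatedly: (i) $\|G_h\vecu\|_D = \|\vecu\times\vecg_h\|_D \le \|\vecg_h\|_{\mL^\infty(D)}\|\vecu\|_D \le c\|\vecu\|_D$, and similarly $\|G_h\vecu\|_{\mL^\infty}\le c\|\vecu\|_{\mL^\infty}$, with $c$ depending on $\|I_{\mV_h}\vecg\|_{\mL^\infty}\le\|\vecg\|_{\mL^\infty}$; and (ii) the cross-product gradient estimate $\|\nabla(\vecu\times\vecv)\|_D \le \|\vecu\|_{\mL^\infty}\|\nabla\vecv\|_D + \|\nabla\vecu\|_{\mL^\infty}\cdot\text{(something)}$ — here it is cleanest to use that $G_h$ maps $\mV_h$ to $\mV_h$ and estimate $\|\nabla(G_h\vecu)\|_D = \|\nabla(\vecu\times\vecg_h)\|_D \le \|\vecg_h\|_{\mL^\infty}\|\nabla\vecu\|_D + \|\nabla\vecg_h\|_{\mL^\infty}\|\vecu\|_D$, where $\|\nabla\vecg_h\|_{\mL^\infty}=\|\nabla I_{\mV_h}\vecg\|_{\mL^\infty}\le c\|\vecg\|_{\mW^{1,\infty}}$ by stability of nodal interpolation on each element. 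For \eqref{equ:boundP}, since $e^{-W_k(t_j)G_h}$ is, by \eqref{def:eh} and Lemma~\ref{lem:G2}-type algebra, formally the exponential of the skew-adjoint-like operator $-W_k G_h$; but more directly I would note that $G_h$ is (pointwise a.e.) $\vecu\mapsto\vecu\times\vecg_h$ with $|\vecg_h(\vecx)|\le 1$ after... — actually $|\vecg_h|$ need not equal $1$, so instead I would verify \eqref{equ:boundP} by the identity $e^{sG}$ being an isometry when $|\vecg|=1$: here $\vecg_h=I_{\mV_h}\vecg$ has $|\vecg_h(\vecx_n)|=1$ at nodes but not in between, so the clean route is to observe that \eqref{equ:G8}--\eqref{equ:G11} hold for $G_h$ in place of $G$ at points where $|\vecg_h|=1$, hence \eqref{equ:eadj} gives $(e^{-W_kG_h})^*=e^{W_kG_h}$ and $e^{W_kG_h}e^{-W_kG_h}=I$ pointwise; alternatively, if $|\vecg_h|\ne1$, one still gets $\|e^{-W_kG_h}\vecu\|_D\le c\|\vecu\|_D$ with $c$ depending on $\|\vecg_h\|_{\mL^\infty}$, which suffices if the constant in \eqref{equ:boundP} is allowed to be $c$ rather than $1$ — I would check which the paper needs and, if strict $\le\|\vecu\|_D$ is required, invoke that $|\vecg_h|\le 1$ follows from $|\vecg|=1$ and convexity of the Euclidean ball under barycentric interpolation, giving $|\vecg_h(\vecx)|\le 1$ everywhere, whence $e^{-W_kG_h}$ is a contraction.

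Next, for \eqref{equ:boundFk}: by \eqref{equ:Fk}, $\|R_{h,k}(t_j,\vecm_h^{(j)})\|_D \le \lambda_2^2\|\vecm_h^{(j)}\times(\vecm_h^{(j)}\times\wtd C_{h,k})\|_D + \lambda_1^2\|\wtd C_{h,k}\|_D \le (\lambda_2^2\|\vecm_h^{(j)}\|_{\mL^\infty}^2 + \lambda_1^2)\|\wtd C_{h,k}(t_j,\vecm_h^{(j)})\|_D \le c\,\|\wtd C_{h,k}(t_j,\vecm_h^{(j)})\|_D$ using $\|\vecm_h^{(j)}\|_{\mL^\infty}\le1$. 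Then by \eqref{equ:Chk}, $\|\wtd C_{h,k}(t_j,\vecu)\|_D \le (1 + \|G_h\|_{\mathcal L} + \|G_h^2\|_{\mathcal L})\|D_{h,k}(t_j,\vecu)\|_D \le c\|D_{h,k}(t_j,\vecu)\|_D$, and by \eqref{equ:Dhk}, $\|D_{h,k}(t_j,\vecu)\|_D \le (1 + 2\|G_h\|_{\mathcal L} + \ldots)\|C_h\vecu\|_D \le c\|C_h\vecu\|_D$. Finally $\|C_h\vecm_h^{(j)}\|_D \le \|\vecm_h^{(j)}\|_{\mL^\infty}\|I_{\mV_h}(\Delta\vecg)\|_D + 2\|I_{\mV_h}(\nabla\vecg)\|_{\mL^\infty}\|\nabla\vecm_h^{(j)}\|_D \le c\big(1 + \|\nabla\vecm_h^{(j)}\|_D\big)$, using $\vecg\in\mW^{2,\infty}(D)$ so that $\|I_{\mV_h}(\Delta\vecg)\|_D\le c\|\Delta\vecg\|_{\mL^\infty}|D|^{1/2}$ and $\|I_{\mV_h}(\nabla\vecg)\|_{\mL^\infty}\le c\|\nabla\vecg\|_{\mL^\infty}$. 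Squaring and using $(a+b)^2\le 2a^2+2b^2$ yields \eqref{equ:boundFk}. For \eqref{equ:boundcurlP}, apply $\nabla\times$ to \eqref{def:eh}: $\nabla\times(e^{W_kG_h}\vecm_h^{(j)}) = \nabla\times\vecm_h^{(j)} + (\sin W_k)\nabla\times(G_h\vecm_h^{(j)}) + (1-\cos W_k)\nabla\times(G_h^2\vecm_h^{(j)})$, and on each tetrahedron $\nabla\times(\vecu\times\vecg_h)$ expands via the product rule into terms bounded by $c(\|\nabla\vecu\|_{\mL^2(K)} + \|\vecu\|_{\mL^2(K)})$ with $c$ depending on $\|\vecg_h\|_{\mW^{1,\infty}}\le c\|\vecg\|_{\mW^{1,\infty}}$; summing over $K$ and using Lemma~\ref{lem:mhj} for the $\|\vecu\|_D$ terms gives \eqref{equ:boundcurlP}.

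The main obstacle is the $\mL^\infty$-stability of the nodal interpolants $I_{\mV_h}(\Delta\vecg)$ and $I_{\mV_h}(\nabla\vecg)$ and the handling of the piecewise-polynomial cross products $G_h\vecu$, $G_h^2\vecu$ when estimating curls element by element: one must be careful that applying $\nabla\times$ does not lose a factor of $h^{-1}$. This is avoided because $G_h\vecu=\vecu\times\vecg_h$ with $\vecg_h\in\mV_h$ piecewise linear, so $\nabla\vecg_h$ is piecewise constant with $\|\nabla\vecg_h\|_{\mL^\infty}\le c\|\vecg\|_{\mW^{1,\infty}}$ independent of $h$ by the standard interpolation estimate on shape-regular meshes; hence all constants are genuinely deterministic and $h$-independent, depending only on $\vecg$ (through $\|\vecg\|_{\mW^{2,\infty}}$) and on $|D|$, $\lambda_1$, $\lambda_2$ — and the randomness enters only through the uniformly bounded trigonometric coefficients $\sin W_k(t_j)$, $\cos W_k(t_j)$, so no expectation or pathwise control of $W$ is needed, giving the bounds $\mP$-a.s. pointwise in $\omega$.
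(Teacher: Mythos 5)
Your overall strategy matches the paper's: trace through \eqref{equ:Dhk}--\eqref{equ:Fk} and \eqref{def:eh}, use $|\sin W_k|\le 1$, $|1-\cos W_k|\le 2$ so that no random factor is unbounded, use $\|\vecm_h^{(j)}\|_{\mL^\infty}\le 1$ from Lemma~\ref{lem:mhj}, and derive \eqref{equ:boundcurlP} via the product rule for $\nabla(e^{W_kG_h}\vecm_h^{(j)})$. For \eqref{equ:boundFk} the paper simply defers to \cite{BNT13}, and your unwinding of $R_{h,k}\to\wtd C_{h,k}\to D_{h,k}\to C_h$ is the right shape (one small slip: the term $C_hG_h\vecu$ in $D_{h,k}$ involves $C_h$ acting on $G_h\vecu$, so it is \emph{not} controlled by $\|C_h\vecu\|_D$; you must expand $C_h(G_h\vecu)$ again using $\nabla(\vecu\times\vecg_h)=\nabla\vecu\times\vecg_h+\vecu\times\nabla\vecg_h$, which still lands on $c(1+\|\nabla\vecu\|_D)$).

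The genuine gap is in \eqref{equ:boundP}. Your ``clean route'' (that \eqref{equ:G8}--\eqref{equ:G11} hold for $G_h$ ``at points where $|\vecg_h|=1$'') is a dead end: $|\vecg_h|=1$ only at the vertices $\vecx_n$, a set of measure zero, so the isometry identities for $e^{sG}$ do not hold a.e.\ and cannot be integrated. You then correctly pivot to $|\vecg_h(\vecx)|\le 1$ everywhere by convexity, but you conclude ``whence $e^{-W_kG_h}$ is a contraction'' without justification, and this is precisely the nontrivial step. When $|\vecg_h|<1$ the formula \eqref{def:eh} is \emph{not} the exponential of a skew operator, so contractivity is not formal; the paper establishes it by the pointwise identity
\[
\bigl|e^{-W_k(t_j)G_h}\vecu\bigr|^2
=|\vecu|^2+\bigl(1-\cos W_k(t_j)\bigr)^2\bigl(|\vecg_h|^2-1\bigr)\,|\vecu\times\vecg_h|^2,
\]
obtained by expanding the square, using $\vecu\cdot(\vecu\times\vecg_h)=0$, $\vecu\cdot\bigl((\vecu\times\vecg_h)\times\vecg_h\bigr)=-|\vecu\times\vecg_h|^2$, the cancellation $\sin^2 s-2(1-\cos s)=-(1-\cos s)^2$, and $|(\veca\times\vecb)\times\vecb|^2=|\veca\times\vecb|^2|\vecb|^2$. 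Only after this identity does $|\vecg_h|\le 1$ give the contraction. Your hedge that $\le c\|\vecu\|_D^2$ would suffice for the downstream uses is in fact correct (only the Gronwall constant changes), but the lemma as stated asserts the sharp bound, and the step you elided is the entire content of the paper's proof of \eqref{equ:boundP}.
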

\begin{proof}
The proof of~\eqref{equ:boundFk} is similar to that of~\cite[Lemma 5.3]{BNT13}. 
To prove~\eqref{equ:boundP} we first note that the definition of 
$e^{-W_k(t_j) G_h}\vecu$ gives
\begin{align*}
\left|e^{-W_k(t_j)G_h}\vecu\right|^2
&=
\left|
\vecu
-
\big(\sin W_k(t_j)\big) \vecu\times\vecg_h
+
\big(1-\cos W_k(t_j)\big) (\vecu\times\vecg_h)\times\vecg_h
\right|^2 
\\
&=
|\vecu|^2
+
(1-\cos W_k(t_j) )^2
\left(\left|\left(\vecu\times\vecg_h\right)\times\vecg_h\right|^2
-
\left|\vecu\times\vecg_h\right|^2
\right)\\
&=
|\vecu|^2
+
\big(1-\cos W_k(t_j)\big)^2
(|\vecg_h|^2-1)
\left|\vecu\times\vecg_h\right|^2,
\end{align*}
where in the last step we used $|(\veca\times\vecb)\times\vecb|^2 =
|\veca\times\vecb|^2 |\vecb|^2$ for all $\veca$, $\vecb\in\R^3$.
Since $|\vecg(\vecx_i)|=1$
and $\sum_{i=1}^N \phi_{i}(\vecx)=1$ for all $\vecx\in D$, we have
\[
|\vecg_h(\vecx)|^2
=
\left|\sum_{i=1}^{N} 
\vecg(\vecx_{i})\phi_{i}(\vecx)\right|^2
\le 1.
\]
Therefore,
\begin{equation*}
\left|e^{-W_k(t_j) G_h}\vecu\right|^2
\leq
\left|\vecu\right|^2
\quad a.e. \text{ in } D,
\end{equation*}
proving~\eqref{equ:boundP}.

Finally, in order to prove~\eqref{equ:boundcurlP} we use
the inequality
\[
\norm{\nabla\times\vecu}{D}^2
\leq
c\norm{\nabla\vecu}{D}^2
\quad\forall\vecu\in\mH^1(D)
\]
to obtain
\begin{align*}
\norm{\nabla\times (e^{W_k(t_j) G_h}\vecm^{(j)}_h)}{D}^2
\leq
c\norm{\nabla\big(e^{W_k(t_j) G_h}\vecm^{(j)}_h\big)}{D}^2.
\end{align*}
On the other hand from the definition of $e^{W_k(t_j) G_h}$ 
, we deduce
\begin{align*}
\nabla\big(e^{W_k(t_j) G_h}\vecm^{(j)}_h\big)
&=
e^{W_k(t_j) G_h}\nabla\vecm^{(j)}_h
+
\big(\sin W_k(t_j)\big)\vecm^{(j)}_h\times\nabla\vecg_h\\
&\quad+
(1-\cos W_k(t_j) )
\left(
(\vecm^{(j)}_h\times\nabla\vecg_h)\times\vecg_h
+
(\vecm^{(j)}_h\times\vecg_h)\times\nabla\vecg_h
\right).
\end{align*}
Since $\vecg\in\mW^{2,\infty}(D)$, by using Lemma~\ref{lem:mhj} and~\eqref{equ:boundP} we obtain from the above equality
\begin{align*}
\left|
\nabla (e^{W_k(t_j) G_h}\vecm^{(j)}_h)
\right|^2
&\leq
c
+
\left|
e^{W_k(t_j) G_h}(\nabla\vecm^{(j)}_h)
\right|^2
\leq
c
+
c
\left|
\nabla\vecm^{(j)}_h
\right|^2.
\end{align*}
This completes the proof.
\end{proof}
%-----------------------------------------------------
%------------------------------------------------------
\begin{lemma}\label{lem:4.2}
The sequence 
$\left\{
\big(\vecm_h^{(j)},\vecv_h^{(j)},\vecP^{(j)}_h\big)
\right\}_{j=0,1,\cdots,J}$ produced by Algorithm \ref{Algo:1} satisfies
$\mP\text{-a.s.},$
\begin{align}\label{InE:mP3}
\|\nabla\vecm^{(j)}_h\|^2_{D}
&+
k%\lambda_2\mu^{-1}
\sum_{i=0}^{j-1}
\|\vecv_h^{(i)}\|^2_{D}
+k^2(2\theta-1)
\sum_{i=0}^{j-1}
\|\nabla\vecv^{(i)}_h\|^2_{D}
+
%\mu_0
\|\vecP_h^{(j)}\|^2_{\wtd D}
\nn\\
&
+
%\mu_0
\sum_{i=0}^{j-1}
\|\vecP_h^{(i+1)}-\vecP_h^{(i)}\|^2_{\wtd D}
+
k%\sigma
\sum_{i=0}^{j-1}
\|\nabla\times\vecP^{(i)}_h\|^2_{\wtd D}
\leq
c.
\end{align}
\end{lemma}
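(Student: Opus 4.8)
The estimate \eqref{InE:mP3} is a discrete energy inequality, so the natural approach is to test the two discrete equations \eqref{disE:LLG} and \eqref{disE:NewMax2} with the solution itself (or an appropriate increment of it), sum over time steps, and absorb the error terms using a discrete Gronwall lemma together with the boundedness results of Lemma~\ref{lem:Fk}. Concretely, the plan is: (i) in \eqref{disE:LLG}, take the test function $\vecw_h^{(j)} = \vecv_h^{(j)}\in\mW_h^{(j)}$; (ii) in \eqref{disE:NewMax2}, take $\veczeta_h = k\,\vecP_h^{(j+1)}$; (iii) add, sum over $j$, and estimate. I expect the two equations to couple precisely through the terms $\mu\inpro{e^{-W_k(t_j)G_h}\vecP_h^{(j)}}{\vecv_h^{(j)}}_D$ and $\sigma_D\inpro{\nabla\times(e^{W_k(t_j)G_h}\vecm_h^{(j)})}{\nabla\times\vecP_h^{(j+1)}}_D$, and these will need to be controlled by Young's inequality and reabsorbed.

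**The LLG part.** Testing \eqref{disE:LLG} with $\vecv_h^{(j)}$ kills the $\lambda_1$ cross-product term (since $\inpro{\vecm_h^{(j)}\times\vecv_h^{(j)}}{\vecv_h^{(j)}}_D = 0$), leaves $\lambda_2\|\vecv_h^{(j)}\|_D^2$, and produces $\mu\inpro{\nabla(\vecm_h^{(j)}+k\theta\vecv_h^{(j)})}{\nabla\vecv_h^{(j)}}_D$. Writing $\vecm_h^{(j)}+k\vecv_h^{(j)}$ for the un-normalised update and using the algebraic identity $\inpro{\nabla\vecm_h^{(j)}}{\nabla\vecv_h^{(j)}}_D = \tfrac{1}{2k}\big(\|\nabla(\vecm_h^{(j)}+k\vecv_h^{(j)})\|_D^2 - \|\nabla\vecm_h^{(j)}\|_D^2 - k^2\|\nabla\vecv_h^{(j)}\|_D^2\big)$, the gradient term becomes a telescoping difference plus the crucial factor $k^2\mu(\theta-\tfrac12)\|\nabla\vecv_h^{(j)}\|_D^2$. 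Then Lemma~\ref{lem:bar} (valid under the assumed angle condition \eqref{E:CondTe}) gives $\|\nabla\vecm_h^{(j+1)}\|_D^2 \le \|\nabla(\vecm_h^{(j)}+k\vecv_h^{(j)})\|_D^2$, which is what lets the telescoping close with $\|\nabla\vecm_h^{(j+1)}\|_D^2$ on the left. The right-hand side terms $\inpro{R_{h,k}}{\vecv_h^{(j)}}_D$ and $\mu\inpro{e^{-W_k(t_j)G_h}\vecP_h^{(j)}}{\vecv_h^{(j)}}_D$ are bounded by $\tfrac{\lambda_2}{2}\|\vecv_h^{(j)}\|_D^2 + c\|R_{h,k}\|_D^2 + c\|\vecP_h^{(j)}\|_D^2$, and by \eqref{equ:boundFk}--\eqref{equ:boundP} the first of these is $\le c + c\|\nabla\vecm_h^{(j)}\|_D^2$.

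**The Maxwell part and conclusion.** Testing \eqref{disE:NewMax2} with $k\vecP_h^{(j+1)}$ and using $\inpro{d_t\vecP_h^{(j+1)}}{\vecP_h^{(j+1)}}_D = \tfrac{1}{2k}\big(\|\vecP_h^{(j+1)}\|_{\wtd D}^2 - \|\vecP_h^{(j)}\|_{\wtd D}^2 + \|\vecP_h^{(j+1)}-\vecP_h^{(j)}\|_{\wtd D}^2\big)$ produces the telescoping $\|\vecP\|^2$ terms, the $\|\vecP_h^{(j+1)}-\vecP_h^{(j)}\|^2$ term, and $k\|\sqrt{\sigma}\,\nabla\times\vecP_h^{(j+1)}\|_{\wtd D}^2$ (bounded below by $c\,k\|\nabla\times\vecP_h^{(j+1)}\|_{\wtd D}^2$ since $\sigma$ is positive and bounded below). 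The coupling term $k\sigma_D\inpro{\nabla\times(e^{W_k(t_j)G_h}\vecm_h^{(j)})}{\nabla\times\vecP_h^{(j+1)}}_D$ is handled by Young's inequality: $\le \tfrac{1}{2}k\sigma_D\|\nabla\times\vecP_h^{(j+1)}\|_D^2 + ck\|\nabla\times(e^{W_k(t_j)G_h}\vecm_h^{(j)})\|_D^2$; the first piece is absorbed into $k\|\sqrt{\sigma}\,\nabla\times\vecP_h^{(j+1)}\|^2$, and the second is $\le ck + ck\|\nabla\vecm_h^{(j)}\|_D^2$ by \eqref{equ:boundcurlP}. Adding the LLG and Maxwell estimates, summing over $j = 0,\ldots,\ell-1$, and using that $\vecm_h^{(0)}=I_{\mV_h}\vecm_0$, $\vecP_h^{(0)}=I_{\mY_h}\vecP_0$ have bounded energies (by the regularity \eqref{E:Cond1} and standard interpolation estimates), one arrives at
\[
\|\nabla\vecm_h^{(\ell)}\|_D^2 + k\sum_{i=0}^{\ell-1}\|\vecv_h^{(i)}\|_D^2 + k^2(2\theta-1)\sum_{i=0}^{\ell-1}\|\nabla\vecv_h^{(i)}\|_D^2 + \|\vecP_h^{(\ell)}\|_{\wtd D}^2 + \sum_{i=0}^{\ell-1}\|\vecP_h^{(i+1)}-\vecP_h^{(i)}\|_{\wtd D}^2 + k\sum_{i=0}^{\ell-1}\|\nabla\times\vecP_h^{(i)}\|_{\wtd D}^2 \le c + ck\sum_{i=0}^{\ell-1}\big(\|\nabla\vecm_h^{(i)}\|_D^2 + \|\vecP_h^{(i)}\|_{\wtd D}^2\big),
\]
and the discrete Gronwall inequality then yields the bound $c = c(T)$, uniform in $h$ and $k$.

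\textbf{Main obstacle.} The delicate point is the gradient term when $\theta \in [0,\tfrac12)$: there the factor $k^2(2\theta-1)$ is negative, so that term must be moved to the right-hand side and controlled, which is exactly where one needs an inverse estimate $\|\nabla\vecv_h^{(j)}\|_D \le c h^{-1}\|\vecv_h^{(j)}\|_D$ and hence the CFL-type condition $k = o(h^2)$ (or $k \le ch^2$ with small $c$) to absorb $k^2(2\theta-1)\|\nabla\vecv_h^{(j)}\|_D^2$ into the good term $\tfrac{k}{2}\lambda_2\|\vecv_h^{(j)}\|_D^2$; for $\theta\in[\tfrac12,1]$ no such condition is needed since the term has a favourable sign. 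A second technical care point is ensuring all the "generic constant $c$" appearances genuinely depend only on the data ($\vecg$, $\vecM_0$, $\vecH_0$, $T$, $\sigma_D$, $\lambda_1$, $\lambda_2$, $\mu_0$) and not on $\omega$, $h$, or $k$ — this is guaranteed by Lemma~\ref{lem:Fk}, whose constants are deterministic, together with the $\mP$-a.s. boundedness of the increments of $W$ being irrelevant because only $\sin W_k$, $\cos W_k$ (which are bounded by $1$) enter the estimates.
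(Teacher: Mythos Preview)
Your proposal is correct and follows essentially the same route as the paper: test \eqref{disE:LLG} with $\vecv_h^{(j)}$, use Lemma~\ref{lem:bar} to pass from $\|\nabla(\vecm_h^{(j)}+k\vecv_h^{(j)})\|_D^2$ to $\|\nabla\vecm_h^{(j+1)}\|_D^2$, test \eqref{disE:NewMax2} with $\vecP_h^{(j+1)}$, use the Abel/polarisation identity, control all coupling and source terms via Young's inequality and Lemma~\ref{lem:Fk}, sum, and close with a discrete Gronwall argument.

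One remark on your ``Main obstacle'' paragraph: in the paper's proof of this lemma there is \emph{no} CFL condition and no inverse estimate is invoked. The term $k^2(2\theta-1)\sum_i\|\nabla\vecv_h^{(i)}\|_D^2$ is simply carried on the left-hand side of \eqref{InE:mP3} as stated; when $\theta<\tfrac12$ that term is negative, so the inequality is weaker but still asserted in that form. The Gronwall step in the paper is run only on the quantity $\|\nabla\vecm_h^{(j)}\|_D^2+\|\vecP_h^{(j)}\|_{\wtd D}^2$ (obtained by adding \eqref{InE:mP1} and \eqref{InE:mP2}), and the full estimate \eqref{InE:mP3} is then read off by substituting the resulting bound back into \eqref{InE:mP1}--\eqref{InE:mP2}. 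The inverse estimate and the condition $k=o(h^2)$ appear only \emph{afterwards}, in Lemma~\ref{lem:3.2a}, when one wants to extract from \eqref{InE:mP3} a uniform bound on $\|\vecv_{h,k}\|_{D_T}^2$ alone. So your concern is legitimate and your fix is the right one, but it belongs to the next lemma rather than to this one.
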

%---------------------------
\begin{proof}
Choosing $\vecw^{(j)}_h=\vecv^{(j)}_h$ in~\eqref{disE:LLG}, we obtain
\begin{align*}
\lambda_2
\|\vecv_h^{(j)}\|^2_{D}
+
\mu k\theta
\|\nabla\vecv_h^{(j)}\|^2_{D}
&=
-\mu\inpro{\nabla\vecm_h^{(j)}}{\nabla\vecv_h^{(j)}}_{D} 
-
\inpro{R_{h,k}(t_j,\vecm_h^j)}{\vecv_h^{(j)}}_D
\\
&\quad
+
\mu
\inpro{e^{-W_k(t_j) G_h}\vecP^{(j)}_h}{\vecv_h^{(j)}}_{D},
\end{align*}
or equivalently
\begin{align*}
\inpro{\nabla\vecm_h^{(j)}}{\nabla\vecv_h^{(j)}}_{D}
&=
-\lambda_2\mu^{-1}
\|\vecv_h^{(j)}\|^2_{D}
-k\theta
\|\nabla\vecv_h^{(j)}\|^2_{D}
-\mu^{-1}
\inpro{R_{h,k}(t_j,\vecm_h^j)}{\vecv_h^{(j)}}_D
\nn\\
&\quad
+
\inpro{e^{-W_k(t_j) G_h}\vecP^{(j)}_h}{\vecv_h^{(j)}}_{D}.
\end{align*}
Lemma~\ref{lem:bar} and the above equation yield
\begin{align*}
\|\nabla\vecm_h^{(j+1)}\|_D^2
&\leq
\|\nabla(\vecm_h^{(j)}+k\vecv_h^{(j)})\|_D^2
\\
&=
\|\nabla\vecm_h^{(j)}\|_D^2
+
k^2(1-2\theta)
\|\nabla\vecv_h^{(j)}\|_D^2
-
2k\lambda_2\mu^{-1}
\|\vecv_h^{(j)}\|^2_{D}
\\
&\quad
-2k\mu^{-1}
\inpro{R_{h,k}(t_j,\vecm_h^j)}{\vecv_h^{(j)}}_D
+
2k
\inpro{e^{-W_k(t_j) G_h}\vecP^{(j)}_h}{\vecv_h^{(j)}}_{D}.
\end{align*}
By using the elementary inequality
\begin{equation}\label{InE:Cauchy}
2ab\leq \alpha^{-1}a^2+\alpha b^2
\quad \forall\alpha>0, \forall a, b\in\R,
\end{equation} 
for the last two terms on the right hand side, we deduce
\begin{multline*}
\|\nabla \vecm_h^{(j+1)}\|_D^2
+
2k\lambda_2\mu^{-1}
\|\vecv_h^{(j)}\|^2_{D}
+k^2(2\theta-1)
\|\nabla\vecv_h^{(j)}\|^2_{D}
\\
\leq
\|\nabla\vecm_h^{(j)}\|_D^2
+
k\lambda_2\mu^{-1}
\|\vecv_h^{(j)}\|_D^2
+2k\lambda_2^{-1}\mu^{-1}
\|R_{h,k}(t_j,\vecm_h^j)\|_D^2
+
2k\lambda_2^{-1}\mu
\|e^{-W_k(t_j) G_h}\vecP^{(j)}_h\|_D^2.
\end{multline*}
By rearranging the above inequality and using~\eqref{equ:boundFk}--\eqref{equ:boundP} 
we obtain
\begin{align*}
\|\nabla\vecm_h^{(j+1)}\|_D^2
&+
k\lambda_2\mu^{-1}
\|\vecv_h^{(j)}\|^2_{D}
+k^2(2\theta-1)
\|\nabla\vecv_h^{(j)}\|^2_{D}\nn\\
&\leq
\|\nabla\vecm_h^{(j)}\|_D^2
+
2k\lambda_2^{-1}\mu
\|\vecP^{(j)}_h\|_D^2
+
k\lambda_2^{-1}\mu^{-1}c
\|\nabla\vecm_h^j\|_D^2
+
k\lambda_2^{-1}\mu^{-1}c.
\end{align*}
%In particular, we have
%\[
%\|\nabla\vecm_h^{(j+1)}\|_D^2
%\le
%\|\nabla\vecm_h^{(j)}\|_D^2
%+
%ck
%\|\nabla\vecm_h^j\|_D^2
%+
%ck
%\|\vecP^{(j)}_h\|_D^2
%+
%ck.
%\]
Replacing $j$ by $i$ in the above inequality and summing for $i$ from $0$ to
$j-1$  yields
\begin{align*}
\|\nabla\vecm_h^{(j)}\|_D^2
&+
\lambda_2\mu^{-1}
k
\sum_{i=0}^{j-1}
\|\vecv_h^{(i+1)}\|^2_{D}
+(2\theta-1)
k^2
\sum_{i=0}^{j-1}
\|\nabla\vecv_h^{(i+1)}\|^2_{D}\nn\\
&\leq
\|\nabla\vecm_h^{(0)}\|_D^2
+
%2\lambda_2^{-1}\mu
ck
\sum_{i=0}^{j-1}
\|\vecP^{(i)}_h\|_D^2
+
%\lambda_2^{-1}\mu^{-1}c
ck
\sum_{i=0}^{j-1}
\|\nabla\vecm_h^i\|_D^2
+
c.
%\lambda_2^{-1}\mu^{-1}cT.
\end{align*}
Since $\vecm_0\in\mH^2(D)$ it can be shown that there exists a deterministic constant 
$c$ depending only on $\vecm_0$ such that 
\begin{equation}\label{equ:boundm0}
\|\nabla\vecm_h^{(0)}\|_D^2
\leq c.
\end{equation}
By using~\eqref{equ:boundm0} we deduce
\begin{align}\label{InE:mP1}
\|\nabla\vecm_h^{(j)}\|_D^2
&+
%\lambda_2\mu^{-1}
k
\sum_{i=0}^{j-1}
\|\vecv_h^{(i+1)}\|^2_{D}
+
k^2
(2\theta-1)
\sum_{i=0}^{j-1}
\|\nabla\vecv_h^{(i+1)}\|^2_{D}\nn\\
&\leq
c
+
c
\sum_{i=0}^{j-1}k
\|\vecP^{(i)}_h\|_D^2
+c
\sum_{i=0}^{j-1}k
\|\nabla\vecm_h^i\|_D^2.
\end{align}
In order to estimate the two sums on the right-hand side, we take
$\veczeta_h=\vecP^{(j+1)}_h$ in~\eqref{disE:NewMax2}
to obtain the following identity
\begin{align*}
\mu_0
\inpro{d_t \vecP^{(j+1)}_h}{\vecP^{(j+1)}_h}_{\wtd D}
+
\inpro{\sigma\nabla\times\vecP^{(j+1)}_h}{\nabla\times\vecP^{(j+1)}_h}_{\wtd D}
=
 \sigma_D\inpro{\nabla\times (e^{W_k(t_j) G_h}\vecm^{(j)}_h)}
 {\nabla\times\vecP^{(j+1)}_h}_{D}.
\end{align*}
Let $\sig_0$ is the lower bound of $\sig$ on $\wtd D$.
By using successively~\eqref{InE:Cauchy} and~\eqref{equ:boundcurlP} we deduce from the above equality
\begin{align*}
\mu_0
\inpro{\vecP_h^{(j+1)}-\vecP_h^{(j)}}{\vecP_h^{(j+1)}}_{\wtd D}
+
k\sigma_0
\|\nabla\times\vecP^{(j+1)}_h\|^2_{\wtd D}
&\leq
+
k\frac{\sigma_D^2}{2\sigma_0}
\|\nabla\times (e^{W_k(t_j) G_h}\vecm^{(j)}_h)\|_D^2\nn\\
&\quad
+
\frac{1}{2}k\sigma_0
\|\nabla\times\vecP^{(j+1)}_h\|^2_{D}\nn\\
&\leq
\frac{1}{2}k\sigma_0
\|\nabla\times\vecP^{(j+1)}_h\|^2_{\wtd D}\nn\\
&\quad+
ck
\|\nabla\vecm^{(j)}_h\|_D^2
+ck,
\end{align*}
or equivalently
\begin{align*}
\mu_0
\inpro{\vecP_h^{(j+1)}-\vecP_h^{(j)}}{\vecP_h^{(j+1)}}_{\wtd D}
+
\frac{1}{2}k\sigma_0
\|\nabla\times\vecP^{(j+1)}_h\|^2_{\wtd D}
\leq
ck
\|\nabla\vecm^{(j)}_h\|_D^2
+ck.
\end{align*}
Replacing $j$ by $i$ in the above inequality and summing over $i$ from $0$ to
$j-1$ and using the following Abel summation
\begin{equation*}
\sum_{i=0}^{j-1}
(\veca_{i+1}-\veca_i)\cdot\veca_{i+1}
%\inpro{\veca_{i+1}-\veca_i}{\veca_{i+1}}
=
\frac{1}{2} |\veca_{j}|^2
-
\frac{1}{2} |\veca_{0}|^2
+
\frac{1}{2}\sum_{i=0}^{j-1} |\veca_{i+1}-\veca_i|^2,
\quad \veca_i \in \R^3,
\end{equation*}
we obtain
\begin{align*}
\|\vecP_h^{(j)}\|^2_{\wtd D}
+
\sum_{i=0}^{j-1}
\|\vecP_h^{(i+1)}-\vecP_h^{(i)}\|^2_{\wtd D}
&+
\sigma_0\mu_0^{-1}
\sum_{i=0}^{j-1}k
\|\nabla\times\vecP^{(i+1)}_h\|^2_{\wtd D}\nn\\
&\leq
\|\vecP_h^{(0)}\|^2_{\wtd D}
+
c\sum_{i=0}^{j-1}k
\|\nabla\vecm^{(i)}_h\|^2_{D}
+cT\sigma.
\end{align*}
By using~\eqref{E:Cond1} and the error estimate for the interpolant
$\vecP_h^{(0)} = I_{\mY_h}\vecP_0$, it can be shown that there exists a constant c depending only on
$\vecP_0$ such that
\begin{equation}\label{equ:boundP0}
\|\vecP_h^{(0)}\|^2_{\wtd D}
+
\|\nabla\times\vecP_h^{(0)}\|^2_{\wtd D}
\leq
c.
\end{equation}
By using~\eqref{equ:boundP0} we deduce
\begin{align}\label{InE:mP2}
\|\vecP_h^{(j)}\|^2_{\wtd D}
+
\sum_{i=0}^{j-1}
\|\vecP_h^{(i+1)}-\vecP_h^{(i)}\|^2_{\wtd D}
&+
%\sigma\mu_0^{-1}
k
\sum_{i=0}^{j-1}
\|\nabla\times\vecP^{(i+1)}_h\|^2_{\wtd D}
&\leq
c
+
ck
\sum_{i=0}^{j-1}
\|\nabla\vecm^{(i)}_h\|^2_{D}.
\end{align}

From~\eqref{InE:mP1} and~\eqref{InE:mP2} we obtain
\begin{align*}
\|\nabla\vecm^{(j)}_h\|^2_{D}
+
\|\vecP_h^{(j)}\|^2_{\wtd D}
\leq
c+
ck
\sum_{i=0}^{j-1}
\|\vecP_h^{(i)}\|^2_{\wtd D}
+
ck
\sum_{i=0}^{j-1}
\|\nabla\vecm^{(i)}_h\|^2_{\wtd D}.
\end{align*}
By using induction and~\eqref{equ:boundm0}-\eqref{equ:boundP0} we can show that
\begin{equation*}
\|\nabla\vecm^{(i)}_h\|^2_{D}
+
\|\vecP_h^{(i)}\|^2_{\wtd D}
\leq
c(1+ck)^i.
\end{equation*}
Summing over $i$ from $0$ to $j-1$ and using $1+x \leq e^x$ we obtain
\begin{equation}\label{InE:Gronwall}
k\sum_{i=0}^{j-1}
\|\nabla\vecm^{(i)}_h\|^2_{D}
+
k\sum_{i=0}^{j-1}
\|\vecP_h^{(i)}\|^2_{\wtd D}
\leq
ck\frac{(1+ck)^j-1}{ck}
\leq
e^{ckJ}=c.
\end{equation}
The required result~\eqref{InE:mP3} now follows from~\eqref{InE:mP1},~\eqref{InE:mP2} 
and~\eqref{InE:Gronwall}.
\end{proof}
%--------------------------------------------------------------
\section{Proof of the main theorem}\label{sec:pro}
 The discrete solutions $\vecm_h^{(j)}$, $\vecv_h^{(j)}$ 
 and $\vecP_h^{(j)}$
 constructed via Algorithm~\ref{Algo:1}
 are interpolated in time in the following definition.
 \begin{definition}\label{def:mhk}
 For all $x\in D$ and all $t\in[0,T]$, let 
 $j\in \{ 0,...,J-1 \}$ be
 such that  $t \in [t_j, t_{j+1})$. We then define
 \begin{align*}
 \vecm_{h,k}(t,\vecx) 
 &:= 
 \frac{t-t_j}{k}\vecm_h^{(j+1)}(\vecx)
 +
 \frac{t_{j+1}-t}{k}\vecm_h^{(j)}(\vecx), \\
 \vecm_{h,k}^{-}(t,\vecx) 
 &:= 
 \vecm_h^{(j)}(\vecx), \\
 \vecv_{h,k}(t,\vecx) 
 &:= 
 \vecv_h^{(j)}(\vecx),\\
  \vecP_{h,k}(t,\vecx) 
 &:= 
 \frac{t-t_j}{k}\vecP_h^{(j+1)}(\vecx)
 +
 \frac{t_{j+1}-t}{k}\vecP_h^{(j)}(\vecx), \\
 \vecP_{h,k}^{-}(t,\vecx) 
 &:= 
 \vecP_h^{(j)}(\vecx),\\
 \vecP_{h,k}^{+}(t,\vecx) 
 &:= 
 \vecP_h^{(j+1)}(\vecx).
 \end{align*}
 \end{definition}
 %-------------------------------------------------------------------------
 The above sequences have the following obvious bounds.
 %-------------------------------------------------------------------------
 \begin{lemma}\label{lem:3.2a}
 There exist a deterministic constant $c$
 depending on  $\vecm_0$, $\vecP_0$, $\vecg$, $\mu$, $\sigma$ and $T$
 such that for all $\theta\in[0,1]$ there holds $\mP$-a.s.
 \begin{align*}
 \norm{\vecm_{h,k}^*}{D_T}^2 +
 \left \| \nabla \vecm_{h,k}^* \right \| _{D_T}^2
 +
 \left \| \vecv_{h,k}\right\|_{D_T}^2
 +
 k (2\theta-1)
 \left\| \nabla \vecv_{h,k} \right\|_{D_T}^2 
 \leq
 c,
 \end{align*}
 where $\vecm_{h,k}^*=\vecm_{h,k}$ or $\vecm_{h,k}^-$.
 In particular, 
 when $\theta\in[0,\frac{1}{2})$, there holds  $\mP$-a.s.
 \begin{align*}
 \norm{\vecm_{h,k}^*}{D_T}^2 
 +
 &\left \| \nabla \vecm_{h,k}^* \right \| _{D_T}^2
 +
 \big(1+(2\theta-1)kh^{-2}\big)
 \left \| \vecv_{h,k}\right\|_{D_T}^2
 \leq
 c.
 \end{align*}
 \end{lemma}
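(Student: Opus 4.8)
The plan is to convert the discrete bounds of Lemma~\ref{lem:4.2}, together with the pointwise bound of Lemma~\ref{lem:mhj}, into bounds on the time interpolants of Definition~\ref{def:mhk}. The first observation is that every space--time norm in the statement is piecewise polynomial in $t$, so it collapses to a $k$-weighted sum over the time levels: for the piecewise constant interpolants one has $\norm{\vecv_{h,k}}{D_T}^2 = k\sum_{j=0}^{J-1}\norm{\vecv_h^{(j)}}{D}^2$, $\norm{\nabla\vecv_{h,k}}{D_T}^2 = k\sum_{j=0}^{J-1}\norm{\nabla\vecv_h^{(j)}}{D}^2$, $\norm{\vecm_{h,k}^-}{D_T}^2 = k\sum_{j=0}^{J-1}\norm{\vecm_h^{(j)}}{D}^2$, and likewise for $\nabla\vecm_{h,k}^-$. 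For the piecewise linear interpolant $\vecm_{h,k}$, convexity of $\norm{\cdot}{D}$ gives $\norm{\vecm_{h,k}(t)}{D}\le\max\{\norm{\vecm_h^{(j)}}{D},\norm{\vecm_h^{(j+1)}}{D}\}$ on $[t_j,t_{j+1})$, and the same for its gradient, so $\norm{\vecm_{h,k}}{D_T}^2\le 2k\sum_{j=0}^{J}\norm{\vecm_h^{(j)}}{D}^2$ and $\norm{\nabla\vecm_{h,k}}{D_T}^2\le 2k\sum_{j=0}^{J}\norm{\nabla\vecm_h^{(j)}}{D}^2$.

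Next I would insert the discrete estimates. Applying Lemma~\ref{lem:4.2} with its index equal to $J$, so that the sums run over all levels, one has $k\sum_{j}\norm{\vecv_h^{(j)}}{D}^2\le c$, $k^2(2\theta-1)\sum_{j}\norm{\nabla\vecv_h^{(j)}}{D}^2\le c$ (the latter after discarding the remaining, nonnegative, terms on the left of~\eqref{InE:mP3}), and $\norm{\nabla\vecm_h^{(j)}}{D}^2\le c$ for every $j$ as part of the conclusion of Lemma~\ref{lem:4.2}; Lemma~\ref{lem:mhj} gives $\norm{\vecm_h^{(j)}}{D}\le|D|$. Since $kJ=T$, these immediately yield $\norm{\vecm_{h,k}^*}{D_T}^2\le c$, $\norm{\nabla\vecm_{h,k}^*}{D_T}^2\le c$, $\norm{\vecv_{h,k}}{D_T}^2\le c$ and $k(2\theta-1)\norm{\nabla\vecv_{h,k}}{D_T}^2 = k^2(2\theta-1)\sum_{j}\norm{\nabla\vecv_h^{(j)}}{D}^2\le c$, which is the first displayed inequality (for $\theta\ge\tfrac12$ this last term is a genuine bound; for $\theta<\tfrac12$ it is nonpositive, hence trivially $\le c$).

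For the sharper statement when $\theta\in[0,\tfrac12)$, I would retain only the two $\vecv$-terms on the left of~\eqref{InE:mP3}, that is $k\sum_{j}\norm{\vecv_h^{(j)}}{D}^2 + k^2(2\theta-1)\sum_{j}\norm{\nabla\vecv_h^{(j)}}{D}^2\le c$ (legitimate because the discarded terms are nonnegative), and then invoke the standard inverse estimate $\norm{\nabla\vecu}{D}\le c\,h^{-1}\norm{\vecu}{D}$ on $\mV_h$, applied to $\vecv_h^{(j)}\in\mW_h^{(j)}\subset\mV_h$, giving $\norm{\nabla\vecv_h^{(j)}}{D}^2\le c\,h^{-2}\norm{\vecv_h^{(j)}}{D}^2$. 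Because $2\theta-1<0$, multiplying the last inequality by $k^2(2\theta-1)$ reverses it, so substituting into the previous display produces $\big(1+(2\theta-1)kh^{-2}\big)\,k\sum_{j}\norm{\vecv_h^{(j)}}{D}^2\le c$, i.e. $\big(1+(2\theta-1)kh^{-2}\big)\norm{\vecv_{h,k}}{D_T}^2\le c$; the inverse-estimate constant is absorbed into the generic $c$, and when the prefactor is nonpositive the inequality holds trivially.

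The bookkeeping with indices and the convexity step for the piecewise linear interpolant are routine; the one place that needs care is the case $\theta<\tfrac12$, where the sign of $2\theta-1$ must be tracked correctly when inserting the inverse estimate — this is exactly what upgrades the (possibly vacuous) third term of the first inequality to the genuine $h$-dependent bound.
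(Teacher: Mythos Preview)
Your proposal is correct and follows essentially the same approach as the paper: both derive the bounds by translating the discrete estimates of Lemma~\ref{lem:4.2} and Lemma~\ref{lem:mhj} through the interpolation formulas of Definition~\ref{def:mhk}, and both invoke the inverse estimate $\norm{\nabla\vecv_h^{(i)}}{D}^2\le ch^{-2}\norm{\vecv_h^{(i)}}{D}^2$ to obtain the refined bound for $\theta<\tfrac12$. Your write-up simply spells out the details that the paper leaves implicit.
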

 %----------------------------------------------------------------------
 \begin{proof}
 Both inequalities are direct consequences of Definition~\ref{def:mhk}, 
 Lemmas~\ref{lem:mhj} and~\ref{lem:4.2}, noting that
the second inequality requires the use of
the inverse estimate (see e.g.~\cite{Johnson87})
\[
\norm{\nabla\vecv_{h}^{(i)}}{D}^2
\leq
ch^{-2}
\norm{\vecv_{h}^{(i)}}{D}^2.
\]
 \end{proof}
 %----------------------------------------------------------------------
  \begin{lemma}\label{lem:3.2b}
 There exist a deterministic constant $c$
 depending on $\vecm_0$, $\vecP_0$, $\vecg$, $\mu$, $\sigma$ and $T$
 such that for all $\theta\in[0,1]$ there holds $\mP$-a.s.
 \begin{align}
 \norm{\vecP_{h,k}}{\wtd D_T}^2 
 +
 \norm{\vecP_{h,k}^+}{\wtd D_T}^2
 +
 \norm{\nabla\times\vecP_{h,k}^+}{\wtd D_T}^2
&\leq
 c,\label{equ:Phk1}\\
\norm{\vecP_{h,k}-\vecP_{h,k}^*}{\wtd D_T}^2 
&\leq
kc,\label{equ:Phk2}
 \end{align}
 where $\vecP_{h,k}^*=\vecP_{h,k}^+$ or $\vecP_{h,k}^-$.
 \end{lemma}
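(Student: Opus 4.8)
The plan is to read off both estimates from Definition~\ref{def:mhk} together with the discrete energy bound~\eqref{InE:mP3} of Lemma~\ref{lem:4.2}, in the same spirit as the derivation of Lemma~\ref{lem:3.2a} from Lemma~\ref{lem:4.2}. None of these bounds involves $\theta$, since $\theta$ enters only the equation for $\vecm$, so "for all $\theta\in[0,1]$" is automatic. The first observation is that every summand on the left-hand side of~\eqref{InE:mP3} is nonnegative, hence for each fixed $j\in\{0,\ldots,J\}$ one has $\norm{\vecP_h^{(j)}}{\wtd D}^2\le c$, while $k\sum_{i}\norm{\nabla\times\vecP_h^{(i)}}{\wtd D}^2\le c$ and $\sum_{i}\norm{\vecP_h^{(i+1)}-\vecP_h^{(i)}}{\wtd D}^2\le c$.

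For~\eqref{equ:Phk1} I would start with $\vecP_{h,k}^+$: on $[t_j,t_{j+1})$ it equals the constant $\vecP_h^{(j+1)}$, so integrating in time gives $\norm{\vecP_{h,k}^+}{\wtd D_T}^2=k\sum_{j=0}^{J-1}\norm{\vecP_h^{(j+1)}}{\wtd D}^2\le kJc=Tc$ and, after an index shift whose only effect is the harmless extra term $k\norm{\nabla\times\vecP_h^{(0)}}{\wtd D}^2$ (controlled by~\eqref{equ:boundP0}), $\norm{\nabla\times\vecP_{h,k}^+}{\wtd D_T}^2=k\sum_{j}\norm{\nabla\times\vecP_h^{(j)}}{\wtd D}^2\le c$. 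For the piecewise linear interpolant $\vecP_{h,k}$, on each subinterval it is a convex combination of $\vecP_h^{(j)}$ and $\vecP_h^{(j+1)}$, so $\norm{\vecP_{h,k}(t,\cdot)}{\wtd D}\le\max\{\norm{\vecP_h^{(j)}}{\wtd D},\norm{\vecP_h^{(j+1)}}{\wtd D}\}$, whence $\norm{\vecP_{h,k}}{\wtd D_T}^2\le k\sum_{j=0}^{J-1}\big(\norm{\vecP_h^{(j)}}{\wtd D}^2+\norm{\vecP_h^{(j+1)}}{\wtd D}^2\big)\le c$. Summing the three contributions gives~\eqref{equ:Phk1}.

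For~\eqref{equ:Phk2} the key is that on $[t_j,t_{j+1})$ one has $\vecP_{h,k}-\vecP_{h,k}^+=\frac{t_{j+1}-t}{k}\big(\vecP_h^{(j)}-\vecP_h^{(j+1)}\big)$ and $\vecP_{h,k}-\vecP_{h,k}^-=\frac{t-t_j}{k}\big(\vecP_h^{(j+1)}-\vecP_h^{(j)}\big)$. Since $\int_{t_j}^{t_{j+1}}\big(\frac{t_{j+1}-t}{k}\big)^2\dt=\int_{t_j}^{t_{j+1}}\big(\frac{t-t_j}{k}\big)^2\dt=k/3$, integrating in time yields $\norm{\vecP_{h,k}-\vecP_{h,k}^*}{\wtd D_T}^2=\frac{k}{3}\sum_{j=0}^{J-1}\norm{\vecP_h^{(j+1)}-\vecP_h^{(j)}}{\wtd D}^2\le\frac{k}{3}\,c$, which is the claimed bound; all constants are deterministic because the right-hand side of~\eqref{InE:mP3} is. I do not expect a genuine obstacle here: the only point requiring care is the bookkeeping of the index shift between the sum $\sum_{i=0}^{j-1}\norm{\nabla\times\vecP_h^{(i)}}{\wtd D}^2$ appearing in~\eqref{InE:mP3} and the sum over $j=1,\ldots,J$ needed for $\vecP_{h,k}^+$, which is harmless thanks to~\eqref{equ:boundP0}.
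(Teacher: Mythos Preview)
Your proposal is correct and follows essentially the same route as the paper: both parts are read off from Definition~\ref{def:mhk} combined with the discrete energy bound~\eqref{InE:mP3} of Lemma~\ref{lem:4.2}. The only cosmetic difference is that for~\eqref{equ:Phk2} the paper uses the cruder pointwise bound $|(t-t_{j+1})/k|\le 1$ rather than computing the exact value $\int_{t_j}^{t_{j+1}}\big(\frac{t_{j+1}-t}{k}\big)^2\dt=k/3$ as you do, but this changes only the constant.
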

 %----------------------------------------------------------------------
\begin{proof}
It is easy to prove~\eqref{equ:Phk1} by using 
Lemma~\ref{lem:4.2} and Definition~\ref{def:mhk}.
Inequality~\eqref{equ:Phk2} can be deduced from Lemma~\ref{lem:4.2} by noting
that for $t\in[t_j,t_{j+1})$ there holds
\begin{align*}
\left|
 \vecP_{h,k}(t,\vecx)-\vecP_{h,k}^+(t,\vecx)
 \right|
 =
\left|
 \frac{t-t_{j+1}}{k}
 \big(
 \vecP_h^{(j+1)}(\vecx)-\vecP_h^{(j)}(\vecx)
 \big)
 \right|
%\\
 \leq
 \left|\vecP_h^{(j+1)}(\vecx)-\vecP_h^{(j)}(\vecx)\right|,
\end{align*}
completing the proof of the lemma.
\end{proof}
%----------------------------------------------------------------------
The next lemma provides a bound of $\vecm_{h,k}$ in the 
 $\mH^1$-norm and relationships between $\vecm_{h,k}^-$, 
 $\vecm_{h,k}$ and $\vecv_{h,k}$.
 \begin{lemma}\label{lem:3.4}
 Assume that $h$ and $k$ go to $0$ with a further condition $k=o(h^2)$ when
$\theta\in[0,\frac{1}{2})$ and no condition otherwise. The sequences
$\{\vecm_{h,k}\}$, $\{\vecm_{h,k}^{-}\}$, and
 $\{\vecv_{h,k}\}$ defined in
 Definition~\ref{def:mhk} satisfy the following properties
 $\mP$-a.s.
 \begin{align}
 \norm{\vecm_{h,k}}{\mH^1(D_T)} 
 &\le c, \label{equ:mhk h1} \\
 \norm{\vecm_{h,k}-\vecm_{h,k}^-}{D_T}
 &\le ck, \label{equ:mhk mhkm} \\
  \norm{\vecv_{h,k}-\pa_t\vecm_{h,k}}{\mL^1(D_T)}
 &\le ck, \label{equ:vhk mhk} \\
 \norm{|\vecm_{h,k}|-1}{D_T}
 &\le chk. \label{equ:mhk 1}
 \end{align}
 \end{lemma}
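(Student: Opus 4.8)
The plan is to reduce all four estimates to two elementary nodal facts plus the a priori bounds already in hand. At each vertex $\vecx_n$, \eqref{equ:mhj 1} gives $|\vecm_h^{(j)}(\vecx_n)|=1$, and membership in $\mW_h^{(j)}$ gives $\vecv_h^{(j)}(\vecx_n)\cdot\vecm_h^{(j)}(\vecx_n)=0$. Writing $b=\vecv_h^{(j)}(\vecx_n)$ and using the renormalisation in Step~3 of Algorithm~\ref{Algo:1} together with $0\le\sqrt{1+x}-1\le x/2$, I would first record
\[
\big|\vecm_h^{(j+1)}(\vecx_n)-\vecm_h^{(j)}(\vecx_n)\big|\le k|b|
\qquad\text{and}\qquad
\Big|\vecv_h^{(j)}(\vecx_n)-\tfrac1k\big(\vecm_h^{(j+1)}(\vecx_n)-\vecm_h^{(j)}(\vecx_n)\big)\Big|\le\tfrac k2|b|^2 .
\]
Since $\vecm_h^{(j+1)}-\vecm_h^{(j)}$ and $\vecv_h^{(j)}-k^{-1}(\vecm_h^{(j+1)}-\vecm_h^{(j)})$ lie in $\mV_h$, applying the local norm equivalence $\norm{\vecu}{\mL^\infty(K)}^2\le c|K|^{-1}\norm{\vecu}{\mL^2(K)}^2$ on each element and summing over $K\in\mT_h|_D$ upgrades these to $\norm{\vecm_h^{(j+1)}-\vecm_h^{(j)}}{D}^2\le ck^2\norm{\vecv_h^{(j)}}{D}^2$ and $\big\|\vecv_h^{(j)}-k^{-1}(\vecm_h^{(j+1)}-\vecm_h^{(j)})\big\|_{\mL^1(D)}\le ck\norm{\vecv_h^{(j)}}{D}^2$. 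I would also use $\norm{\nabla\vecm_h^{(j)}}{D}^2\le c$ for every $j$ (Lemma~\ref{lem:4.2}) and $\norm{\vecv_{h,k}}{D_T}^2=k\sum_i\norm{\vecv_h^{(i)}}{D}^2\le c$ (Lemma~\ref{lem:3.2a}); the hypothesis $k=o(h^2)$ for $\theta\in[0,\tfrac12)$ enters exactly here, keeping the factor $1+(2\theta-1)kh^{-2}$ in Lemma~\ref{lem:3.2a} bounded away from $0$.

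With these in hand, \eqref{equ:mhk h1}, \eqref{equ:mhk mhkm} and \eqref{equ:vhk mhk} are immediate. For \eqref{equ:mhk h1} the $\mL^2$- and gradient-parts of $\norm{\vecm_{h,k}}{\mH^1(D_T)}$ come straight from Lemma~\ref{lem:3.2a}, while $\pt\vecm_{h,k}=k^{-1}(\vecm_h^{(j+1)}-\vecm_h^{(j)})$ on $[t_j,t_{j+1})$ gives $\norm{\pt\vecm_{h,k}}{D_T}^2=k^{-1}\sum_j\norm{\vecm_h^{(j+1)}-\vecm_h^{(j)}}{D}^2\le ck\sum_j\norm{\vecv_h^{(j)}}{D}^2\le c$. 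For \eqref{equ:mhk mhkm}, $\vecm_{h,k}-\vecm_{h,k}^-=\tfrac{t-t_j}{k}(\vecm_h^{(j+1)}-\vecm_h^{(j)})$ on $[t_j,t_{j+1})$, so $\norm{\vecm_{h,k}-\vecm_{h,k}^-}{D_T}^2\le\sum_j k\norm{\vecm_h^{(j+1)}-\vecm_h^{(j)}}{D}^2\le ck^2\big(k\sum_j\norm{\vecv_h^{(j)}}{D}^2\big)\le ck^2$. For \eqref{equ:vhk mhk}, $\vecv_{h,k}-\pt\vecm_{h,k}=\vecv_h^{(j)}-k^{-1}(\vecm_h^{(j+1)}-\vecm_h^{(j)})$ on $[t_j,t_{j+1})$, so integrating the $\mL^1(D)$-bound above in $t$ yields $\norm{\vecv_{h,k}-\pt\vecm_{h,k}}{\mL^1(D_T)}\le\sum_j k\cdot ck\norm{\vecv_h^{(j)}}{D}^2=ck\big(k\sum_j\norm{\vecv_h^{(j)}}{D}^2\big)\le ck$.

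The last estimate \eqref{equ:mhk 1} is where the real work lies, because $|\vecm_{h,k}|$ is not piecewise polynomial and even at a node $|\vecm_{h,k}(t,\vecx_n)|^2=1-\alpha(1-\alpha)|\vecm_h^{(j+1)}(\vecx_n)-\vecm_h^{(j)}(\vecx_n)|^2$ with $\alpha=(t-t_j)/k$, i.e.\ the unit constraint fails at order $k^2|\vecv_h^{(j)}(\vecx_n)|^2$. Fixing $t\in[t_j,t_{j+1})$ and an element $K$, I would write $\vecw=\vecm_{h,k}(t)|_K$, substitute $\vecw(\vecx_m)\cdot\vecw(\vecx_n)=\tfrac12\big(|\vecw(\vecx_m)|^2+|\vecw(\vecx_n)|^2-|\vecw(\vecx_m)-\vecw(\vecx_n)|^2\big)$ into the barycentric expansion $|\vecw|^2=\sum_{m,n}(\vecw(\vecx_m)\cdot\vecw(\vecx_n))\lambda_m\lambda_n$, and obtain on $K$
\[
|\vecw(\vecx)|^2-1
=\sum_{m}\big(|\vecw(\vecx_m)|^2-1\big)\lambda_m(\vecx)
-\tfrac12\sum_{m,n}\big|\vecw(\vecx_m)-\vecw(\vecx_n)\big|^2\lambda_m(\vecx)\lambda_n(\vecx),
\]
the sums over the vertices of $K$. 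The first term is controlled at the nodes by $\alpha(1-\alpha)|\vecm_h^{(j+1)}(\vecx_n)-\vecm_h^{(j)}(\vecx_n)|^2\le k^2|\vecv_h^{(j)}(\vecx_n)|^2$, and the second by $c\,h_K^2\norm{\nabla\vecw}{\mL^\infty(K)}^2$ since $|\vecw(\vecx_m)-\vecw(\vecx_n)|\le h_K|\nabla\vecw|_K$. Passing to $\mL^2(K)$ with the local inverse estimates, summing over $K$, integrating in $t$, and inserting $\norm{\nabla\vecm_{h,k}(t)}{D}^2\le c$ and $\norm{\vecv_{h,k}}{D_T}^2\le c$ then bounds $\norm{|\vecm_{h,k}|^2-1}{D_T}$; since $|\vecm_{h,k}|+1\ge1$ one has $\big||\vecm_{h,k}|-1\big|\le\big||\vecm_{h,k}|^2-1\big|$, which gives \eqref{equ:mhk 1}. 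The hard part is precisely the bookkeeping in this last step — tracking the powers of $h$ and $k$ produced by the two terms and by the inverse estimates so that they combine to the stated rate — and I would model it on the corresponding estimate in \cite{BNT13}.
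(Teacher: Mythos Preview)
Your proposal is correct and follows the same strategy as the paper, whose entire proof reads ``The proof of this lemma is similar to that of \cite[Lemma~6.3]{BNT13}'' with no further detail. Your nodal identities, the passage to $\mL^2$/$\mL^1$ via Lemma~\ref{lem:nor equ}, and the reductions for \eqref{equ:mhk h1}--\eqref{equ:vhk mhk} are exactly what that reference carries out, and your explicit deferral of the power-counting in \eqref{equ:mhk 1} to \cite{BNT13} mirrors the paper's own reliance on that source.
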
 
%-----------------------------------------------------------------------
\begin{proof}
The proof of this lemma is similar to that of~\cite[Lemma 6.3]{BNT13}
\end{proof}
%-----------------------------------------------------------------------
 The following two Lemmas~\ref{lem:3.7a} show that~$\vecm_{h,k}$ and
~$\vecP_{h,k}$, respectively, satisfy discrete forms of~\eqref{InE:13}
and~\eqref{wE:vecP}.
%-----------------------------------------------------------------------
 \begin{lemma}\label{lem:3.7a}
 Assume that $h$ and $k$ go to 0 with the following conditions
\begin{equation}\label{equ:theta}
\begin{cases}
k = o(h^2) & \quad\text{when } 0 \le \theta < 1/2, \\
k = o(h) & \quad\text{when } \theta = 1/2, \\
\text{no condition} & \quad\text{when } 1/2<\theta\le1.
\end{cases}
\end{equation}
Then for any $\vecvarphi \in C\big(0,T;\C^{\infty}(D)\big)$ and
$\veczeta \in C_T^{1}(0, T;\CX)$, there holds $\mP$-a.s.
 \begin{align}\label{InE:10}
 &-\lambda_1\inpro{\vecm_{h,k}^-\times\vecv_{h,k}}
 {\vecm_{h,k}^-\times\vecvarphi}_{D_T}
 +
 \lambda_2\inpro{\vecv_{h,k}}
 {\vecm_{h,k}^-\times\vecvarphi}_{D_T}\nn\\
&+
 \mu\inpro{\nabla(\vecm_{h,k}^-+k\theta\vecv_{h,k})}
 {\nabla(\vecm_{h,k}^-\times\vecvarphi)}_{D_T} 
+
 \inpro{R_{h,k}(\cdot,\vecm_{h,k}^-)}
 {\vecm_{h,k}^-\times\vecvarphi}_{D_T}\nn\\
&-\mu
\inpro{e^{W_kG_h}\vecP_{h,k}^-}{\vecm_{h,k}^-\times\vecvarphi}_{D_T}
 = O(h+k)
 \end{align}
 and
\begin{align}\label{wE:Phk}
\mu_0\inpro{\vecP_{h,k}}{\veczeta_t}_{\wtd D_T}
&-
\mu_0\inpro{\vecP_h^{(0)}}{\veczeta(0,\cdot)}_{\wtd D}
-
\inpro{\sig\nabla\times\vecP_{h,k}^+}{\nabla\times\veczeta}_{
\wtd D_T}\nn\\
&+
\sig_D\inpro{e^{W_kG_h}\vecm_{h,k}^-}{\nabla \times(\nabla\times\veczeta)}_{
D_T}
=O(h+k).
\end{align}
 \end{lemma}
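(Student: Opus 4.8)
The plan is to pass from the fully discrete identities \eqref{disE:LLG} and \eqref{disE:NewMax2} to the time-continuous statements \eqref{InE:10} and \eqref{wE:Phk} by substituting the interpolants of Definition~\ref{def:mhk}, and then to estimate the error terms that arise from (i) replacing nodal-value products by genuine $L^2$ inner products, (ii) replacing the piecewise-constant-in-time data by their continuous counterparts, and (iii) handling the $+k\theta\vecv_h^{(j)}$ consistency term. For \eqref{InE:10}, I would fix $\vecvarphi\in C(0,T;\C^\infty(D))$, set $\vecw_h^{(j)}:=I_{\mV_h}\big(\vecm_h^{(j)}\times\vecvarphi(t_j,\cdot)\big)\in\mW_h^{(j)}$ (this lies in $\mW_h^{(j)}$ because $(\vecm_h^{(j)}\times\vecvarphi)(\vecx_n)\cdot\vecm_h^{(j)}(\vecx_n)=0$ at every node), insert it into \eqref{disE:LLG}, multiply by $k$ and sum over $j=0,\dots,J-1$. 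The left side of the summed identity is, term by term, a Riemann-sum approximation of the corresponding integral in \eqref{InE:10} with $\vecm_{h,k}^-$, $\vecv_{h,k}$, $\vecP_{h,k}^-$, $R_{h,k}(\cdot,\vecm_{h,k}^-)$ in place of the discrete quantities; the discrepancies are controlled using the interpolation error estimate $\|I_{\mV_h}\vecw-\vecw\|_{\mH^1(D)}\le ch\|\vecw\|_{\mH^2}$ together with the boundedness of $\vecvarphi$ and its derivatives, the uniform bounds of Lemmas~\ref{lem:mhj},~\ref{lem:4.2},~\ref{lem:3.2a},~\ref{lem:3.2b}, and the regularity of $\vecg$ entering $R_{h,k}$, giving an $O(h)$ contribution, while the time-quadrature error contributes $O(k)$.

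The only genuinely delicate point in \eqref{InE:10} is the term $\mu k\theta\inpro{\nabla\vecv_{h,k}}{\nabla(\vecm_{h,k}^-\times\vecvarphi)}_{D_T}$ coming from the $\theta$-consistency correction. I would bound it by $ck\theta\|\nabla\vecv_{h,k}\|_{D_T}\,\|\nabla(\vecm_{h,k}^-\times\vecvarphi)\|_{D_T}$ and then absorb it into $O(h+k)$: when $\theta>1/2$, Lemma~\ref{lem:3.2a} gives $k\|\nabla\vecv_{h,k}\|_{D_T}^2\le c$ so that $k\|\nabla\vecv_{h,k}\|_{D_T}\le c\sqrt{k}$; when $\theta=1/2$ the $\theta$-term is absent from \eqref{disE:LLG} after the rearrangement, so nothing is needed; and when $\theta<1/2$ the second estimate in Lemma~\ref{lem:3.2a} with the inverse estimate gives $k\|\nabla\vecv_{h,k}\|_{D_T}\le ckh^{-1}\|\vecv_{h,k}\|_{D_T}$, which is $o(h+k)$ precisely under the hypothesis $k=o(h^2)$. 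This is the place where the regime-dependent conditions \eqref{equ:theta} are used; I expect it to be the main obstacle, since one must track exactly which power of $k$ and $h$ appears and check consistency with Lemma~\ref{lem:3.2a} in each of the three cases.

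For \eqref{wE:Phk}, I would take a test function $\veczeta\in C_T^1(0,T;\CX)$, set $\veczeta_h^{(j)}:=I_{\mY_h}\veczeta(t_{j+1},\cdot)\in\mY_h$ (using $\veczeta(t_J)=\veczeta(T)=0$), substitute into \eqref{disE:NewMax2}, multiply by $k$, and sum over $j$. The discrete time-derivative term, after an Abel summation, becomes $-\mu_0\sum_j\inpro{\vecP_h^{(j+1)}}{\veczeta_h^{(j)}-\veczeta_h^{(j-1)}}_{\wtd D}-\mu_0\inpro{\vecP_h^{(0)}}{\veczeta_h^{(-1)}}_{\wtd D}$ up to boundary contributions, which converges to $\mu_0\inpro{\vecP_{h,k}}{\veczeta_t}_{\wtd D_T}-\mu_0\inpro{\vecP_h^{(0)}}{\veczeta(0,\cdot)}_{\wtd D}$ with an $O(k)$ error using $\|\veczeta_t\|_{C(0,T;\CX)}<\infty$ and \eqref{equ:Phk2}; the curl--curl term is a Riemann sum for $-\inpro{\sig\nabla\times\vecP_{h,k}^+}{\nabla\times\veczeta}_{\wtd D_T}$ with error $O(k)$ plus an $O(h)$ interpolation error from $I_{\mY_h}$; and on the right, the integration by parts $\inpro{\nabla\times(e^{W_kG_h}\vecm_{h,k}^-)}{\nabla\times\veczeta}_{D_T}=\inpro{e^{W_kG_h}\vecm_{h,k}^-}{\nabla\times(\nabla\times\veczeta)}_{D_T}$ is legitimate because $\veczeta\in\CX$ satisfies $(\nabla\times\veczeta)\times\vecn_D=0$ on $\p D$ so the boundary term vanishes (here I would invoke \cite[Corollary 3.20]{Monk03} as earlier in the paper), with the remaining discrepancies again $O(h+k)$ by Lemma~\ref{lem:Fk} and the interpolation estimates. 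Collecting all contributions yields $O(h+k)$ on the right of both \eqref{InE:10} and \eqref{wE:Phk}, which is the claim.
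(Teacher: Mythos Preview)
Your overall strategy --- substitute the interpolated test function into the discrete identity, sum over $j$, and estimate the resulting interpolation and quadrature errors --- is exactly what the paper does, and your treatment of \eqref{wE:Phk} is essentially identical to the paper's (the paper integrates $\veczeta(t,\cdot)$ over each subinterval rather than evaluating at $t_{j+1}$, but this is cosmetic; the use of \cite[Corollary~3.20]{Monk03} to shift the curl is also the same).

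There is, however, a genuine misreading in your handling of \eqref{InE:10}. The term
\[
\mu\,\inpro{\nabla(\vecm_{h,k}^-+k\theta\vecv_{h,k})}{\nabla(\vecm_{h,k}^-\times\vecvarphi)}_{D_T}
\]
is \emph{part of the left-hand side of the identity to be proved}; it is not an error to be absorbed into $O(h+k)$. Your attempt to bound the whole of $\mu k\theta\inpro{\nabla\vecv_{h,k}}{\nabla(\vecm_{h,k}^-\times\vecvarphi)}_{D_T}$ therefore targets the wrong quantity, and in fact the bound you propose does not work as written: for $\theta>1/2$ you obtain $k\|\nabla\vecv_{h,k}\|_{D_T}\le c\sqrt{k}$, and $\sqrt{k}$ is \emph{not} $O(h+k)$ without a coupling between $h$ and $k$. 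Your assertion that ``when $\theta=1/2$ the $\theta$-term is absent from \eqref{disE:LLG} after the rearrangement'' is also incorrect --- the scheme always carries $k\theta\nabla\vecv_h^{(j)}$; what vanishes at $\theta=1/2$ is the coefficient $(2\theta-1)$ in the energy estimate of Lemma~\ref{lem:4.2}, not the term in the scheme.

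Where the conditions \eqref{equ:theta} actually enter is one step earlier, in the \emph{interpolation error}
\[
I_2=\mu\inpro{\nabla(\vecm_{h,k}^-+k\theta\vecv_{h,k})}{\nabla\big(\vecm_{h,k}^-\times\vecvarphi-I_{\mV_h}(\vecm_{h,k}^-\times\vecvarphi)\big)}_{D_T},
\]
which by Lemma~\ref{lem:Ih vh} carries an additional factor of $h$. One then only needs $h\,k\theta\,\|\nabla\vecv_{h,k}\|_{D_T}=O(h+k)$: for $\theta>1/2$ this is $O(h\sqrt{k})=O(h+k)$ by Lemma~\ref{lem:3.2a}; for $\theta\le 1/2$ the inverse estimate gives $\|\nabla\vecv_{h,k}\|_{D_T}\le ch^{-1}\|\vecv_{h,k}\|_{D_T}$, and boundedness of $\|\vecv_{h,k}\|_{D_T}$ (which, for $\theta<1/2$, uses the second part of Lemma~\ref{lem:3.2a} and hence $k=o(h^2)$) yields $h\cdot k\theta\cdot ch^{-1}=O(k)$. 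Once you relocate the role of \eqref{equ:theta} to this interpolation error, your argument goes through and coincides with the paper's.
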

%-----------------------------------------------------------------------
\begin{proof}
\mbox{}

\noindent
\underline{Proof of~\eqref{InE:10}:}
For $t\in[t_j,t_{j+1})$, we use~\eqref{disE:LLG} with 
$\vecw_h^{(j)}=I_{\mV_h}\big(\vecm_{h,k}^-(t,\cdot)\times\vecvarphi(t,\cdot)\big)
\in\mW_h^{(j)}$
to have
\begin{align*}
-\lambda_1
&\inpro{\vecm_{h,k}^-(t,\cdot)\times\vecv_{h,k}(t,\cdot)}
{I_{\mV_h}\big(\vecm_{h,k}^-(t,\cdot)\times\vecvarphi(t,\cdot)\big)}_D \\
&+
\lambda_2\inpro{\vecv_{h,k}(t,\cdot)}
{I_{\mV_h}\big(\vecm_{h,k}^-(t,\cdot)\times\vecvarphi(t,\cdot)\big)}_D\nonumber\\
&+
\mu\inpro{\nabla(\vecm_{h,k}^-(t,\cdot)+k\theta\vecv_{h,k}(t,\cdot))}
{\nabla
I_{\mV_h}\big(\vecm_{h,k}^-(t,\cdot)\times\vecvarphi(t,\cdot)\big)}_D\\
&+
\inpro{R_{h,k}(t_j,\vecm_{h,k}^-(t,\cdot))}
{I_{\mV_h}\big(\vecm_{h,k}^-(t,\cdot)\times\vecvarphi(t,\cdot)\big)}_D\\
&-\mu
\inpro{e^{W_k(t)G_h}\vecP_{h,k}^-(t)}%
{I_{\mV_h}\big(\vecm_{h,k}^-(t,\cdot)\times\vecvarphi(t,\cdot)\big)}_{D}
= 0.
\end{align*}
Integrating both sides of the above equation over $(t_j,t_{j+1})$ and summing
over $j=0,\cdots,J-1$
we deduce
\begin{align*}
&-\lambda_1
\inpro{\vecm_{h,k}^-\times\vecv_{h,k}}
{I_{\mV_h}\big(\vecm_{h,k}^-\times\vecvarphi\big)}_{D_T}
+
\lambda_2\inpro{\vecv_{h,k}}
{I_{\mV_h}\big(\vecm_{h,k}^-\times\vecvarphi\big)}_{D_T}\nonumber\\
&+
\mu\inpro{\nabla(\vecm_{h,k}^-+k\theta\vecv_{h,k})}
{\nabla I_{\mV_h}\big(\vecm_{h,k}^-\times\vecvarphi\big)}_{D_T}
+
\inpro{R_{h,k}(\cdot,\vecm_{h,k}^-)}
{I_{\mV_h}\big(\vecm_{h,k}^-\times\vecvarphi\big)}_{D_T}\nn\\
&-\mu
\inpro{e^{W_kG_h}\vecP_{h,k}^-}{I_{\mV_h}\big(\vecm_{h,k}^-\times\vecvarphi\big)}_{D_T}
= 0.
\end{align*}
This implies
\begin{align*}
&-\lambda_1\inpro{\vecm_{h,k}^-\times\vecv_{h,k}}
{\vecm_{h,k}^-\times\vecvarphi}_{D_T}
+
\lambda_2\inpro{\vecv_{h,k}}
{\vecm_{h,k}^-\times\vecvarphi}_{D_T}\nonumber\\
&+
\mu\inpro{\nabla(\vecm_{h,k}^-+k\theta\vecv_{h,k})}
{\nabla(\vecm_{h,k}^-\times\vecvarphi)}_{D_T}
+
\inpro{R_{h,k}(\cdot,\vecm_{h,k}^-)}
{\vecm_{h,k}^-\times\vecvarphi}_{D_T}\nn\\
&-\mu
\inpro{e^{W_kG_h}\vecP_{h,k}^-}{\vecm_{h,k}^-\times\vecvarphi}_{D_T}
=I_1+I_2+I_3+I_4,
\end{align*}
where
\begin{align*}
I_1
&=
\inpro{-\lambda_1\vecm_{h,k}^-\times\vecv_{h,k}
+
\lambda_2\vecv_{h,k}}
{\vecm_{h,k}^-\times\vecvarphi
-
I_{\mV_h}(\vecm_{h,k}^-\times\vecvarphi)}_{D_T},\\
I_2
&=
\mu\inpro{\nabla(\vecm_{h,k}^-+k\theta\vecv_{h,k})}
{\nabla\big(\vecm_{h,k}^-\times\vecvarphi-
I_{\mV_h}(\vecm_{h,k}^-\times\vecvarphi)\big)}_{D_T},\\
I_3
&=
\inpro{R_{h,k}(.,\vecm_{h,k}^-)}
{\vecm_{h,k}^-\times\vecvarphi
-
I_{\mV_h}(\vecm_{h,k}^-\times\vecvarphi)}_{D_T},\\
I_4&=
-\inpro{e^{W_kG_h}\vecP_{h,k}^-}
{\vecm_{h,k}^-\times\vecvarphi-I_{\mV_h}(\vecm_{h,k}^-\times\vecvarphi))}_{D_T}.
\end{align*}
Hence it suffices to prove that $I_i=O(h+k)$ for $i=1,\cdots,4$.
Firstly, by using Lemma~\ref{lem:mhj} we obtain
\[
\norm{\vecm_{h,k}^-}{\mL^\infty(D_T)}
\le
\sup_{0\le j \le J} \norm{\vecm_h^{(j)}}{\mL^\infty(D)}
\leq
1.
\]
This inequality, Lemma~\ref{lem:3.2a} and Lemma
~\ref{lem:Ih vh} yield
\begin{align*}
|I_1|
&\le
c\left(\norm{\vecm_{h,k}^-}{\mL^\infty(D_T)}+1\right)
\norm{\vecv_{h,k}}{D_T}
\norm{\vecm_{h,k}^-\times\vecvarphi
-
I_{\mV_h}(\vecm_{h,k}^-\times\vecvarphi)}{D_T} \\
&\le
c%\norm{\vecv_{h,k}}{D_T}
\norm{\vecm_{h,k}^-\times\vecvarphi
-
I_{\mV_h}(\vecm_{h,k}^-\times\vecvarphi)}{D_T} 
%\\
%&
\le
ch.
\end{align*}
The bounds for $I_2$, $I_3$ and $I_4$ can be obtained similarly
by using Lemma~\ref{lem:3.2a} and Lemma~\ref{lem:Fk},
respectively, noting that when $\theta\in[0,\frac{1}{2}]$, a bound of
$\left\| \nabla \vecv_{h,k} \right\|_{D_T}$ can be deduced from the inverse
estimate
$
\left\| \nabla \vecv_{h,k} \right\|_{D_T}
\leq
ch^{-1}
\left \| \vecv_{h,k}\right\|_{D_T}.
%\leq ch^{-1}.
$
This completes the proof~\eqref{InE:10}.

\medskip
\noindent
\underline{Proof of~\eqref{wE:Phk}:}
For $t\in[t_j,t_{j+1})$, we use~\eqref{disE:NewMax2} with 
$\veczeta_h(t,\cdot)=I_{\mY_h}\veczeta(t,\cdot)$ to have 
\begin{align*}
\mu_0\inpro{\partial_t\vecP_{h,k}(t,\cdot)}{I_{\mY_h}
\veczeta(t,\cdot)}_{\wtd D}
=
&-
\inpro{\sigma\nabla\times\vecP_{h,k}^+(t,\cdot)}
{\nabla\times I_{\mY_h}\veczeta(t,\cdot)}_{\wtd D}\\
&+
\sigma_D
\inpro{\nabla\times e^{W_k(t)G_h}\vecm_{h,k}^-(t,\cdot)}
{\nabla\times I_{\mY_h}\veczeta(t,\cdot)}_{D}.
\end{align*}
Integrating both sides of the above equation over $(t_j,t_{j+1})$ 
andsumming over $j=0,\cdots,J-1$, and using integration by parts (noting that
$\veczeta_h(T,\cdot)=0$) we deduce
\begin{align*}
\mu_0\inpro{\vecP_{h,k}}{\partial_t \veczeta_h}_{\wtd D_T}
-
\mu_0\inpro{\vecP_h^{(0)}}{\veczeta_h(0,\cdot)}_{\wtd D}
=
&
\inpro{\sigma\nabla\times\vecP_{h,k}^+}
{\nabla\times \veczeta_h}_{\wtd D_T}\\
&-
\sigma_D
\inpro{\nabla\times e^{W_kG_h}\vecm_{h,k}^-}
{\nabla\times \veczeta_h}_{D_T}.
\end{align*}
By using Lemma~\ref{lem:3.2b} and the following error estimate, see 
e.g.~\cite{Monk03},
\begin{equation*}
\|\veczeta(t)-\veczeta_h(t)\|_{\wtd D}
+
h\|\nabla\times(\veczeta(t)-\veczeta_h(t))\|_{\wtd D}
\le
Ch^2
\|\nabla^2\veczeta\|_{\wtd D},\label{InE:IterY}
\end{equation*}
we deduce
\begin{align*}
\mu_0\inpro{\vecP_{h,k}}{
\veczeta_t)}_{\wtd D_T}
-
\mu_0\inpro{\vecP_h^{(0)}}{
\veczeta(0,\cdot)}_{\wtd D}
&-
\inpro{\sigma\nabla\times\vecP_{h,k}^+}
{\nabla\times\veczeta}_{\wtd D_T}\\
&+
\sigma_D
\inpro{\nabla\times e^{W_kG_h}\vecm_{h,k}^-}
{\nabla\times\veczeta}_{D_T}
= O(h).
\end{align*}
Using Green's identity (see~\cite[Corollary 3.20]{Monk03})
we obtain~\eqref{wE:Phk}, completing the proof of the lemma. 
\end{proof}
%-----------------------------------------------------------------------

In the next lemma we show that $\vecv_{h,k}$ can be replaced by
$\pa_t\vecm_{h,k}$, as indeed the latter approximates~$\vecm_t$.
\begin{lemma}\label{lem:3.7b}
 Assume that $h$ and $k$ go to 0 satisfying~\eqref{equ:theta}. Then for 
 any $\vecvarphi \in \CoE{C^\infty(D)}$ and 
$\veczeta \in \CTE{\CX}$, there holds $\mP$-a.s.
 \begin{align}\label{InE:10b}
 &-\lambda_1\inpro{\vecm_{h,k}\times\pa_t\vecm_{h,k}}
 {\vecm_{h,k}\times\vecvarphi}_{D_T}
 +
 \lambda_2\inpro{\pa_t\vecm_{h,k}}
 {\vecm_{h,k}\times\vecvarphi}_{D_T}\nn\\
 &+
 \mu\inpro{\nabla\vecm_{h,k}}
 {\nabla(\vecm_{h,k}\times\vecvarphi)}_{D_T}
 +
 \inpro{R_{h,k}(\cdot,\vecm_{h,k})}
 {\vecm_{h,k}\times\vecvarphi}_{D_T}\nn\\
 &-\mu
\inpro{e^{W_kG_h}\vecP_{h,k}^+}{\vecm_{h,k}\times\vecvarphi}_{D_T}
 = O(h+k);
 \end{align}
 and
\begin{align}\label{wE:Phkb}
\mu_0\inpro{\vecP_{h,k}^+}{\veczeta_t}_{\wtd D_T}
-
\mu_0\inpro{\vecP_h^{(0)}}{\veczeta(0,\cdot)}_{\wtd D}
&-
\inpro{\sig\nabla\times\vecP_{h,k}^+}{\nabla\times\veczeta}_{\wtd D_T}
\nn\\
&+
\sig_D\inpro{e^{W_kG_h}\vecm_{h,k}}{\nabla \times(\nabla\times\veczeta)}_{D_T}
=O(h+k).
\end{align}
 \end{lemma}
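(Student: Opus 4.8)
The plan is to derive~\eqref{InE:10b} and~\eqref{wE:Phkb} from their counterparts~\eqref{InE:10} and~\eqref{wE:Phk} in Lemma~\ref{lem:3.7a} by replacing, one factor at a time, $\vecm_{h,k}^-$ by $\vecm_{h,k}$, $\vecv_{h,k}$ by $\pa_t\vecm_{h,k}$, and $\vecP_{h,k}^-$ by $\vecP_{h,k}^+$, and by discarding the $k\theta\vecv_{h,k}$ contribution to the Dirichlet term. Each arising difference is to be shown, $\mP$-a.s., to tend to $0$ as $h,k\to0$ under~\eqref{equ:theta}; the implied constants are deterministic since $e^{W_kG_h}$, $C_h$ and $R_{h,k}$ depend on $W$ only through the bounded quantities $\sin W_k$, $\cos W_k$ (cf.\ Lemma~\ref{lem:Fk}).

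For the two lower-order terms of~\eqref{InE:10} I would bound $\inpro{(\vecm_{h,k}^--\vecm_{h,k})\times\vecv_{h,k}}{\vecm_{h,k}^-\times\vecvarphi}_{D_T}$ and $\inpro{\vecv_{h,k}}{(\vecm_{h,k}^--\vecm_{h,k})\times\vecvarphi}_{D_T}$ by $c\,\norm{\vecm_{h,k}^--\vecm_{h,k}}{D_T}\,\norm{\vecv_{h,k}}{D_T}\le ck$, using $\norm{\vecm_{h,k}^-}{\mL^\infty(D_T)}\le1$ (Lemma~\ref{lem:mhj}), $\norm{\vecv_{h,k}}{D_T}\le c$ (Lemma~\ref{lem:3.2a}) and~\eqref{equ:mhk mhkm}; and then $\inpro{\vecm_{h,k}\times(\vecv_{h,k}-\pa_t\vecm_{h,k})}{\vecm_{h,k}\times\vecvarphi}_{D_T}$ and $\inpro{\vecv_{h,k}-\pa_t\vecm_{h,k}}{\vecm_{h,k}\times\vecvarphi}_{D_T}$ by $c\,\norm{\vecvarphi}{\mL^\infty(D_T)}\,\norm{\vecv_{h,k}-\pa_t\vecm_{h,k}}{\mL^1(D_T)}\le ck$, which is precisely where the $\mL^1$-estimate~\eqref{equ:vhk mhk} is used (recall $\norm{\vecm_{h,k}}{\mL^\infty(D_T)}\le1$ as well, by Lemma~\ref{lem:mhj}). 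The $R_{h,k}$-term I would split as $\inpro{R_{h,k}(\cdot,\vecm_{h,k}^-)}{(\vecm_{h,k}^--\vecm_{h,k})\times\vecvarphi}_{D_T}+\inpro{R_{h,k}(\cdot,\vecm_{h,k}^-)-R_{h,k}(\cdot,\vecm_{h,k})}{\vecm_{h,k}\times\vecvarphi}_{D_T}$: the first piece is $O(k)$ by Lemma~\ref{lem:Fk} (so that $\norm{R_{h,k}(\cdot,\vecm_{h,k}^-)}{D_T}\le c$) and~\eqref{equ:mhk mhkm}; for the second, since $\vecm_{h,k}\times\vecvarphi\in\mL^\infty(D_T)$ and $R_{h,k}$ is polynomial in its (bounded) spatial argument and its gradient, I would use $\norm{R_{h,k}(\cdot,\vecm_{h,k}^-)-R_{h,k}(\cdot,\vecm_{h,k})}{\mL^1(D_T)}\le c\big(\norm{\vecm_{h,k}^--\vecm_{h,k}}{D_T}+\norm{\nabla(\vecm_{h,k}^--\vecm_{h,k})}{D_T}\big)$ (with the $\mH^1(D_T)$-bounds of Lemmas~\ref{lem:3.2a} and~\ref{lem:3.4}), the second summand being handled together with the Dirichlet term below. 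The $\vecP$-term is handled by $\norm{\vecP_{h,k}^--\vecP_{h,k}^+}{\wtd D_T}\le c\sqrt k$ (Lemma~\ref{lem:3.2b}, \eqref{equ:Phk2}) together with the contraction $\norm{e^{W_kG_h}\vecu}{D}\le\norm{\vecu}{D}$ (Lemma~\ref{lem:Fk}).

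The delicate term is the Dirichlet term $\mu\inpro{\nabla(\vecm_{h,k}^-+k\theta\vecv_{h,k})}{\nabla(\vecm_{h,k}^-\times\vecvarphi)}_{D_T}$, which must become $\mu\inpro{\nabla\vecm_{h,k}}{\nabla(\vecm_{h,k}\times\vecvarphi)}_{D_T}$. On each $[t_j,t_{j+1})$ I would write $\vecm_{h,k}^-+k\theta\vecv_{h,k}-\vecm_{h,k}=\big(k\theta-(t-t_j)\big)\vecv_{h,k}-k^{-1}(t-t_j)\big(\vecm_h^{(j+1)}-\vecm_h^{(j)}-k\vecv_h^{(j)}\big)$ and, using $|k\theta-(t-t_j)|\le k$ together with the product rule for $\nabla(\vecm_{h,k}^-\times\vecvarphi)$ and $\nabla\big((\vecm_{h,k}^--\vecm_{h,k})\times\vecvarphi\big)$, reduce the whole comparison (in particular $\norm{\nabla(\vecm_{h,k}^--\vecm_{h,k})}{D_T}$, which settles the loose end of the previous paragraph) to $ck\,\norm{\nabla\vecv_{h,k}}{D_T}$ plus the contribution of the nodal normalisation defect $\vecm_h^{(j+1)}-\vecm_h^{(j)}-k\vecv_h^{(j)}$ of Step~3 of Algorithm~\ref{Algo:1}; the latter I would control through inverse estimates and Lemma~\ref{lem:4.2}, following~\cite{BNT13}. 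The quantity $k\,\norm{\nabla\vecv_{h,k}}{D_T}$ is $O(\sqrt k)$ when $\theta\in(1/2,1]$, by the energy bound $k(2\theta-1)\norm{\nabla\vecv_{h,k}}{D_T}^2\le c$ of Lemma~\ref{lem:3.2a}, and is $O(kh^{-1})$ when $\theta\in[0,1/2]$, by the inverse estimate $\norm{\nabla\vecv_{h,k}}{D_T}\le ch^{-1}\norm{\vecv_{h,k}}{D_T}$; in both cases it tends to $0$ under~\eqref{equ:theta}. Getting this — and, in particular, keeping the normalisation defect under control when $\theta=1/2$ — to vanish is the main obstacle; the remaining identity~\eqref{wE:Phkb} is then routine, since $\vecP_{h,k}\to\vecP_{h,k}^+$ in the first term costs $O(\sqrt k)$ by~\eqref{equ:Phk2}, $\vecm_{h,k}^-\to\vecm_{h,k}$ inside $e^{W_kG_h}$ costs $O(k)$ by~\eqref{equ:mhk mhkm}, and the fixed test function $\veczeta$ is smooth enough for $\veczeta_t$ and $\nabla\times(\nabla\times\veczeta)$ to be bounded since $\veczeta(t)\in\CX$.
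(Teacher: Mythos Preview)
Your proposal is correct and follows the same approach as the paper: start from Lemma~\ref{lem:3.7a} and show that each replacement (of $\vecm_{h,k}^-$ by $\vecm_{h,k}$, of $\vecv_{h,k}$ by $\pa_t\vecm_{h,k}$, of $\vecP_{h,k}^-$ by $\vecP_{h,k}^+$, and dropping $k\theta\vecv_{h,k}$) contributes a vanishing error. The paper organises this into five differences $I_1,\ldots,I_5$ for~\eqref{InE:10b} and $I_6,I_7$ for~\eqref{wE:Phkb}, treats $I_1$ and $I_5$ in detail exactly as you do, and dismisses $I_2,I_3,I_4$ with ``similarly''; your write-up is in fact more careful than the paper's on the Dirichlet term $I_3$, where your decomposition on $[t_j,t_{j+1})$ and the case split on $\theta$ for $k\norm{\nabla\vecv_{h,k}}{D_T}$ are precisely what is needed, and the normalisation defect is indeed absorbed by the arguments behind Lemma~\ref{lem:3.4} (i.e.\ \cite[Lemma~6.3]{BNT13}). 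One cosmetic point: the $\vecP$-difference yields $O(\sqrt{k})$ rather than $O(k)$ (the paper writes $ck$ somewhat loosely), so here and in the Dirichlet term the $O(h+k)$ on the right should be read as ``$\to0$ under~\eqref{equ:theta}'', which is all that is used downstream.
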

%-----------------------------------------------------------------------
\begin{proof}
\mbox{}

\noindent
\underline{Proof of~\eqref{InE:10b}:}
From~\eqref{InE:10} it follows that
\begin{align*}
&-\lambda_1\inpro{\vecm_{h,k}\times\pa_t\vecm_{h,k}}
 {\vecm_{h,k}\times\vecvarphi}_{D_T}
 +
 \lambda_2\inpro{\pa_t\vecm_{h,k}}
 {\vecm_{h,k}\times\vecvarphi}_{D_T}\nn\\
 &+
 \mu\inpro{\nabla(\vecm_{h,k})}
 {\nabla(\vecm_{h,k}\times\vecvarphi)}_{D_T}
 +
 \inpro{R_{h,k}(\cdot,\vecm_{h,k})}
 {\vecm_{h,k}\times\vecvarphi}_{D_T}\nn\\
 &-\mu
\inpro{e^{W_kG_h}\vecP_{h,k}^+}{\vecm_{h,k}\times\vecvarphi}_{D_T}
 = 
 O(h+k)
 +
I_1+\cdots+I_5,
\end{align*}
where
\begin{align*}
I_1
&=
\lambda_1\inpro{\vecm_{h,k}^-\times\vecv_{h,k}}
{\vecm_{h,k}^-\times\vecvarphi}_{D_T}
-
\lambda_1\inpro{\vecm_{h,k}\times\pa_t\vecm_{h,k}}
{\vecm_{h,k}\times\vecvarphi}_{D_T},\\
I_2
&=
-\lambda_2\inpro{\vecv_{h,k}}
{\vecm_{h,k}^-\times\vecvarphi}_{D_T}
+
\lambda_2\inpro{\pa_t\vecm_{h,k}}
{\vecm_{h,k}\times\vecvarphi}_{D_T}
,\\
I_3
&=
-\mu\inpro{\nabla(\vecm_{h,k}^-+k\theta\vecv_{h,k})}
{\nabla(\vecm_{h,k}^-\times\vecvarphi)}_{D_T}
+
\mu\inpro{\nabla(\vecm_{h,k})}
{\nabla(\vecm_{h,k}\times\vecvarphi)}_{D_T}
,\\
I_4
&=
-\inpro{R_{h,k}(.,\vecm_{h,k}^-)}
{\vecm_{h,k}^-\times\vecvarphi}_{D_T}
+
\inpro{R_{h,k}(\cdot,\vecm_{h,k})}
{\vecm_{h,k}\times\vecvarphi}_{D_T},\\
I_5
&=
-\mu
\inpro{e^{W_kG_h}\vecP_{h,k}^+}{\vecm_{h,k}\times\vecvarphi}_{D_T}
+
\mu
\inpro{e^{W_kG_h}\vecP_{h,k}^-}{\vecm_{h,k}^-\times\vecvarphi}_{D_T}
.
\end{align*}
Hence it suffices to prove that $I_i=O(k)$ for $i=1,\cdots,5$. 

First, by using the triangle inequality we obtain
\begin{align*}
\lambda_1^{-1}|I_1|
&\leq
\left|
\inpro{(\vecm_{h,k}^--\vecm_{h,k})\times\vecv_{h,k}}
{\vecm_{h,k}^-\times\vecvarphi}_{D_T}
\right|
%\\
%&\quad
+
\left|
\inpro{\vecm_{h,k}\times\vecv_{h,k}}
{(\vecm_{h,k}^--\vecm_{h,k})\times\vecvarphi}_{D_T}
\right|\\
&\quad+
\left|
\inpro{\vecm_{h,k}\times(\vecv_{h,k}-\pa_t\vecm_{h,k})}
{\vecm_{h,k}\times\vecvarphi}_{D_T}
\right|\\
&\leq
2\norm{\vecm_{h,k}^--\vecm_{h,k}}{D_T}
\norm{\vecv_{h,k}}{D_T}
\norm{\vecm_{h,k}^-}{\mL^{\infty}(D_T)}
\norm{\vecvarphi}{\mL^{\infty}(D_T)}\\
&\quad+
\norm{\vecv_{h,k}-\pa_t\vecm_{h,k}}{\mL^1(D_T)}
\norm{\vecm_{h,k}^-}{\mL^{\infty}(D_T)}
\norm{\vecvarphi}{\mL^{\infty}(D_T)}.
\end{align*}
Therefore, the bound of $I_1$ can be obtained by using Lemmas~\ref{lem:3.2a}
and~\ref{lem:3.4}. The bounds for $I_2, I_3$ and $I_4$  can be obtained
similarly.

Finally, using~\eqref{equ:boundP}, Lemmas~\ref{lem:3.2a} and~\ref{lem:3.2b} 
we obtain
\begin{align*}
\mu^{-1}|I_5|
&\leq
\left|
\inpro{e^{W_kG_h}(\vecP_{h,k}^+-\vecP_{h,k}^-)}{\vecm_{h,k}\times\vecvarphi}_{D_T}
\right|
%\\
%&\quad
+
\left|
\inpro{e^{W_kG_h}\vecP_{h,k}^-}{(\vecm_{h,k}-\vecm_{h,k}^-)\times\vecvarphi}_{D_T}
\right|\\
&\leq
\norm{e^{W_kG_h}(\vecP_{h,k}^+-\vecP_{h,k}^-)}{D_T}
\norm{\vecm_{h,k}}{D_T}
\norm{\vecvarphi}{\mL^{\infty}(D_T)}\\
&\quad
+
\norm{e^{W_kG_h}\vecP_{h,k}^-}{D_T}
\norm{\vecm_{h,k}-\vecm_{h,k}^-}{D_T}
\norm{\vecvarphi}{\mL^{\infty}(D_T)}\\
&\leq
c\norm{\vecP_{h,k}^+-\vecP_{h,k}^-}{D_T}
+
c\norm{\vecP_{h,k}^-}{D_T}
\norm{\vecm_{h,k}-\vecm_{h,k}^-}{D_T}
%\\
%&
\leq 
ck.
\end{align*}
This completes the proof of~\eqref{InE:10b}.

\medskip
\noindent
\underline{Proof of~\eqref{wE:Phkb}:}
It follows from~\eqref{wE:Phk} that
\begin{align*}
\mu_0\inpro{\vecP_{h,k}^+}{\veczeta_t}_{\wtd D_T}
&-
\mu_0\inpro{\vecP_h^{(0)}}{\veczeta(0,\cdot)}_{\wtd D}
-
\inpro{\sig\nabla\times\vecP_{h,k}^+}{\nabla\times\veczeta}_{\wtd D_T}\nn\\
&+
\sig_D\inpro{e^{W_kG_h}\vecm_{h,k}}{\nabla \times(\nabla\times\veczeta)}_{D_T}
=O(h+k)
+I_6+I_7,
\end{align*}
where
\begin{align*}
I_6
&=
\mu_0\inpro{\vecP_{h,k}^+-\vecP_{h,k}}{\veczeta_t}_{\wtd
D_T},
\nn\\
I_7
&=
\sig\inpro{e^{W_kG_h}(\vecm_{h,k}^--\vecm_{h,k})}%
{\nabla \times(\nabla\times\veczeta)}_{D_T}.
\end{align*}
By using~\eqref{equ:mhk mhkm} and~\eqref{equ:Phk2} we obtain that $I_i=O(k)$
for $i=6,7$.
This completes the proof of~\eqref{wE:Phkb}.
\end{proof}
%-----------------------------------------------------------------------
In order to prove the $\mP$-a.s. convergence of random variables 
$\vecm_{h,k}$ and $\vecP_{h,k}^+$, we first show that the
family $\cL(\vecm_{h,k})$ and $\cL(\vecP_{h,k}^+)$ are tight. 
%--------------------------------------------------------
\begin{lemma}\label{lem:tig}
Assume that $h$ and $k$ go to 0 satisfying~\eqref{equ:theta}.
Then the set of laws 
$\{\cL(\vecm_{h,k},\vecP_{h,k}^+,W_k)\}$ on the space
$C\big(0,T;\mH^{-1}(D)\big) \times H^{-1}(\wtd D_T) \times \D(0,T)$ 
is tight. Here, $\D(0,T)$ is the Skorokhod space; see e.g.~\cite{Bill99}.
\end{lemma}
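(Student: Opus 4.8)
The plan is to establish tightness of the three marginal laws $\{\cL(\vecm_{h,k})\}$, $\{\cL(\vecP_{h,k}^+)\}$ and $\{\cL(W_k)\}$ separately, and then to conclude for the joint law using that a product of compact sets is compact together with the bound $\mathbb{P}\big((\vecm_{h,k},\vecP_{h,k}^+,W_k)\notin K_1\times K_2\times K_3\big)\le\sum_{i=1}^3\mathbb{P}(\,\cdot\,\notin K_i)$. The crucial point is that the a priori estimates of Lemmas~\ref{lem:mhj}, \ref{lem:4.2}, \ref{lem:3.2a}, \ref{lem:3.2b} and~\ref{lem:3.4} involve only \emph{deterministic} constants, independent of $h$ and $k$; consequently the first two families will actually be supported, $\mathbb{P}$-almost surely, on one fixed compact set, so that tightness holds in the strongest possible sense.

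For the $\vecm$-component, Lemma~\ref{lem:mhj} gives $\sup_t\norm{\vecm_{h,k}(t)}{\mL^2(D)}\le|D|$, and Lemma~\ref{lem:3.4} gives a deterministic bound $\norm{\vecm_{h,k}}{\mH^1(D_T)}\le c$, in particular $\norm{\partial_t\vecm_{h,k}}{\mL^2(D_T)}\le c$. Since $\vecm_{h,k}$ is piecewise affine in time (Definition~\ref{def:mhk}), Cauchy--Schwarz in the time variable gives $\norm{\vecm_{h,k}(t)-\vecm_{h,k}(s)}{\mL^2(D)}\le c\,|t-s|^{1/2}$ for all $s,t\in[0,T]$. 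Thus $\{\vecm_{h,k}\}$ is bounded, with deterministic constants, in $C^{0,1/2}\big([0,T];\mL^2(D)\big)$ and pointwise in time in $\mL^2(D)$. Using the compact embedding $\mL^2(D)\hookrightarrow\hookrightarrow\mH^{-1}(D)$, the Arzel\`a--Ascoli theorem for $C\big(0,T;\mH^{-1}(D)\big)$ (uniform equicontinuity in $\mH^{-1}(D)$, plus relative compactness of $\{\vecm_{h,k}(t)\}$ in $\mH^{-1}(D)$ at each fixed $t$) shows that all these functions lie in one fixed compact set $K_1\subset C\big(0,T;\mH^{-1}(D)\big)$, so $\mathbb{P}(\vecm_{h,k}\in K_1)=1$ for every $(h,k)$ satisfying~\eqref{equ:theta}.

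For the $\vecP$-component, Lemma~\ref{lem:3.2b} provides a deterministic bound $\norm{\vecP_{h,k}^+}{\wtd D_T}\le c$. Since $\wtd D_T$ is a bounded Lipschitz domain, the Rellich embedding $H^1_0(\wtd D_T)\hookrightarrow\hookrightarrow L^2(\wtd D_T)$ is compact, hence so is its adjoint $\mL^2(\wtd D_T)\hookrightarrow\hookrightarrow H^{-1}(\wtd D_T)$; therefore the fixed ball $\{\vecu:\norm{\vecu}{\wtd D_T}\le c\}$ has compact closure $K_2$ in $H^{-1}(\wtd D_T)$, and $\mathbb{P}(\vecP_{h,k}^+\in K_2)=1$ for every admissible $(h,k)$. (Only the $\mL^2$-bound is needed here; the uniform bound on $\nabla\times\vecP_{h,k}^+$ in Lemma~\ref{lem:3.2b} will be used later, when identifying the limit as a solution of the Maxwell part.) Finally, for $W_k$: since $W$ has $\mathbb{P}$-a.s.\ continuous paths on $[0,T]$, $\sup_{t\in[0,T]}|W_k(t)-W(t)|\le\sup\{|W(s)-W(t)|:s,t\in[0,T],\,|s-t|\le k\}\to0$ $\mathbb{P}$-a.s.\ as $k\to0$, so $W_k\to W$ $\mathbb{P}$-a.s.\ in $C([0,T])$ and hence in the Skorokhod space $\D(0,T)$; in particular $\cL(W_k)\to\cL(W)$ weakly, and a weakly convergent sequence of Borel probability measures on the Polish space $\D(0,T)$ is tight.

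Combining the three marginals yields tightness of $\{\cL(\vecm_{h,k},\vecP_{h,k}^+,W_k)\}$ on the product space. I expect no genuinely probabilistic obstacle in this argument, precisely because the deterministic a priori bounds already confine the first two components to fixed compact sets; the only point requiring care is the choice of function spaces in the Arzel\`a--Ascoli/Aubin--Lions step for $\vecm_{h,k}$, where one wants exactly the $\tfrac12$-H\"older regularity in time coming from $\partial_t\vecm_{h,k}\in\mL^2(D_T)$ together with pointwise-in-time relative compactness in the target space $\mH^{-1}(D)$.
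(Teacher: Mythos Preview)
Your argument is correct and follows the same strategy the paper invokes by citation: deterministic a~priori bounds plus compact embeddings for $\vecm_{h,k}$ and $\vecP_{h,k}^+$, and a standard tightness argument for $W_k$ in $\D(0,T)$. One small wording issue: $W_k$ is piecewise constant, so it does not lie in $C([0,T])$; what you actually establish is $\sup_{t}|W_k(t)-W(t)|\to0$ $\mP$-a.s., which, since the limit $W$ is continuous, implies convergence in $\D(0,T)$ --- your conclusion is unaffected.
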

%-----------------------------------------------------------------------
\begin{proof}
Firstly,
from Definition~\ref{Def:W}, the approximation $W_k$ of the Wiener process 
 $W$ belongs to~$\D(0,T)$.
The tightness of~$\{\cL(W_k)\}$ in $\D(0,T)$ is proved 
in~\cite[Theorem~2.5.6]{Bill99}.
%For $r\in\R^+$, we define 
%\begin{align*}
%B_r :=
%\{\vecu\in\mH^1(D_T)\, :\,\norm{\vecu}{H^1\big((0,T);\mH^1(D)\big)}\leq r\}.
%\end{align*}
%Firstly, from the definition of $\cL(\vecm_{h,k})$ we have
%\begin{align*}
%\cL(\vecm_{h,k})(B_r)
%&=
%\mP\{\omega\in\Omega\,:\,\vecm_{h,k}(\omega)\in B_r \}
%=
%1-\mP\{\omega\in\Omega\,:\,\vecm_{h,k}(\omega)\in B_r^c \},
%\end{align*}
%where $B_r^c$ is the complement of $B_r$ in $\mH^1(D_T)$.
%Furthermore, from the definition of $B_r$ and~\eqref{equ:mhk h1}, we deduce
%\begin{align*}
%\cL(\vecm_{h,k})(B_r)
%&\geq
%1
%-\frac{1}{r^2}
%\int_{\Omega} \norm{\vecm_{h,k}(\omega)}%
%{H^1\big((0,T);\mH^1(D)\big)}^2 \mP\, d\omega\\
%&\geq
%1-\frac{c}{r^2}.
%\end{align*}
%From the above inequality and~\cite[Proposition 2.2]{DaPrato92}, it follows that 
%the sets of law~$\{\cL(\vecm_{h,k})\}$ on $H^1\big((0,T);\mH^1(D)\big)$ is tight.
The tightness of $\{\cL(\vecm_{h,k})\}$ on $C\big(0,T;\mH^{-1}(D)\big)$ 
and of $\{\cL(\vecP_{h,k}^+)\}$ on $ H^{-1}(\wtd D_T)$ can be obtained as in the
proof of \cite[Lemma 6.6]{BNT13} and is therefore omitted.
\end{proof}
%-----------------------------------------------------------------------
 The following proposition is a consequence of the tightness of 
 $\{\cL(\vecm_{h,k})\}$, $\{\cL(\vecP_{h,k}^+)\}$ and  $\{\cL(W_k)\}$.
 %-----------------------------------------------------------
% \begin{proposition}\label{pro:con}
% Assume that $h$ and $k$ go to 0 satisfying~\eqref{equ:theta}.
% Then there exist 
% \begin{enumerate}
% \renewcommand{\labelenumi}{(\alph{enumi})}
% \item
% a probability space $(\Omega',\cF',\mP')$,
% \item
% a sequence $\{(\vecm'_{h,k},\vecP'_{h,k},W_k')\}$ of random variables
% defined on $(\Omega',\cF',\mP')$ and taking values in
%$\mH^1(D_T)\times\mL^2(\wtd D_T)\times \D(0,T)$,
% \item
% a random variable $(\vecm',\vecP',W')$ defined on
% $(\Omega',\cF',\mP')$ and taking values in $\mH^1(D_T)\times
% \mL^2(\wtd D_T)\times\D(0,T)$,
% \end{enumerate}
% satisfying
% \begin{enumerate}
% \item\label{item:a}
% $\cL(\vecm_{h,k}) = \cL(\vecm_{h,k}')$, 
% $\cL(\vecP_{h,k}) = \cL(\vecP_{h,k}')$
% \item\label{item:b}
% $\vecm_{h,k}'\goto\vecm'$ in $\mH^1(D_T)$ strongly,
% $\mP'$-a.s.,
% \item\label{item:b'}
% $\vecP_{h,k}'\goto\vecP'$ in $\mL^2(\wtd D_T)$ strongly,
% $\mP'$-a.s.,
% \item\label{item:c}
% $W_k'\goto W'$ in $\D(0,T)$ strongly, $\mP'$-a.s.
% \end{enumerate}
% \end{proposition}
%%------------------------------------------------------------------------
\begin{proposition}\label{pro:con}
Assume that $h$ and $k$ go to 0 satisfying~\eqref{equ:theta}.
Then there exist
\begin{enumerate}
\renewcommand{\labelenumi}{(\alph{enumi})}
\item
a probability space $(\Omega',\cF',\mP')$,
\item
a sequence $\{(\vecm'_{h,k},\vecP'_{h,k},W_k')\}$ of random variables
defined on $(\Omega',\cF',\mP')$ and taking values in the space 
$C\big(0,T;\mH^{-1}(D)\big)\times \mH^{-1}(\wtd D_T) \times\D(0,T)$, 
\item
a random variable $(\vecm',\vecP',W')$ defined on
$(\Omega',\cF',\mP')$ and taking values in 
$C\big([0,T];\mH^{-1}(D)\big)\times \mH^{-1}(\wtd D_T) \times \D(0,T)$,
\end{enumerate}
satisfying
\begin{enumerate}
\item\label{item:a}
$\cL(\vecm_{h,k},\vecP_{h,k}^+,W_k) = \cL(\vecm_{h,k}',\vecP'_{h,k},W_k')$,
\item\label{item:b}
$\vecm_{h,k}'\goto\vecm'$ in $C\big(0,T;\mH^{-1}(D)\big)$ strongly,
$\mP'$-a.s.,
\item\label{item:b2}
$\vecP_{h,k}'\goto\vecP'$ in $\mH^{-1}(\wtd D_T)$ strongly,
$\mP'$-a.s.,
\item\label{item:c}
$W_k'\goto W'$ in $\D(0,T)$ $\mP'$-a.s.
\end{enumerate}

Moreover, the sequence $\{\vecm'_{h,k}\}$ and $\{\vecP'_{h,k}\}$ satisfy $\mathbb P^\prime$-a.s. 
\begin{align}
\norm{\vecm_{h,k}'(\omega^\prime)}{\mH^1(D_T)}
&\leq c, \label{equ:mhk' h1} \\
\norm{\vecm_{h,k}'(\omega^\prime)}{\mL^{\infty}(D_T)}
&\leq c, 
\label{mhk'Linf} \\
\norm{|\vecm_{h,k}'(\omega^\prime)|-1}{\mL^2(D_T)} &\leq c(h+k),\label{equ:mhk'1}\\
\text{and}\quad 
\norm{\vecP'_{h,k}(\omega)}{L^2(0,T;\mH(\curl;\wtd D))}
&\leq c.\label{equ:Phk'}
\end{align}
\end{proposition}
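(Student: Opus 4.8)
The plan is to combine Prokhorov's theorem with the Skorokhod representation theorem, and then to transport the pathwise bounds already established for the original sequence to the new probability space by means of the equality of laws.

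First I would record that, by Lemma~\ref{lem:tig}, the family $\{\cL(\vecm_{h,k},\vecP_{h,k}^+,W_k)\}$ is tight on the product space $X:=C\big(0,T;\mH^{-1}(D)\big)\times\mH^{-1}(\wtd D_T)\times\D(0,T)$, which is Polish, since $\D(0,T)$ is Polish for the $J_1$-topology and the two remaining factors are separable Banach spaces. By Prokhorov's theorem there is then a subsequence (not relabelled) along which these laws converge weakly to some probability measure $\nu$ on $X$. Since $X$ is Polish, the Skorokhod representation theorem (see e.g.~\cite{Bill99}) produces a probability space $(\Omega',\cF',\mP')$ carrying $X$-valued random variables $(\vecm'_{h,k},\vecP'_{h,k},W'_k)$ and $(\vecm',\vecP',W')$ with $\cL(\vecm'_{h,k},\vecP'_{h,k},W'_k)=\cL(\vecm_{h,k},\vecP_{h,k}^+,W_k)$, $\cL(\vecm',\vecP',W')=\nu$, and $(\vecm'_{h,k},\vecP'_{h,k},W'_k)\to(\vecm',\vecP',W')$ $\mP'$-a.s.\ in $X$. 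This yields~\eqref{item:a} together with the almost sure convergences~\eqref{item:b}--\eqref{item:c}.

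It then remains to deduce the uniform bounds~\eqref{equ:mhk' h1}--\eqref{equ:Phk'}. The starting point is that, by Lemmas~\ref{lem:mhj}, \ref{lem:3.2a}, \ref{lem:3.2b} and~\ref{lem:3.4}, the original sequence already satisfies $\mP$-a.s., with a \emph{deterministic} constant $c$, the estimates $\norm{\vecm_{h,k}}{\mH^1(D_T)}\le c$, $\norm{\vecm_{h,k}}{\mL^\infty(D_T)}\le c$, $\norm{|\vecm_{h,k}|-1}{\mL^2(D_T)}\le c(h+k)$ and $\norm{\vecP_{h,k}^+}{L^2(0,T;\mH(\curl;\wtd D))}\le c$. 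I would then view each of these strong norms as a functional on the ambient weak space $X$, extended by the value $+\infty$ off the corresponding subspace. Each such functional is lower semicontinuous on $X$, being a countable supremum of continuous functionals obtained by integrating against a fixed dense sequence of smooth, compactly supported test fields (with integration by parts used to shift the space and time derivatives onto the test fields). Hence its sublevel sets are closed, and therefore Borel, subsets of $X$, and~\eqref{item:a} forces $(\vecm'_{h,k},\vecP'_{h,k})$ to lie in each such sublevel set with $\mP'$-probability one; this is exactly~\eqref{equ:mhk' h1}--\eqref{equ:Phk'}.

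The step I expect to be the main obstacle --- although it is routine once set up correctly --- is precisely this measurability point: one must check that the ``strong'' norms $\norm{\cdot}{\mH^1(D_T)}$, $\norm{\cdot}{\mL^\infty(D_T)}$ and $\norm{\cdot}{L^2(0,T;\mH(\curl;\wtd D))}$ genuinely define Borel functionals on the larger, weaker spaces $C\big(0,T;\mH^{-1}(D)\big)$ and $\mH^{-1}(\wtd D_T)$ in which $\vecm_{h,k}$ and $\vecP_{h,k}^+$ are realised as random variables, so that the pathwise bounds may be legitimately transported through the equality of laws. Once each such norm is expressed as a countable supremum of continuous functionals on $X$, lower semicontinuity --- and hence Borel measurability of the sublevel sets --- is immediate, and the rest of the argument is a direct application of Lemma~\ref{lem:tig}, the theorems of Prokhorov and Skorokhod, and the bounds from Lemmas~\ref{lem:mhj}, \ref{lem:3.2a}, \ref{lem:3.2b} and~\ref{lem:3.4}.
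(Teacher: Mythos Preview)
Your proposal is correct and follows essentially the same route as the paper: tightness from Lemma~\ref{lem:tig}, Prokhorov to extract a weakly convergent subsequence, Skorokhod's representation theorem on the Polish product space, and then transport of the deterministic pathwise bounds via equality of laws. The only point of difference is the measurability step for the strong norms: the paper invokes the Kuratowski theorem to conclude that Borel subsets of $\mH^1(D_T)$, $\mH^1(D_T)\cap\mL^\infty(D_T)$ and $L^2(0,T;\mH(\curl;\wtd D))$ are Borel in the ambient weaker spaces, whereas you argue via lower semicontinuity of each norm as a countable supremum of continuous linear functionals; both routes are standard and yield the same conclusion.
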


\begin{proof}
By Lemma~\ref{lem:tig} and the Donsker theorem~\cite[Theorem 8.2]{Bill99},
the family of probability measures $\{\cL(\vecm_{h,k},\vecP_{h,k}^+,W_k)\}$ is tight on 
$C\big(0,T;\mH^{-1}(D)\big)\times \mH^{-1}(\wtd D_T) \times\D(0,T)$. 
Then by Theorem 5.1 in~\cite{Bill99} the family of measures $\{\cL(\vecm_{h,k},\vecP_{h,k}^+,W_k)\}$ 
is relatively compact on $C\big(0,T;\mH^{-1}(D)\big)\times \mH^{-1}(\wtd D_T) \times\D(0,T)$, 
that is there exists a subsequence, still denoted by $\{\cL(\vecm_{h,k},\vecP_{h,k}^+,W_k)\}$, 
such that $\{\cL(\vecm_{h,k},\vecP_{h,k}^+,W_k)\}$ converges weakly. 
Hence, the existence of (a)--(c) satisfying (1)--(4) follows immediately from the 
Skorokhod Theorem~\cite[Theorem 6.7]{Bill99} since $C\big(0,T;\mH^{-1}(D)\big)\times \mH^{-1}(\wtd D_T) \times\D(0,T)$ 
is a separable metric space.

We note that from the Kuratowski theorem, the Borel subsets of $\mH^1(D_T)$ or $\mH^1(D_T)\cap \mL^{\infty}(D_T)$ 
are Borel subsets of $C\big(0,T;\mH^{-1}(D)\big)$ and the Borel subsets of $L^2(0,T;\mH(\curl;\wtd D))$ are 
Borel subsets of $\mH^{-1}(\wtd D_T)$. 
The estimates~\eqref{equ:mhk' h1}--\eqref{equ:Phk'} are direct consequences of Lemmas~\ref{lem:3.2b}--\ref{lem:3.4} and 
the equality of laws stated in part (1).
\end{proof}
%------------------------------------------------------------------------
 We now ready to prove the main result of this paper.
\begin{theorem}\label{the:mai 2}
Assume that $T>0$, $\vecM_0\in\mH^2(D)$ and $\vecg\in\mW^{2,\infty}(D)$
satisfy~\eqref{equ:m0} and \eqref{equ:g 1}, respectively. 
Then $\vecm'$, $\vecP'$, $W'$, the sequences $\{\vecm'_{h,k}\}$, $\{\vecP'_{h,k}\}$ 
and the probability space $(\Omega',\cF',\mP')$  given by Proposition~\ref{pro:con} satisfy
\begin{enumerate}
\item\label{ite:the1}
the sequence $\{\vecm'_{h,k}\}$ converges to $\vecm'$
weakly in $\mH^1(D_T)$, $\mP'$-a.s.
\item\label{ite:the3}
the sequence $\{\vecP'_{h,k}\}$ converges to $\vecP'$
weakly in $L^2(0,T;\mH(\curl; \wtd D)$, $\mP'$-a.s.
\item\label{ite:the2}
$\big(\Omega',\cF',(\cF'_t)_{t\in[0,T]},\mP',W',\vecM',\vecP'\big)$ is a weak
martingale solution of~\eqref{E:1.1},
where 
\[
\vecM'(t):=e^{W'(t)G}\vecm'(t)\quad \forall t\in[0,T],\text{ a.e. } \vecx\in D.
\] 
\end{enumerate}
\end{theorem}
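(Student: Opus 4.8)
The plan is to proceed in three stages: upgrade the convergences supplied by Proposition~\ref{pro:con} to weak convergence in the energy spaces, equip $\Omega'$ with a suitable filtration and check the structural requirements of Definition~\ref{def:wea solm}, and finally pass to the limit $h,k\to0$ in the discrete identities of Lemma~\ref{lem:3.7b}. For the first stage, fix $\omega'$ in the full-measure set on which the bounds~\eqref{equ:mhk' h1}--\eqref{equ:Phk'} and the convergences of Proposition~\ref{pro:con} hold. By~\eqref{equ:mhk' h1}, $\{\vecm'_{h,k}\}$ is bounded in $\mH^1(D_T)$, hence along a subsequence converges weakly there and, by the compact embedding $\mH^1(D_T)\hookrightarrow\hookrightarrow\mL^2(D_T)$, strongly in $\mL^2(D_T)$; comparison with the strong convergence $\vecm'_{h,k}\to\vecm'$ in $C\big(0,T;\mH^{-1}(D)\big)$ identifies both limits as $\vecm'$, and the usual subsequence argument then gives convergence of the full sequence. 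This proves~(\ref{ite:the1}); together with~\eqref{mhk'Linf} it also yields $\vecm'_{h,k}\to\vecm'$ strongly in $\mL^2(D_T)$ with $\norm{\vecm'}{\mL^\infty(D_T)}\le c$, $\nabla\vecm'_{h,k}\rightharpoonup\nabla\vecm'$ and $\pt\vecm'_{h,k}\rightharpoonup\pt\vecm'$ weakly in $\mL^2(D_T)$, and, passing to the limit in~\eqref{equ:mhk'1}, $|\vecm'(t,\vecx)|=1$ a.e. Analogously,~\eqref{equ:Phk'} and the strong convergence $\vecP'_{h,k}\to\vecP'$ in $\mH^{-1}(\wtd D_T)$ give~(\ref{ite:the3}) with $\vecP'\in L^2(0,T;\mH(\curl;\wtd D))$, while the deterministic bound $\esssup_t\|\nabla\vecm'_{h,k}(t)\|_D^2\le c$ inherited from Lemma~\ref{lem:4.2} through the equality of laws passes to the limit by weak lower semicontinuity, yielding $\mE'\big(\esssup_t\|\nabla\vecm'(t)\|_D^2\big)<\infty$.

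Second, I would take $(\cF'_t)$ to be the usual augmentation of the filtration generated by $\{(\vecm'(s),\vecP'(s),W'(s)):s\le t\}$. Since $W_k\to W$ uniformly on $[0,T]$ $\mP$-a.s., the laws $\cL(W_k)$ converge weakly to $\cL(W)$, so $W'$ has the law of a one-dimensional Brownian motion; adaptedness and independence of its increments relative to $(\cF'_t)$ follow in the standard way from the $\cF_{t_j}$-measurability of the discrete iterates and the equality of laws in part~(1) of Proposition~\ref{pro:con}. Together with progressive measurability of $\vecm'$ and $\vecP'$ and the bounds of the first stage, this furnishes all items of Definition~\ref{def:wea solm} except the equations.

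Third, I would pass to the limit in~\eqref{InE:10b}--\eqref{wE:Phkb}. These hold $\mP$-a.s., and, as in the proof of Proposition~\ref{pro:con}, the Kuratowski theorem shows the quantities in them are Borel functionals of $(\vecm_{h,k},\vecP_{h,k}^+,W_k)$; hence part~(1) of Proposition~\ref{pro:con} transfers them verbatim to $(\vecm'_{h,k},\vecP'_{h,k},W_k')$, $\mP'$-a.s. Letting $h,k\to0$ along the subsequence of the first stage: the elliptic term satisfies $\mu\inpro{\nabla\vecm'_{h,k}}{\nabla(\vecm'_{h,k}\times\vecvarphi)}_{D_T}=\mu\inpro{\nabla\vecm'_{h,k}}{\vecm'_{h,k}\times\nabla\vecvarphi}_{D_T}$ (because $\nabla\vecm'_{h,k}\cdot(\nabla\vecm'_{h,k}\times\vecvarphi)=0$) and converges by weak-times-strong $\mL^2$ convergence; the terms $\lambda_2\inpro{\pt\vecm'_{h,k}}{\vecm'_{h,k}\times\vecvarphi}_{D_T}$ and $-\lambda_1\inpro{\vecm'_{h,k}\times\pt\vecm'_{h,k}}{\vecm'_{h,k}\times\vecvarphi}_{D_T}=\lambda_1\inpro{\pt\vecm'_{h,k}}{\vecm'_{h,k}\times(\vecm'_{h,k}\times\vecvarphi)}_{D_T}$ converge because $\pt\vecm'_{h,k}\rightharpoonup\pt\vecm'$ weakly in $\mL^2(D_T)$ while $\vecm'_{h,k}\times(\vecm'_{h,k}\times\vecvarphi)\to\vecm'\times(\vecm'\times\vecvarphi)$ strongly in $\mL^2(D_T)$; the terms built from $R_{h,k}$ and from the exponential factor $e^{-W_kG_h}$ converge on combining the weak $\mL^2$-convergence of $\nabla\vecm'_{h,k}$ with the strong $\mL^2$-convergence (and $\mL^\infty$-bound) of $\vecm'_{h,k}$, the approximation properties $\vecg_h=I_{\mV_h}\vecg\to\vecg$, $C_h\to C$, $G_h\to G$, and the uniform convergence of $\sin W_k',\cos W_k'$ (since $W_k'\to W'$ uniformly on $[0,T]$), the limiting form being read off from the definitions of $\wtd C$ in Lemma~\ref{lem:eC wea} and of $R(t,\cdot)$ in Lemma~\ref{lem:4.1}. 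Equation~\eqref{wE:Phkb} is linear in $\vecP'_{h,k}$, so its first and third terms pass to the limit by the weak $L^2(0,T;\mH(\curl;\wtd D))$-convergence, the coupling term uses $e^{W_kG_h}\vecm'_{h,k}\to e^{W'G}\vecm'$ strongly in $\mL^2(D_T)$ against the smooth $\nabla\times(\nabla\times\veczeta)$, and $\vecP_h^{(0)}=I_{\mY_h}\vecP_0\to\vecP_0$. Thus $(\vecm',\vecP')$ satisfies~\eqref{wE:vecP} and, after extending $\vecvarphi$ by density from $\CoE{C^\infty(D)}$ to $L^2\big(0,T;\mH^1(D)\big)$, equation~\eqref{InE:13}; hence it is a weak martingale solution in the sense of Definition~\ref{def:wea solm}, and Lemmas~\ref{lem:we mP},~\ref{lem:4.1} and~\ref{lem:equi} give that $\big(\Omega',\cF',(\cF'_t)_{t\in[0,T]},\mP',W',\vecM',\vecH'\big)$, with $\vecM'=e^{W'G}\vecm'$ and $\vecH'=\vecP'-\wtd\vecM'$, is a weak martingale solution of~\eqref{E:1.1}--\eqref{E:Maxwell2}, which proves~(\ref{ite:the2}).

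The hard part will be the passage to the limit in the gyroscopic term $\inpro{\vecm'_{h,k}\times\pt\vecm'_{h,k}}{\vecm'_{h,k}\times\vecvarphi}_{D_T}$, where $\pt\vecm'_{h,k}$ converges only weakly in $\mL^2(D_T)$: one must use the identity $\veca\times(\vecb\times\vecc)=(\veca\cdot\vecc)\vecb-(\veca\cdot\vecb)\vecc$ to recast it so that $\pt\vecm'_{h,k}$ is tested against $\vecm'_{h,k}\times(\vecm'_{h,k}\times\vecvarphi)$, which \emph{does} converge strongly in $\mL^2(D_T)$ — and this is precisely where the strong $\mL^2$-convergence with $\mL^\infty$-bound obtained from the compactness in Proposition~\ref{pro:con} is indispensable; the same care is needed for the quadratic-in-$\vecm'_{h,k}$ part of $R_{h,k}$. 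A secondary technical point is the Kuratowski/measurability argument required to carry the discrete equations — which involve $\pt\vecm_{h,k}$ and are not obviously continuous functionals on $C\big(0,T;\mH^{-1}(D)\big)$ — onto the new probability space via the equality of laws.
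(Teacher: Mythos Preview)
Your proposal is correct and follows essentially the same route as the paper's proof: upgrade the convergences of Proposition~\ref{pro:con} to weak convergence in the energy spaces via the uniform bounds and compact embeddings, transfer the discrete identities~\eqref{InE:10b}--\eqref{wE:Phkb} to the primed objects via equality of laws, and pass to the limit term by term using weak--strong pairings; the paper simply delegates most of these limit passages (including the gyroscopic and $R_{h,k}$ terms you flag as the hard part) to \cite[Theorem~6.8]{GoldysLeTran2016}, whereas you sketch them explicitly. Your second stage, constructing $(\cF'_t)$ and verifying that $W'$ is an $(\cF'_t)$-Wiener process, is not spelled out in the paper's proof at all (it is implicitly absorbed into the cited reference), so in that respect your outline is more complete than the paper's own argument.
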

 
%  \noindent
%  {\bf Proof of Theorem~\ref{the:mai}:}
\begin{proof}
By Proposition~\ref{pro:con} there exists a set $V\subset\Omega'$ such that 
$\mP'(V) = 1$,
\[
\vecm_{h,k}'(\omega')\goto\vecm'(\omega')\quad\text{strongly in}\quad C\big(0,T;\mH^{-1}(D)\big), 
\]
\[
\vecP_{h,k}'(\omega')\goto\vecP'(\omega')\quad\text{strongly in}\quad\mH^{-1}(\wtd D_T), 
\]
and~\eqref{equ:mhk' h1},~\eqref{equ:Phk'} hold for every $\omega'\in V$.  
In what follows, we work with a fixed $\omega'\in V$. 

The convergences of sequences $\{\vecm'_{h,k}(\omega')\}$  and $\{\vecP'_{h,k}(\omega')\}$ 
are obtained by using the same arguments as in~\cite[Theorem 6.8]{GoldysLeTran2016}.

In order to prove~\eqref{ite:the2}, by noting Lemma~\ref{lem:equi} 
we need to prove that $\vecm',\vecP'$ and $W'$ satisfy~\eqref{equ:m 1},~\eqref{InE:13} 
and~\eqref{wE:vecP}.

\underline{Prove that $\vecm'$ satisfies~\eqref{equ:m 1}:}
Since $\mH^1(D_T)$ is compactly embedded in $\mL^2(D_T)$, 
there exists a subsequence of $\{\vecm_{h,k}'(\omega')\}$ (still denoted by $\{\vecm_{h,k}'(\omega')\}$) 
such that 
\begin{equation}\label{equ:strconvem'}
 \vecm_{h,k}'(\omega')\goto\vecm'(\omega')\quad\text{strongly in}\quad \mL^2(D_T).
\end{equation}
Therefore~\eqref{equ:m 1} follows from~\eqref{equ:strconvem'} and~\eqref{equ:mhk'1}.

\underline{Prove that $\vecm',\vecP'$ satisfy~\eqref{InE:13} and~\eqref{wE:vecP}:}
From Lemma~\ref{lem:3.7b}
 , $\big(\vecm_{h,k},\vecP_{h,k}^+,W_k\big)$
 satisfies~\eqref{InE:10b}--\eqref{wE:Phkb} $\mP$-a.s.. 
 Therefore, it follows from the equality of laws in Proposition~\ref{pro:con} that 
 $\big(\vecm_{h,k}',\vecP_{h,k}',W_k'\big)$ satisfies the
following equations for all $\vecpsi\in
C_0^{\infty}\big((0,T);\C^{\infty}(D)\big)$ and $\veczeta \in
C^{\infty}_c([0, T),\CX)$, $\mP'$-a.s.
 \begin{align}\label{equ:mhk'}
 &-\lambda_1\inpro{\vecm_{h,k}'(\omega')\times\pa_t\vecm_{h,k}'(\omega')}
 {\vecm_{h,k}'(\omega')\times\vecpsi}_{D_T}
 +
 \lambda_2\inpro{\pa_t\vecm_{h,k}'(\omega')}
 {\vecm_{h,k}'(\omega')\times\vecpsi}_{D_T}\nn\\
 &+
 \mu\inpro{\nabla(\vecm_{h,k}'(\omega'))}
 {\nabla(\vecm_{h,k}'(\omega')\times\vecpsi)}_{D_T}
 +
 \inpro{R_{h,k}(\cdot,\vecm_{h,k}'(\omega'))}
 {\vecm_{h,k}'(\omega')\times\vecpsi}_{D_T}\nn\\
 &\quad\quad-\mu
 \inpro{e^{W'_kG_h}\vecP_{h,k}'(\omega')}{\vecm_{h,k}'(\omega')\times\vecpsi}_{D_T}
 = O(h+k),
 \end{align}
 and
\begin{align}\label{wE:Phk'}
\mu_0\inpro{\vecP_{h,k}'(\omega')}{\veczeta_t}_{\wtd D_T}
&-
\mu_0\inpro{\vecP_h^{(0)}}{\veczeta(0,\cdot)}_{\wtd D_T}
+
\inpro{\sig\nabla\times\vecP_{h,k}'(\omega')}{\nabla\times\veczeta}_{\wtd D_T}\nn\\
&-
\sig_D
\inpro{ e^{W'_kG_h}\vecm_{h,k}'(\omega')}{\nabla \times (\nabla\times\veczeta)}_{D_T}
=O(h+k).
\end{align}

It suffices now to use the same arguments as in~\cite[Theorem 6.8]{GoldysLeTran2016} to 
pass the limit in~\eqref{equ:mhk'} and~\eqref{wE:Phk'}. Indeed, from ~\cite[Theorem 6.8]{GoldysLeTran2016} 
there hold
\begin{align*}
\inpro{\vecm'(\omega')\times\pa_t\vecm'(\omega')}
 {\vecm'(\omega')\times\vecpsi}_{D_T}
 &\goto 
 \inpro{\vecm'(\omega')\times\pa_t\vecm'(\omega')}
 {\vecm'(\omega')\times\vecpsi}_{D_T}\\
 \inpro{\pa_t\vecm_{h,k}'(\omega')}
 {\vecm_{h,k}'(\omega')\times\vecpsi}_{D_T}
 &\goto
 \inpro{\pa_t\vecm'(\omega')}
 {\vecm'(\omega')\times\vecpsi}_{D_T}\\
 \inpro{\nabla(\vecm_{h,k}'(\omega'))}
 {\nabla(\vecm_{h,k}'(\omega')\times\vecpsi)}_{D_T}
 &\goto
 \inpro{\nabla(\vecm'(\omega'))}
 {\nabla(\vecm'(\omega')\times\vecpsi)}_{D_T}\\
 \inpro{R_{h,k}(\cdot,\vecm_{h,k}'(\omega'))}
 {\vecm_{h,k}'(\omega')\times\vecpsi}_{D_T}
 &\goto
 \inpro{R(\cdot,\vecm'(\omega'))}
 {\vecm'(\omega')\times\vecpsi}_{D_T}.
\end{align*}
To prove the convergence of the last term in~\eqref{equ:mhk'}, 
we use the triangle inequality, H\"older inequaliy,~\eqref{equ:boundP},~\eqref{def:eh} 
and~\eqref{equ:G8} to obtain
\begin{align*}
\cI
&:=
\bigl|
\inpro{e^{W'_kG_h}\vecP_{h,k}'(\omega')}{\vecm_{h,k}'(\omega')\times\vecpsi}_{D_T}
-
\inpro{e^{W'G}\vecP'(\omega')}{\vecm'(\omega')\times\vecpsi}_{D_T}
\bigr|\\
&\leq \quad
\bigl|
\inpro{e^{W'_kG_h}\vecP_{h,k}'(\omega')}{\bigl(\vecm_{h,k}'(\omega')-\vecm'(\omega')\bigr)\times\vecpsi}_{D_T}
\bigr|\\
&\quad+
\bigl|
\inpro{(e^{W_k'G_h}-e^{W_k'G})\vecP_{h,k}'(\omega')}{\vecm'(\omega')\times\vecpsi}_{D_T}
\bigr|\\
&\quad+
\bigl|
\inpro{(e^{W_k'G}-e^{W'G})\vecP_{h,k}'(\omega')}{\vecm'(\omega')\times\vecpsi}_{D_T}
\bigr|\\
&\quad+
\bigl|
\inpro{e^{W'G}\bigl(\vecP_{h,k}'(\omega')-\vecP'(\omega')\bigr)}{\vecm'(\omega')\times\vecpsi}_{D_T}
\bigr|\\
&\leq \quad
\norm{\vecP_{h,k}'(\omega')}{D_T} 
\norm{\vecm_{h,k}'(\omega')-\vecm'(\omega')}{D_T} 
\norm{\vecpsi}{\mL^{\infty}(D_T)}\\
&\quad+
c\norm{I_{\mV_h}(\vecg)-\vecg}{D}
\norm{\vecP_{h,k}'(\omega')}{D_T} 
\norm{\vecm'(\omega')}{\mL^{\infty}(D_T)}
\norm{\vecpsi}{\mL^{\infty}(D_T)}\\
&\quad+
c\norm{W_k(\omega')-W'(\omega')}{\mL^{\infty}([0,T])}
\norm{\vecP_{h,k}'(\omega')}{D_T} 
\norm{\vecm'(\omega')}{\mL^{\infty}(D_T)}
\norm{\vecpsi}{\mL^{\infty}(D_T)}\\
&\quad+
\bigl|
\inpro{\vecP_{h,k}'(\omega')-\vecP'(\omega')}{e^{-W'G}\bigl(\vecm'(\omega')\times\vecpsi\bigr)}_{D_T}
\bigr|\\
&\leq 
c \norm{\vecm_{h,k}'(\omega')-\vecm'(\omega')}{D_T}
+
c\norm{I_{\mV_h}(\vecg)-\vecg}{D}
+c\norm{W_k(\omega')-W'(\omega')}{\mL^{\infty}([0,T])}\\
&\quad+
\bigl|
\inpro{\vecP_{h,k}'(\omega')-\vecP'(\omega')}{e^{-W'G}\bigl(\vecm'(\omega')\times\vecpsi\bigr)}_{D_T}
\bigr|,
\end{align*}
here the last inequality is obtained by using~\eqref{equ:Phk'} and $|\vecm'(\omega')|=1$ a.e..

Hence, it follows from~\eqref{equ:strconvem'}, part (4) in Proposition~\ref{pro:con} 
and the weak convergence of $\{\vecP'_{h,k}(\omega')\}$ in $L^2(0,T;\mH(\curl;\wtd D))$ that 
\begin{equation*}
 \cI\goto 0 \quad \text{as } h,k\goto 0.
\end{equation*}
This implies that $\vecm',\vecP'$ satisfy~\eqref{InE:13}.

The convergence of~\eqref{wE:Phk'} can be proved in the same manner by noting that
$\{\vecP'_{h,k}(\omega')\}$ converges weakly in $L^2(0,T;\mH(\curl;\wtd D))$, 
completing the proof of the theorem.
\end{proof}
%------------------------------------------------------------------------
\section{Numerical experiment}\label{sec:num}
In order to carry out physically relevant experiments (see~\cite{GuoDing08}),
the initial fields $\vecM_0$, $\vecH_0$ must satisfy 
the following conditions
\begin{equation*}\label{E:Cond2}
\dive (\vecH_0+\wtd\vecM_0)=0\text{ in } \wtd D
\quad\text{and}\quad
(\vecH_0+\wtd\vecM_0)\cdot\vecn = 0\text{ on }\partial \wtd  D.
\end{equation*}
This can be achieved by taking
\begin{equation*}
\vecH_0 =\vecH_0^* -  \chi_D\vecM_0,
\end{equation*}
where $\dive \vecH_0^* = 0$ in $\wtd D$. 
In our experiment, for simplicity, we choose
$\vecH_0^*$ to be a constant.
We solve an academic example with $D=\wtd D=(0,1)^3$ and 
\begin{align*}
\vecM_0(\vecx)
&=
\begin{cases}
(0,0,-1), &\quad |\vecx^*|\geq \frac{1}{2},\\
(2\vecx^*A,A^2-|\vecx^*|^2)/(A^2+|\vecx^*|^2),
&\quad |\vecx^*|\leq \frac{1}{2},
\end{cases}\\
\vecH_0^*(\vecx)
&=
(0,0,H_s),\quad\vecx\in \wtd D,
\end{align*}
where $\vecx=(x_1,x_2,x_3)$, $\vecx^*=(x_1-0.5,x_2-0.5,0)$ and $A=(1-2|\vecx^*|)^4/4$.
The constant $H_s$ represents the strength of $\vecH_0$
in the  $x_3$-direction. We carried out the experiments for
$H_s=30$.
We set the values for the other parameters
in~\eqref{wE:1.1} and~\eqref{wE:Maxwell2} as
$\lam_1=\lam_2=\mu_0=\sig=1$.

For each time step $k$, we generate a discrete  Brownian path by:
\[
W_k(t_{j+1})-W_k(t_{j})\sim \cN(0,k)\quad \text{for all
}j=0,\cdots,J-1.
\]
An approximation of any expected value is computed as
the average of $L$
discrete Brownian paths. In our experiments, we choose  $L=400$. 

At each iteration we solve two linear systems of sizes $2N\times2N$ and $M\times M$,
recalling that $N$ is the number of vertices and $M$ is
the number of edges in the triangulation. The code is
written in Fortran90.
The parameter $\theta$ in Algorithm~\ref{Algo:1} is chosen
to be $0.7$.

In the first set of experiments, to observe
convergence of the method,
we solve with $T=1$, $h=1/n$ where $n=2,\ldots,7$,
and different time steps $k=h$, $k=h/2$, and $k=h/4$. 
For each value of $h$,
the domain $D$ is partitioned into uniform cubes of
size $h$. Each cube is then partitioned into
six tetrahedra. 
Noting that
\[
E_{h,k}^2 :=
\int_{D_T}\left|1-|\vecm_{h,k}^-|\right|^2\dvx\dt
=
\norm{|\vecm|-|\vecm_{h,k}^-|}{D_T}^2
\le
\norm{\vecm-\vecm_{h,k}^-}{D_T}^2,
\]
we compute and plotte in
Figure~\ref{fig:error} the error $\E[E_{h,k}^2]$ for different values of $h$ and
$k$. 
\begin{figure}
\begin{center}
\includegraphics[width=15cm,height=9cm]{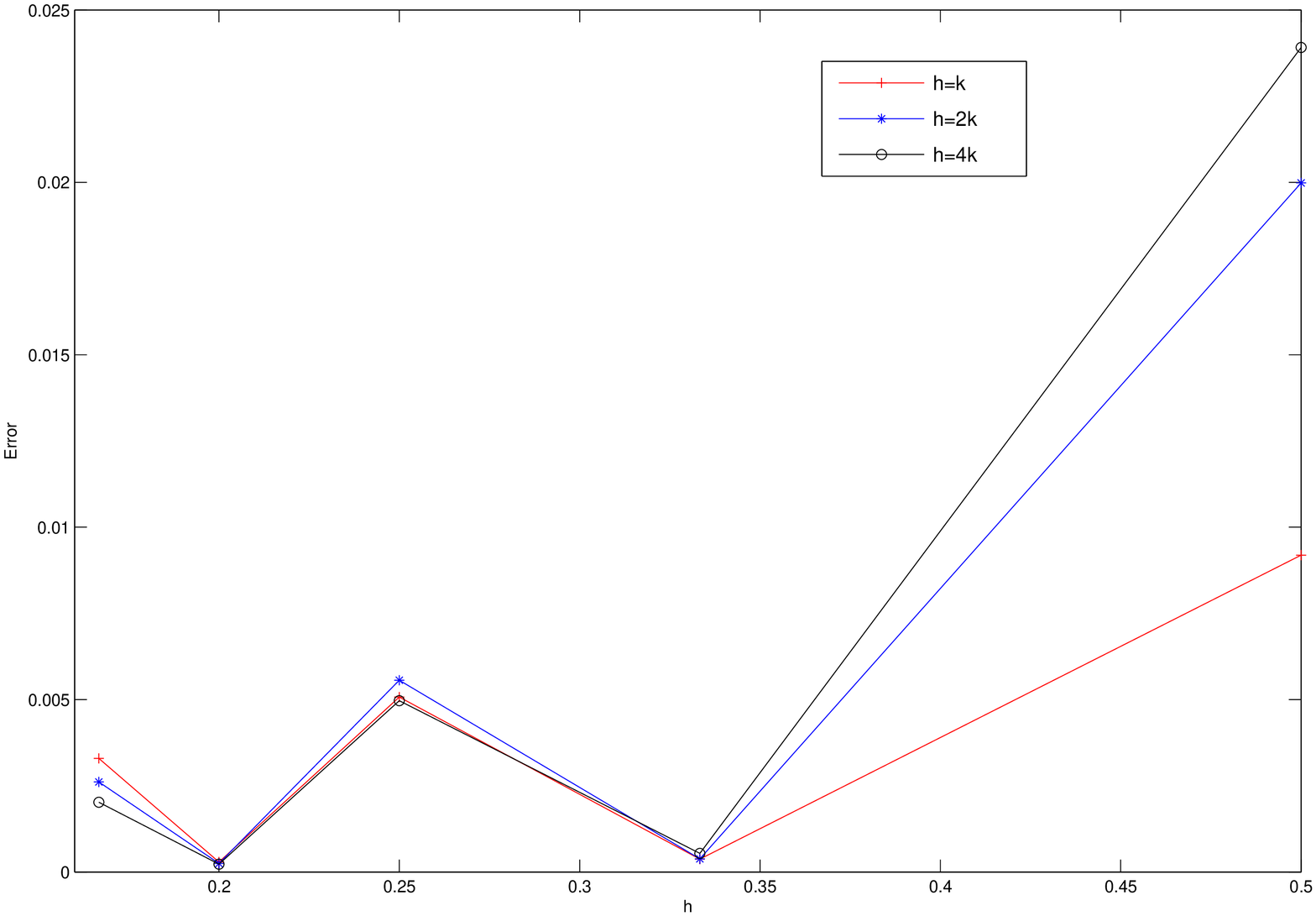}
\caption{Plot of error $\E[E_{h,k}^2]$}
\label{fig:error}
\end{center}
\end{figure}

In the second set of experiments to observe boundedness of
discrete energies, we solve the problem with fixed values
of $h=1/7$ and $k=1/20$. 
We
plot~$t\mapsto\|\nabla\vecm_{h,k}(t)\|_{D}^2$ in Figure~\ref{Fig:Eex} and 
 $t\mapsto\|\vecP_{h,k}(t)\|_{\wtd D}^2$ in Figure \ref{Fig:Eh} for three
individual paths and the expectations which seems to suggest that these
energies are bounded when $t\goto\infty$. 
Figure~\ref{fig:Etotal} shows that the total energy
$\cE(t):=\|\nabla\vecm_{h,k}(t)\|_{D}^2+\|\vecP_{h,k}(t)\|_{\wtd D}$ is
bounded as in Lemma~\ref{lem:4.2}.
\begin{figure}
\begin{center}
\includegraphics[width=15cm,height=9cm]{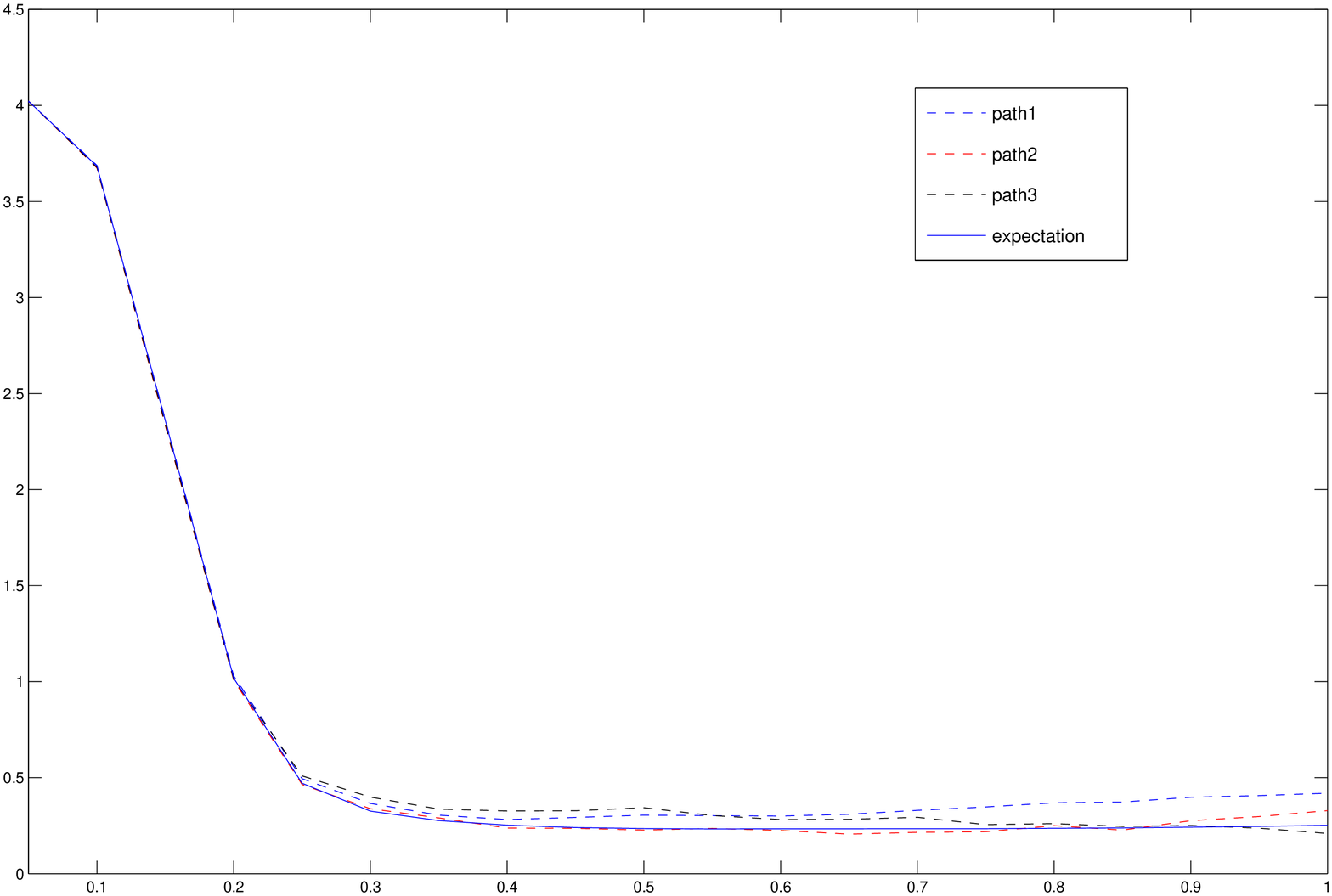}
\caption{Plot of $t\mapsto\|\nabla\vecM_{h,k}(t)\|_D$, expectation and three
individual paths}\label{Fig:Eex}
\end{center}
\end{figure}
\begin{figure}
\begin{center}
\includegraphics[width=15cm,height=9cm]{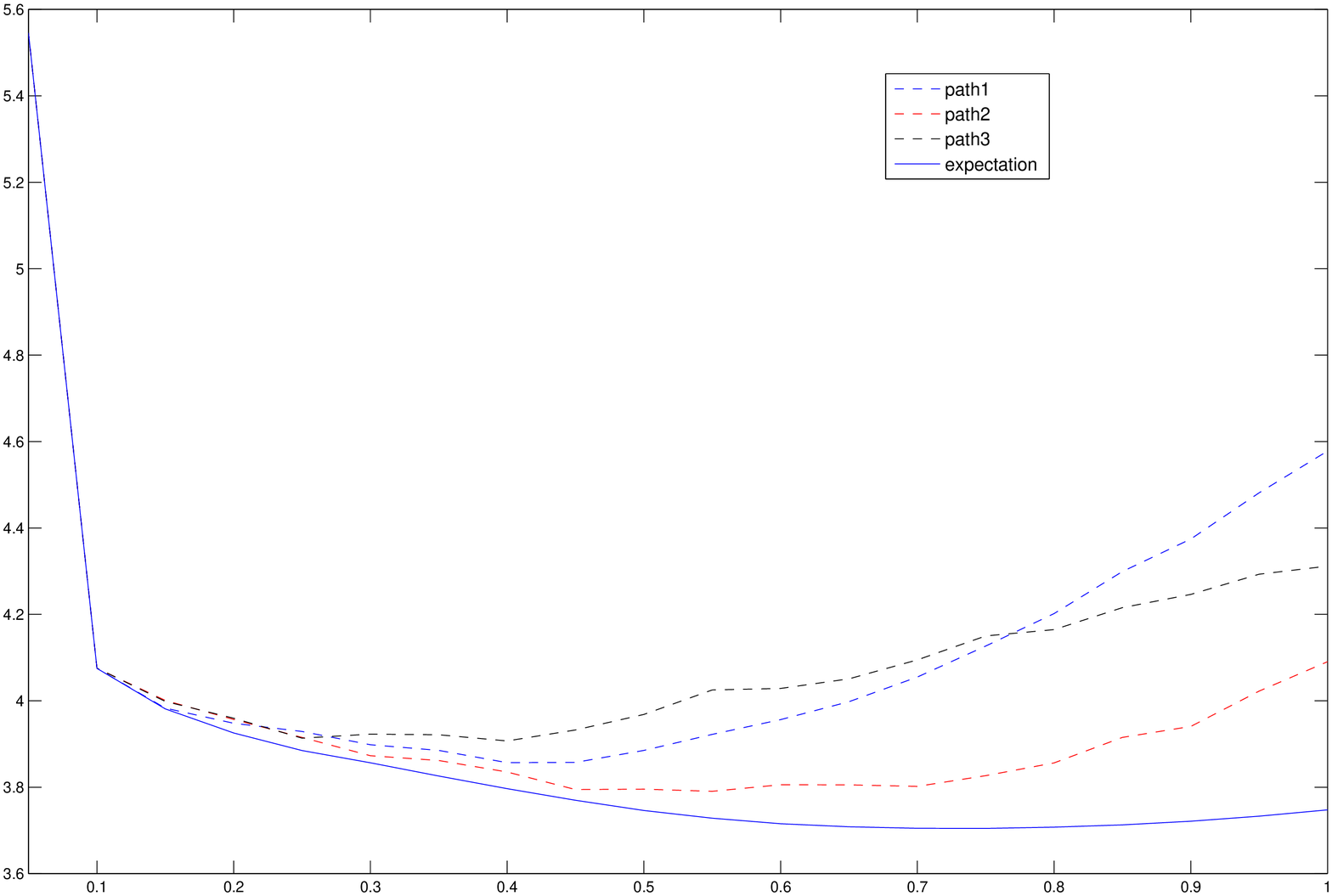}
\caption{Plot of $t\mapsto\|\vecP_{h,k}(t)\|_{\wtd D}$, expectation and three
individual paths}\label{Fig:Eh}
\end{center}
\end{figure}
\begin{figure}
\begin{center}
\includegraphics[width=15cm,height=9cm]{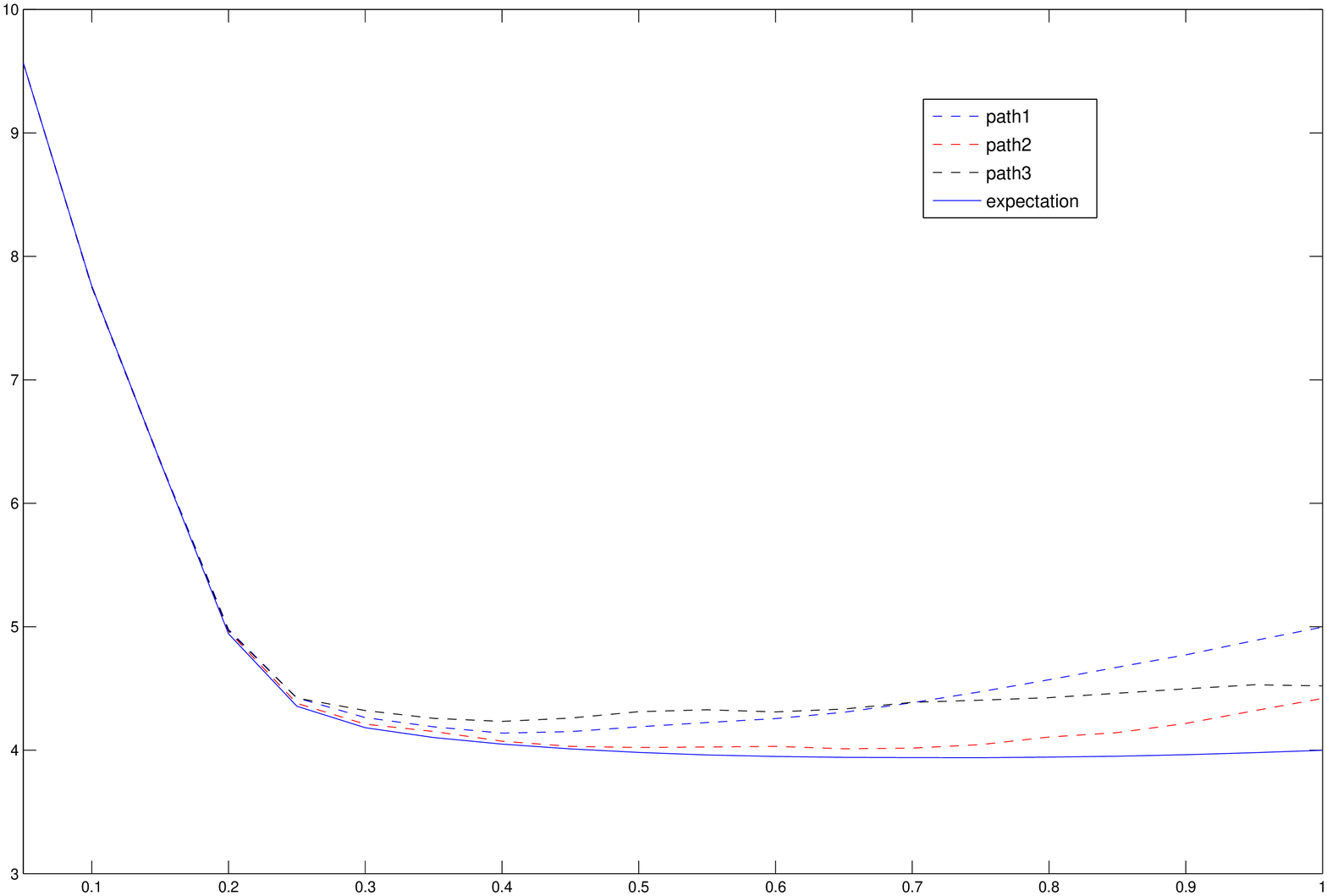}
\caption{Plot of $t\mapsto\cE(t)$, expectation and three individual
paths}\label{fig:Etotal}
\end{center}
\end{figure}
%------------------------------------------------------------------------
%--------------------------------------------------
\section{Appendix}\label{sec:app}
For the reader's convenience we will recall the following lemmas proved in~\cite{GoldysLeTran2016}.
\begin{lemma}\label{lem:4.0}
For any real constants $\lambda_1$ and $\lambda_2$ with
$\lambda_1\not=0$, if $\vecpsi, \veczeta\in\R^3$
satisfy $|\veczeta|=1$, then there exists
$\vecvarphi\in\R^3$ satisfying
\begin{equation}\label{equ:app}
\lambda_1\vecvarphi
+
\lambda_2\vecvarphi\times\veczeta
=\vecpsi.
\end{equation}
As a consequence, if 
$\veczeta\in H^1\big((0,T);\mH^1(D)\big)$ with $|\veczeta(t,x)|=1$ a.e. in
$D_T$ and $\vecpsi\in L^2\big((0,T);W^{1,\infty}(D)\big)$, then 
$\vecvarphi\in L^2\big((0,T);\mH^1(D)\big)$.
\end{lemma}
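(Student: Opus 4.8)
The plan is to solve \eqref{equ:app} first as a purely algebraic identity in $\R^3$, pointwise, and then to transfer the space--time regularity by reading it off from an explicit solution formula. So, fixing $\veczeta\in\R^3$ with $|\veczeta|=1$, I would consider the linear map $L_{\veczeta}\colon\R^3\to\R^3$, $L_{\veczeta}\vecvarphi=\lambda_1\vecvarphi+\lambda_2\,\vecvarphi\times\veczeta$. It suffices to show $L_{\veczeta}$ is injective, since then it is bijective (being linear on a finite-dimensional space): if $L_{\veczeta}\vecvarphi=\bzr$, then taking the dot product with $\vecvarphi$ and using $(\vecvarphi\times\veczeta)\cdot\vecvarphi=0$ from \eqref{equ:ele ide} gives $\lambda_1|\vecvarphi|^2=0$, hence $\vecvarphi=\bzr$ because $\lambda_1\neq0$. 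Therefore \eqref{equ:app} has a unique solution $\vecvarphi$ for every $\vecpsi$.

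To make the consequence transparent I would compute $\vecvarphi$ explicitly. Dotting \eqref{equ:app} with $\veczeta$ gives $\vecvarphi\cdot\veczeta=\lambda_1^{-1}(\vecpsi\cdot\veczeta)$; crossing \eqref{equ:app} on the right with $\veczeta$ and using $(\vecvarphi\times\veczeta)\times\veczeta=(\vecvarphi\cdot\veczeta)\veczeta-\vecvarphi$ (which follows from \eqref{equ:abc} and $|\veczeta|=1$), then substituting $\lambda_2\,\vecvarphi\times\veczeta=\vecpsi-\lambda_1\vecvarphi$ back in and inserting the value of $\vecvarphi\cdot\veczeta$, one arrives at
\begin{equation*}
\vecvarphi
=
\frac{1}{\lambda_1^2+\lambda_2^2}
\left(\lambda_1\vecpsi+\frac{\lambda_2^2}{\lambda_1}\,(\vecpsi\cdot\veczeta)\,\veczeta-\lambda_2\,\vecpsi\times\veczeta\right).
\end{equation*}
(The degenerate case $\lambda_2=0$ reduces to $\vecvarphi=\lambda_1^{-1}\vecpsi$ and is consistent with this expression.) A direct substitution back into \eqref{equ:app} confirms the formula.

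For the ``consequence'' part I would apply this formula pointwise in $t$, with $\vecpsi(t,\cdot)\in\mW^{1,\infty}(D)$ and $\veczeta(t,\cdot)\in\mH^1(D)$, $|\veczeta(t,\cdot)|=1$ a.e. The term $\vecpsi$ lies in $L^2(0,T;\mW^{1,\infty}(D))\hookrightarrow L^2(0,T;\mH^1(D))$ because $D$ is bounded. For the other two terms the crucial point is that $|\veczeta|=1$ makes $\veczeta\in\mL^\infty(D_T)$, so each Cartesian component of $(\vecpsi\cdot\veczeta)\veczeta$ and of $\vecpsi\times\veczeta$ is a finite sum of products of components of $\vecpsi$ (bounded and Lipschitz) with one or two components of $\veczeta$ (bounded and in $H^1(D)$); by the Leibniz rule every such product lies in $H^1(D)$, with $\mH^1(D)$-norm bounded by $c\,\norm{\vecpsi(t,\cdot)}{\mW^{1,\infty}(D)}\big(1+\norm{\nabla\veczeta(t,\cdot)}{D}\big)$. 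Squaring, integrating over $(0,T)$, and using $\vecpsi\in L^2(0,T;\mW^{1,\infty}(D))$ together with $\veczeta\in H^1(0,T;\mH^1(D))\hookrightarrow L^\infty(0,T;\mH^1(D))$ (so that $\esssup_{t}\norm{\nabla\veczeta(t,\cdot)}{D}<\infty$), I conclude $\vecvarphi\in L^2(0,T;\mH^1(D))$; measurability in $t$ is inherited from that of $\vecpsi$ and $\veczeta$.

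The whole argument is elementary. The only step needing a little care is the last one: to keep the products $\vecpsi\times\veczeta$ and $(\vecpsi\cdot\veczeta)\veczeta$ inside $\mH^1$ in the spatial variable one must genuinely use the pointwise constraint $|\veczeta|=1$ (i.e.\ $\veczeta\in\mL^\infty$), since an $\mH^1$ bound on $\veczeta$ alone would not control a product of three $H^1$ factors in three dimensions. Beyond this bookkeeping I do not expect any real obstacle.
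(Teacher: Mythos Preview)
Your argument is correct. Note, however, that the paper does not actually supply a proof of this lemma: it is stated in the Appendix with the remark that it is ``proved in~\cite{GoldysLeTran2016}'', so there is no in-text proof to compare against. Your route---showing injectivity of $L_{\veczeta}$ via $\inpro{L_{\veczeta}\vecvarphi}{\vecvarphi}=\lambda_1|\vecvarphi|^2$, writing down the explicit inverse, and then reading off the $L^2(0,T;\mH^1(D))$ regularity from the formula using the Leibniz rule together with $\veczeta\in\mL^\infty$ (from $|\veczeta|=1$) and the embedding $H^1(0,T;\mH^1(D))\hookrightarrow C([0,T];\mH^1(D))$---is the natural one and almost certainly coincides with the argument in the cited reference.
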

%------------------------------------------------------------
%--------------------
\begin{lemma}\label{lem:Ih vh}
For any $\vecv\in\C(D)$, $\vecv_h\in\mV_h$ and
$\vecpsi\in\C_0^\infty(D_T)$ there hold
\begin{align*}
\norm{I_{\mV_h}\vecv}{\mL^{\infty}(D)}
&\le
\norm{\vecv}{\mL^{\infty}(D)}, \\
\norm{\vecm_{h,k}^-\times\vecpsi
-
I_{\mV_h}(\vecm_{h,k}^-\times\vecpsi)}{\mL([0,T],\mH^1(D))}^2
&\le
ch^2
\norm{\vecm_{h,k}^-}{\mL([0,T],\mH^1(D))}^2
\norm{\vecpsi}{\mW^{2,\infty}(D_T)}^2,
\end{align*}
where $\vecm_{h,k}^-$ is  defined in Defintion~\ref{def:mhk}
\end{lemma}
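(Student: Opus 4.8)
The plan is to handle the two estimates separately: the first is a pointwise stability property of the nodal interpolant $I_{\mV_h}$, and the second is an elementwise interpolation error bound that exploits the fact that $\vecm_{h,k}^-$ is affine on every tetrahedron of $\mT_h|_D$.

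\textbf{First inequality.} I would use that the nodal basis functions $\{\phi_n\}_{1\le n\le N}$ form a nonnegative partition of unity on $\bar D$, namely $\phi_n\ge 0$ and $\sum_{n=1}^N\phi_n(\vecx)=1$ for every $\vecx\in D$. Then for $\vecv\in\C(D)$ and every $\vecx\in D$ one has $|I_{\mV_h}\vecv(\vecx)|=\bigl|\sum_{n=1}^N\vecv(\vecx_n)\phi_n(\vecx)\bigr|\le\sum_{n=1}^N|\vecv(\vecx_n)|\,\phi_n(\vecx)\le\norm{\vecv}{\mL^\infty(D)}\sum_{n=1}^N\phi_n(\vecx)=\norm{\vecv}{\mL^\infty(D)}$, and taking the supremum over $\vecx\in D$ gives $\norm{I_{\mV_h}\vecv}{\mL^\infty(D)}\le\norm{\vecv}{\mL^\infty(D)}$.

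\textbf{Second inequality.} I would fix $t\in[0,T]$ and argue element by element. On each $K\in\mT_h|_D$ the function $\vecm_{h,k}^-(t,\cdot)$ is affine, hence $\vecw:=\vecm_{h,k}^-(t,\cdot)\times\vecpsi(t,\cdot)$ is smooth on $\bar K$, and the standard shape-regular interpolation estimate (see e.g.~\cite{Johnson87}) gives $\norm{\vecw-I_{\mV_h}\vecw}{\mH^1(K)}^2\le c\,h_K^2\,|\vecw|_{\mH^2(K)}^2$ with $c$ independent of $K$ and $h$. Differentiating the cross product twice and using $\partial_i\partial_j\vecm_{h,k}^-(t,\cdot)|_K=0$, only the first- and zeroth-order derivatives of $\vecm_{h,k}^-$ survive, so pointwise on $K$ one has $|\nabla^2\vecw|\le c\bigl(|\nabla\vecm_{h,k}^-(t,\cdot)|+|\vecm_{h,k}^-(t,\cdot)|\bigr)\,\norm{\vecpsi}{\mW^{2,\infty}(D_T)}$ and therefore $|\vecw|_{\mH^2(K)}^2\le c\,\norm{\vecpsi}{\mW^{2,\infty}(D_T)}^2\,\norm{\vecm_{h,k}^-(t,\cdot)}{\mH^1(K)}^2$. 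Summing over $K$ and using $h_K\le h$ yields $\norm{\vecw-I_{\mV_h}\vecw}{\mH^1(D)}^2\le c\,h^2\,\norm{\vecpsi}{\mW^{2,\infty}(D_T)}^2\,\norm{\vecm_{h,k}^-(t,\cdot)}{\mH^1(D)}^2$, and integrating in $t$ over $[0,T]$ gives the claimed bound.

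\textbf{Main obstacle.} The point I would be most careful about is precisely the use of the piecewise-affine structure of $\vecm_{h,k}^-$: without vanishing second derivatives on the element interiors the interpolation estimate would demand $\mH^2$-control of $\vecm_{h,k}^-$, which is not available, whereas here the elementwise Hessian of $\vecm_{h,k}^-\times\vecpsi$ involves only $\vecm_{h,k}^-$ and $\nabla\vecm_{h,k}^-$, both bounded in $\mL^2$ by Lemmas~\ref{lem:4.2} and~\ref{lem:3.2a}. One must also keep the interpolation constant uniform in $h$ and $K$, which is where the assumed shape-regularity of $\mT_h$ enters; the remaining estimates are routine.
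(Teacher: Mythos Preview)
The paper does not actually prove this lemma: it appears in the Appendix with the preamble ``For the reader's convenience we will recall the following lemmas proved in~\cite{GoldysLeTran2016},'' so there is no in-paper proof to compare against. Your argument is correct and is the standard one: the first estimate follows from the nonnegative partition-of-unity property of the hat functions, and the second from the elementwise Bramble--Hilbert/interpolation bound together with the key observation that $\vecm_{h,k}^-$ is affine on each $K$, so the Hessian of $\vecm_{h,k}^-\times\vecpsi$ involves only $\vecm_{h,k}^-$ and $\nabla\vecm_{h,k}^-$; this is almost certainly the argument given in the cited reference as well.
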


The next lemma defines a discrete $\mL^p$-norm in
$\mV_h$ which is equivalent to the usual $\mL^p$-norm.
\begin{lemma}\label{lem:nor equ}
There exist $h$-independent positive constants $C_1$ and
$C_2$ such that for all $p\in[1,\infty)$ and
$\vecu\in\mV_h$ there holds
\begin{equation*}
C_1\|\vecu\|^p_{\mL^p(\Omega)}
\leq
h^d
\sum_{n=1}^N |\vecu(\vecx_n)|^p
\leq
C_2\|\vecu\|^p_{\mL^p(\Omega)},
\end{equation*}
where $\Omega\subset\R^d$, d=1,2,3.
\end{lemma}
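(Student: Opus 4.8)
The plan is to prove the equivalence elementwise — by affine changes of variables onto a fixed reference simplex — and then assemble over the mesh; the argument is the same for $d=1,2,3$, so I describe it for the tetrahedrization $\mT_h|_D$ at hand. Let $\widehat K$ be a reference $d$-simplex with vertices $\widehat{\vecx}_0,\dots,\widehat{\vecx}_d$, and let $\widehat V$ be the finite-dimensional space of affine maps $\widehat{\vecu}\colon\widehat K\to\R^3$. The evaluation map $\widehat{\vecu}\mapsto\big(\widehat{\vecu}(\widehat{\vecx}_0),\dots,\widehat{\vecu}(\widehat{\vecx}_d)\big)$ is a linear isomorphism of $\widehat V$ onto $(\R^3)^{d+1}$, so both
\[
\widehat{\vecu}\mapsto\norm{\widehat{\vecu}}{\mL^p(\widehat K)}
\qquad\text{and}\qquad
\widehat{\vecu}\mapsto\Big(\sum_{i=0}^d|\widehat{\vecu}(\widehat{\vecx}_i)|^p\Big)^{1/p}
\]
are norms on $\widehat V$ (the first is definite because an affine map with vanishing $\mL^p(\widehat K)$-norm is identically zero). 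By equivalence of norms on a finite-dimensional space there are constants $\widehat C_1,\widehat C_2>0$, depending only on $d$ and $p$, with
\begin{equation*}
\widehat C_1\sum_{i=0}^d|\widehat{\vecu}(\widehat{\vecx}_i)|^p
\le
\int_{\widehat K}|\widehat{\vecu}|^p\,d\widehat{\vecx}
\le
\widehat C_2\sum_{i=0}^d|\widehat{\vecu}(\widehat{\vecx}_i)|^p
\qquad\forall\,\widehat{\vecu}\in\widehat V ;
\end{equation*}
here the right-hand bound holds with $\widehat C_2=|\widehat K|/(d+1)$ for every $p$, by Jensen's inequality applied to $|\widehat{\vecu}|=\big|\sum_i\widehat\lambda_i\,\widehat{\vecu}(\widehat{\vecx}_i)\big|$ in terms of the barycentric coordinates $\widehat\lambda_i$ (which satisfy $\widehat\lambda_i\ge0$, $\sum_i\widehat\lambda_i=1$ and $\int_{\widehat K}\widehat\lambda_i=|\widehat K|/(d+1)$).

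Next I would transport this to each element. For $K\in\mT_h|_D$ let $F_K\colon\widehat K\to K$ be the invertible affine map taking $\widehat{\vecx}_i$ to the vertices of $K$ and set $\widehat{\vecu}:=\vecu\circ F_K\in\widehat V$; vertex values are preserved and $\int_K|\vecu|^p\dvx=|\det DF_K|\int_{\widehat K}|\widehat{\vecu}|^p\,d\widehat{\vecx}$. Regularity of $\mT_h$ gives $h$-independent constants $0<c_\ast\le C_\ast$ with $c_\ast h^d\le|\det DF_K|\le C_\ast h^d$ for all $K$. Combining this with the reference-element equivalence and summing over $K\in\mT_h|_D$ yields
\begin{equation*}
c_\ast\widehat C_1\,h^d\sum_{K}\,\sum_{\vecx_n\in\overline{K}}|\vecu(\vecx_n)|^p
\le
\norm{\vecu}{\mL^p(D)}^p
\le
C_\ast\widehat C_2\,h^d\sum_{K}\,\sum_{\vecx_n\in\overline{K}}|\vecu(\vecx_n)|^p ,
\end{equation*}
where, since $\vecu$ is continuous, the double sum equals $\sum_{n=1}^N\nu_n|\vecu(\vecx_n)|^p$ with $\nu_n$ the number of tetrahedra of $\mT_h|_D$ having $\vecx_n$ as a vertex. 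Regularity of $\mT_h$ bounds $1\le\nu_n\le\kappa$ by an $h$-independent integer $\kappa$, so $\sum_{n=1}^N|\vecu(\vecx_n)|^p\le\sum_{K}\sum_{\vecx_n\in\overline{K}}|\vecu(\vecx_n)|^p\le\kappa\sum_{n=1}^N|\vecu(\vecx_n)|^p$. Substituting this into the displayed chain gives the claim with $C_1=1/(\kappa\,C_\ast\widehat C_2)$ and $C_2=1/(c_\ast\widehat C_1)$, both independent of $h$.

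The substantive point — and the only one — is the $h$-independence of the constants, which rests entirely on the regularity of $\mT_h$: it is used once to keep the Jacobians comparable to $h^d$ (i.e.\ to keep the simplices non-degenerate as $h\to0$) and once to bound the vertex valence $\kappa$. Everything else is routine: the reference-element step is equivalence of norms in finite dimensions, and the assembly is bookkeeping. Two remarks: (i) $C_1$ may in fact be taken independent of $p$, since it involves only the Jensen constant $\widehat C_2=|\widehat K|/(d+1)$ together with the purely geometric quantities $c_\ast,C_\ast,\kappa$; and (ii) for $p=2$ one can bypass the abstract norm-equivalence step by computing the local mass matrix $\big(\int_{\widehat K}\widehat\lambda_i\widehat\lambda_j\,d\widehat{\vecx}\big)_{i,j}$ explicitly and reading off its smallest and largest eigenvalues, but for general $p\in[1,\infty)$ the scaling argument above is the natural route.
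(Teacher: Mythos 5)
Your argument is correct, and it is the standard reference-element scaling proof; note that this paper never proves the lemma at all --- it is only recalled from \cite{GoldysLeTran2016} in the appendix --- so there is no in-paper proof to compare against, and your write-up (finite-dimensional norm equivalence on $\widehat K$, Jensen for the upper bound $\int_{\widehat K}|\widehat{\vecu}|^p\le\frac{|\widehat K|}{d+1}\sum_i|\widehat{\vecu}(\widehat{\vecx}_i)|^p$, affine transport, and the vertex-valence bookkeeping) is exactly the expected route. Two points are worth making explicit. First, the step $c_\ast h^d\le|\det DF_K|\le C_\ast h^d$ uses quasi-uniformity of $\mT_h$ (all element diameters comparable to the \emph{global} mesh size $h$), not merely shape regularity; this is not a defect of your proof but a hypothesis forced by the statement itself, since with the global factor $h^d$ the right-hand inequality fails on a mesh containing one very small element (take $\vecu$ to be a nodal hat function supported there). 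So ``regular tetrahedrization of maximal mesh-size $h$'' must be read in this stronger sense, as it is in the quoted source. Second, your constants: $C_1=1/(\kappa C_\ast\widehat C_2)$ is indeed $p$-uniform, as you remark, but $C_2=1/(c_\ast\widehat C_1)$ inherits the $p$-dependence of the reference-element equivalence constant, and this is unavoidable: in one dimension, on a single element $[0,h]$ with $u(x)=1-x/h$ one has $\|u\|_{\mL^p}^p=h/(p+1)$ while $h\sum_n|u(\vecx_n)|^p=h$, so any admissible $C_2$ must be at least $p+1$. Consequently the lemma, though phrased as ``there exist $h$-independent constants \dots for all $p$'', can only be true with $C_1,C_2$ independent of $h$ and $\vecu$ but depending on $p$ and $d$, which is precisely what your argument delivers; for the fixed values of $p$ actually used in the analysis this is all that is needed.
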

%----------------------------------------------------------
\section*{Acknowledgements}
The authors acknowledge financial support through the ARC projects DP160101755 and DP120101886.
%-----------------------------------------------------------
% \bibliographystyle{myabbrv}
% \bibliography{mybib}

\end{document}